\newtheorem{example}{Example}[section]
\newtheorem{remark}{Remark}%
\newtheorem{corollary}{Corollary}[section]
\newtheorem{proposition}{Proposition}
\newtheorem{theorem}{Theorem}
\newtheorem{assumption}{Assumption}
\newtheorem{definition}{Definition}%
\DeclareRobustCommand{\RR}{\mathbb R}
\DeclareRobustCommand{\setS}{\mathbb S}
\DeclareRobustCommand{\setX}{\mathbb X}
\DeclareRobustCommand{\setZ}{\mathbb Z}
\DeclareRobustCommand{\setB}{\mathbb B}
\DeclareRobustCommand{\setA}{\mathbb A}
\DeclareRobustCommand{\setN}{\mathbb N}
\DeclareRobustCommand{\EE}{\mathbb E}
\newtheorem{lemma}{Lemma}[section]
\newcommand{\dist}{\operatorname{dist}}
\newcommand{\dom}{\operatorname{dom}}
\newcommand{\proj}{\operatorname{Proj}}
\newcommand{\interior}{{\mbox{\rm int}}}
\begin{document}

\title{ New Classes of Non-monotone Variational Inequality Problems Solvable via Proximal Gradient on Smooth Gap Functions }

%\title{An Augmented Lagrangian Decomposition Method for Non-monotone Constrained Variational Inequality Problems}
%[Augmented Lagrangian Decomposition Method for Non-monotone Constrained VIP]

\author{
Lei Zhao\thanks {Institute of Translational Medicine and National Center for Translational Medicine, Shanghai
Jiao Tong University, 200030 Shanghai, China ({\tt l.zhao@sjtu.edu.cn})}
\and
Daoli Zhu\thanks {Antai College of Economics and Management, Shanghai Jiao Tong University, 200030 Shanghai, China
({\tt dlzhu@sjtu.edu.cn}); and School of Data Science, The Chinese University of Hong Kong, Shenzhen, Shenzhen 518172, China}
\and
Shuzhong Zhang\thanks {Department of Industrial and Systems Engineering, University of Minnesota, Minneapolis, MN 55455, USA
({\tt zhangs@umn.edu})}}

\maketitle

\begin{abstract}

In this paper, we study the local linear convergence behavior of proximal-gradient (PG) descent algorithm on a parameterized gap-function reformulation of a smooth but non-monotone variational inequality problem (VIP). The aim is to solve the non-monotone VI problem without assuming the existence of a Minty-type solution. 
We first introduce and study various error bound conditions for the gap functions in relation to the VI model. 
In particular, we show that uniform type error bounds imply level-set type error bounds for composite optimization,  
revealing a key hierarchical structure there. 
As a result, local linear convergence is established under some easy-verifiable conditions induced by level-set error bounds, the gradient Lipschitz condition and a suitable initialization condition. 
Furthermore, for non-monotone affine VIs we present a homotopy continuation scheme that achieves global convergence 
by dynamically tracing a solution path. Our numerical experiments show  the efficacy of the proposed approach, leading to the solutions of a broad class of non-monotone VI problems resulting from the need to compute Nash equilibria, traffic controls, and the GAN models. 

\end{abstract}

\vspace{0.5cm}

{\bf Keywords:} variational inequality problem, non-monotone mapping, gap function, proximal gradient, error bounds.

{\bf MSC Classification:} 90C33, 65K15, 
%{\color{red} 90C26,} 
%90C46, 49K35, 
90C26.

\section{Introduction}\label{intro}
Let $F$ be a continuous mapping from $\RR^d$ to $\RR^d$, and $\setX$ be a closed convex subset of $\RR^d$.
% with vertex at the origin, that is, $\alpha\mathbf{C}+\beta\mathbf{C}\subseteq \mathbf{C}$, for $\alpha,\beta\geq 0$.
%With the above setting in mind, 
In this paper, we consider the following generic variational inequality problem:
\begin{equation}\label{Prob:VIP}
\begin{aligned}
\mbox{{\rm(VIP)}}\qquad&\mbox{Find $x^*\in\setX$ such that}\\
&\langle F(x^*),x-x^*\rangle\geq0,\quad\forall x\in\setX.
\end{aligned}
\end{equation}

In particular, in this paper we focus on the case where $F$ is smooth but possibly non-monotone. Therefore, the above (VIP) is a non-monotone variational inequality problem. We denote $\setX_{\text{\tiny VIP}}^*$ to be the set of all solutions
to (VIP), and we shall assume henceforth $\setX_{\text{\tiny VIP}}^*\neq\emptyset$. 

Problem (VIP) arises naturally in diverse areas such as game theory~\cite{ZZZ24},  traffic assignment~\cite{ND84}, and generative adversarial networks~\cite{GBVVL18}. 
Because of that, 
iterative methods have recently been proposed 
aiming to find solutions for non-monotone variational inequality problems. To guarantee these proposed methods to work as desired,  
often times one needs to make assumptions such as 
a Minty solution or a weak Minty solution exists. As we will get into specifics shortly, this amounts to assuming that 
the set $\setX_{\text{\tiny MVIP}
}^*$ is nonempty for Minty Variational Inequality (MVI), or $\setX_{\text{\tiny WMVIP}}^*\neq\emptyset$ for weak Minty Variational Inequality (weak MVI); see e.g.~Definition~\ref{defi:mvi_wmvi} to be introduced later, which is a basic assumption in~\cite{M20, DDJ21, B22}. However, %even with 
as shown by simple examples such as Examples~\ref{exp:nvip_1} and~\ref{exp:nvip_2}, the conditions of $\setX_{\text{\tiny MVIP}}^*\neq\emptyset$ or $\setX_{\text{\tiny WMVIP}}^*\neq\emptyset$ may not be met. There is an alternative approach to solve non-monotone VI problems, known as the gap function approach. This paper is to study the conditions under which the gap function approach is guaranteed to solve classes of non-monotone VI problems without resorting to any Minty-type conditions. 
%Therefore, designing algorithms based on gap functions becomes a possible choice.

Historically, the so-called {\it smooth gap function}\/ was first introduced by Fukushima in~\cite{F92}, where the gap function $g_{\lambda}(x):\RR^d\rightarrow[0,+\infty)$ of (VIP) is defined as:
\begin{equation}\label{eq:SGF_1}
g_{\lambda}(x) := \max_{y\in\setX}\, \left[ \langle F(x),x-y\rangle-\frac{1}{2\lambda}\|x-y\|^2\right] ,
\end{equation}
or equivalently 
\begin{equation}\label{eq:SGF_2}
g_{\lambda}(x) = \max_{y\in\RR^d}\, \left[ \langle F(x),x-y\rangle+\mathcal{I}_{\setX}(x)-\mathcal{I}_{\setX}(y)-\frac{1}{2\lambda}\|x-y\|^2\right] ,
\end{equation}
where $\mathcal{I}_{\setX}(\cdot)$ is the indicator function of set $\setX$, and $\lambda>0$ is a fixed parameter. By resorting to minimizing $g_{\lambda}(x)$, (VIP) can be equivalently transformed to the following smooth optimization problem:
\begin{equation}\label{Prob:OP}
\begin{aligned}
\mbox{{\rm(OP-$g_{\lambda}$)}}\qquad\qquad\qquad\qquad\quad&\min_{x\in\setX} g_{\lambda}(x).\\
\mbox{(Or, equivalently,} 
& \min_{x\in\RR^d}\psi(x)=g_{\lambda}(x)+\mathcal{I}_{\setX}(x))
\end{aligned}
\end{equation}
Note that (OP-$g_{\lambda}$) is a smooth non-convex optimization problem.  
%with nonnegative objective function ($g_{\lambda}(x)\geq0$) and zero optimal value. 
Moreover, $g_{\lambda}(x)\ge 0$ for all $x\in\setX$, and $g_{\lambda}(x)= 0$ 
$\Longleftrightarrow
x\in \setX_{\text{\tiny VIP}}^*$. A natural idea thus arises: Instead of solving (VIP), how about solving simple-looking optimization problem (OP-$g_{\lambda}$)? This is known as the gap function approach to solve (VIP). The only issue with the approach is that we will have to find a {\it global}\/ solution for (OP-$g_{\lambda}$) in order to actually find a solution for (VIP). If we apply a regular first-order method to solve (OP-$g_{\lambda}$), then we can only expect to find a {\it stationary}\/ point of (OP-$g_{\lambda}$). The question then becomes: Under what conditions can we be assured that the solution found is actually a global minimizer? If so, can we further be assured that the local rate of convergence is linear? These are the questions that we set out to answer in the paper. 

Let $\setX_{g_{\lambda}}^*$ and $\overline{\setX}_{g_{\lambda}}$ be solution set and the critical point set of (OP-$g_{\lambda}$) respectively. Since the solution set of (VIP) $\setX_{\text{\tiny VIP}}^*$ is nonempty, we know that  $\setX_{g_{\lambda}}^*=\setX_{\text{\tiny VIP}}^*\neq\emptyset$. 
%has one solution, then $\setX^*\neq\emptyset$. %Let $\overline{\setX}$ the set of critical points of (OP).   the stationary point of (P1), w ∈ Rn × Rd × Rm,
The stationary point $\bar{x}\in\overline{\setX}_{g_{\lambda}}$ of (OP-$g_{\lambda}$) satisfies the following first order condition: $0\in\partial\psi(x)=\nabla g_{\lambda}(x)+\mathcal{N}_{\setX}(x)$ with $\mathcal{N}_{\setX}(x)$ be the normal cone of convex set $\setX$ at $x$. Let
\begin{equation}\label{eq:yx}
y_{\lambda}(x) := \arg\max_{y\in\setX}\, \left[ \langle F(x),x-y\rangle-\frac{1}{2\lambda}\|x-y\|^2\right] =\proj_{\setX}\left(x-\lambda F(x)\right).
\end{equation}
We can derive that $y_{\lambda}(x)$ is single-valued mapping, and the gap function $g_{\lambda}(x)$ is smooth on $\setX$, and its gradient is 
\begin{equation}\label{eq:grad_g}
\nabla g_{\lambda}(x)=F(x)+\nabla F(x)^{\top}\left[x-y_{\lambda}(x)\right]+\frac{1}{\lambda}\left[y_{\lambda}(x)-x\right].
\end{equation}
Now, consider applying the Proximal Gradient (PG) method
%(see Algorithm~\ref{alg:PGOPg}) 
to solve (OP-$g_{\lambda}$) as follows: 
\begin{algorithm}[h]
\caption{Proximal Gradient Method for Solving (OP-$g_{\lambda}$)}
{\bf Initialization:}  $x^0\in\setX$;
\begin{algorithmic}[1]
\For{$k=0,1,\ldots$}
\State Update  $x^{k+1} := \arg\min_{x\in\RR^d}\left[ \langle\nabla g_{\lambda}(x^k),x-x^k\rangle+\mathcal{I}_{\setX}(x)+\frac{1}{2\alpha}\|x-x^k\|^2\right]$.
\EndFor
\end{algorithmic}
\label{alg:PGOPg}
\end{algorithm}
\vspace{-0.5cm}

In this paper, we investigate the PG approach for solving (OP-$g_{\lambda}$) or its parameterized version. Our aim is to identify classes of VI problems on which %in such a way that 
the proposed algorithm is guaranteed to find a solution of the non-monotone VI that we set out to solve. 

\subsection{Non-monotone VI under Minty or weak Minty conditions} 

%\paragraph{Iterative methods for non-monotone VIP} 
Non-monotone VI problems and their applications constitute a substantial part of mathematical programming. Recent progresses have been made to solve some classes of non-monotone VI's under the premise that Minty-type conditions are satisfied.  
%To cope with the non-monotone VIP, the iterative methods are one type of (mostly) work in practice. 
%In recent research on iterative methods
For example, in~\cite{LRLY21} and~\cite{M20} the existence of a Minty solution (see (2) of Definition~\ref{defi:mvi_wmvi}) is assumed to guarantee the non-monotone VIP to be solvable. Additionally, the extra gradient method (EG) has also been studied recently as another iterative algorithm~\cite{DDJ21} and~\cite{B22}, where the authors studied a class of non-monotone VIP with the existence of a {\it weak Minty} %variational inequality has 
solution (see (2) of Definition~\ref{defi:mvi_wmvi}). 
%and observed that the EG showed good converge behavior in the experiments. 
Moreover, in~\cite{DDJ21} the authors commented that the weak MVI
%, which is weaker than the MVI, and also 
actually generalizes the concept of negative co-monotonicity~\cite{BMW21}. Specifically, for (VIP) these two notions are as follows: 
\begin{definition}\label{defi:mvi_wmvi}
\begin{itemize}
    \item[{\rm(1)}] {\rm(}{\bf Minty variational inequality solution (MVI)}{\rm)} Given an operator $F:\RR^d\rightarrow\RR^d$, there exists a point $x^*\in\setX$ such that $\langle F(x),x-x^*\rangle\geq0,\;\forall x\in\setX$.    
    \item[{\rm(2)}] {\rm(}{\bf Weak Minty variational inequality solution (weak MVI)}{\rm)} Given an operator $F:\RR^d\rightarrow\RR^d$, there exists a point $x^*\in\setX_{\text{\tiny VIP}}^*$ and $\rho>0$ such that $\langle F(x),x-x^*\rangle\geq-\frac{\rho}{2}\|F(x)\|^2,\;\forall x\in\RR^d$.
\end{itemize}
\end{definition}
%However, even with simple examples like the following two, the conditions (the solution set is nonempty for MVI or weak MVI) may not be satisfied; see the following two examples.
It is clear that (MVI) or weak (MVI) may not have solutions even though (VIP) has a solution.

\begin{example}\label{exp:nvip_1} 
%Consider the following nonmontone VIP: 
%\begin{equation}\label{VIP_2}
%\begin{aligned}
%5&\mbox{Find $x^*\in\setX$ such that}\\
%&\langle F(x^*),x-x^*\rangle\geq0,\quad\forall %x\in\setX,
%\end{aligned}
%\end{equation}
%where 
{\rm Consider 
$F(x)=-x$ and $\setX=\{x\in\RR^d :\: -1\leq x\leq 1\}$. It is easy to check that the solution for the problem is $\setX_{\text{\tiny VIP}}^*=\{-1,1,0\}$, to be denoted as $x_1^*=-1$, $x_2^*=1$, and $x_3^*=0$.
Letting $x_1=1$ and $x_2=0$, we have $\langle F(x_1)-F(x_2),x_1-x_2\rangle=-1<0$. Thus, $F(x)$ is non-monotone. Next, we show that the solution set of MVI for this problem, which is always a subset of $\setX_{\text{\tiny VIP}}^*$, is actually empty. %Letting $x_1=1$, 
To see this, observe $\langle F(x_1),x_1-x_1^*\rangle=-2<0$ for $x_1=1$;
%Letting $x_2=-1$, we have 
$\langle F(x_2),x_2-x_2^*\rangle=-2<0$ for $x_2=-1$; 
%Letting $x_3=1$, we have 
$\langle F(x_3),x_3-x_3^*\rangle=-1<0$ for $x_3=1$.

Let us compute the gap function for this problem. Taking $\lambda=1$, the gap function %of~\eqref{VIP_2} (Figure~\ref{fig:0}) 
is
\[
g_{1}(x)=-\min_y\langle F(x),y-x\rangle+\frac{1}{2}\|y-x\|^2=\left\{\begin{array}{cl}
\frac{1}{2}x^2,&x\in[-\frac{1}{2},\frac{1}{2}];  \\
-\frac{3}{2}x^2-2x-\frac{1}{2},&x\in[-1,-\frac{1}{2} ) ; \\
-\frac{3}{2}x^2+2x-\frac{1}{2},&x\in(\frac{1}{2},1],
\end{array}\right.
\]
which is weakly convex on $[-1,1]$, the solution set (OP-$g_{1}$) is $\setX_{g_1}^*=\setX_{\text{\tiny VIP}}^*=\{-1,1,0\}$, the critical point set $\overline{\setX}_{g_{1}}=\{-\frac{2}{3}, \frac{2}{3}, 0\}\neq\setX_{\text{\tiny VIP}}^*$. The gradient of the gap function (the right plot in Figure~\ref{fig:01}) is
\[
\nabla g_{1}(x)=\left\{\begin{array}{cl}
x, &x\in[-\frac{1}{2},\frac{1}{2}];  \\
-3x-2, &x\in[-1,-\frac{1}{2}); \\
-3x+2, &x\in(\frac{1}{2},1].
\end{array}\right.
\]

\begin{figure}
\centering
\begin{subfigure}[t]{0.45\textwidth}
\begin{center}
\scriptsize
		\tikzstyle{format}=[rectangle,draw,thin,fill=white]
		\tikzstyle{test}=[diamond,aspect=2,draw,thin]
		\tikzstyle{point}=[coordinate,on grid,]
\begin{tikzpicture}[>=stealth]
 \draw[->](-2,0)--(2,0)node[below]{$x$};
 \draw[->](0,-1.5)--(0,1.5)node[right]{$g_{\lambda}(x)$};
 \node at(-0.2,-0.2){$O$};
 %\draw (-2,1)--(2,1);
 \draw [domain=-0.5:0.5,red] plot(\x,0.5*\x*\x)node[above,xshift=5]{$g_{\lambda}(x)=\frac{1}{2}x^2$};
 \draw [domain=-1:-0.5,blue] plot(\x,-1.5*\x*\x-2*\x-0.5)node[left, yshift=10]{$g_{\lambda}(x)=-\frac{3}{2}x^2-2x-\frac{1}{2}$};
  \draw [domain=0.5:1,blue] plot(\x,-1.5*\x*\x+2*\x-0.5)node[right, xshift=-25, yshift=-13]{$g_{\lambda}(x)=-\frac{3}{2}x^2+2x-\frac{1}{2}$};
\end{tikzpicture}
%\caption{$g_{\lambda}(x)$ of Example~\ref{exp:nvip_1}}\label{fig:0}
\end{center}
\end{subfigure}
\begin{subfigure}[t]{0.45\textwidth}
\begin{center}
\scriptsize
		\tikzstyle{format}=[rectangle,draw,thin,fill=white]
		\tikzstyle{test}=[diamond,aspect=2,draw,thin]
		\tikzstyle{point}=[coordinate,on grid,]
\begin{tikzpicture}[>=stealth]
 \draw[->](-2,0)--(2,0)node[below]{$x$};
 \draw[->](0,-1.5)--(0,1.5)node[right]{$\nabla g_{\lambda}$};
 \node at(-0.2,-0.2){$O$};
 %\draw (-2,1)--(2,1);
 \draw [domain=-0.5:0.5,red] plot(\x,\x)node[above]{$\nabla g_{\lambda}=x$};
 \draw [domain=-1:-0.5,blue] plot(\x,-3*\x-2)node[left]{$\nabla g_{\lambda}=-3x-2$};
  \draw [domain=0.5:1,blue] plot(\x,-3*\x+2)node[right]{$\nabla g_{\lambda}=-3x+2$};
\end{tikzpicture}
%\caption{$\nabla g_{\lambda}(x)$ of Example~\ref{exp:nvip_1}}\label{fig:01}
\end{center}
 \end{subfigure}
\caption{$g_{\lambda}(x)$ and $\nabla g_{\lambda}(x)$ of Example~\ref{exp:nvip_1}}\label{fig:01}
\end{figure}
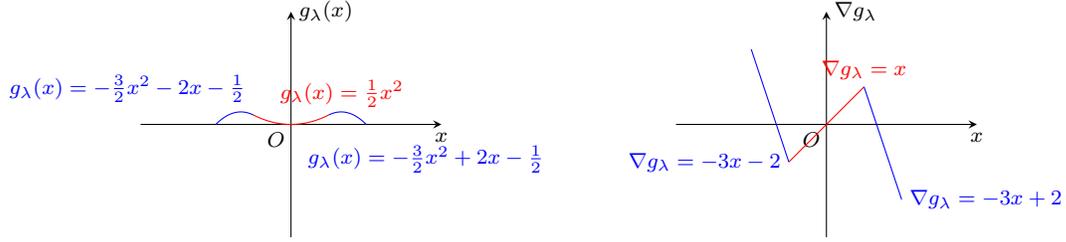
}
\end{example}

\begin{example}\label{exp:nvip_2} 
{\rm 
%Let us consider the following non-monotone VIP:
%\begin{equation}\label{VIP_3}
%\begin{aligned}
%&\mbox{Find $z^*\in\mathbb{Z}$ such that}\\
%&\langle F(z^*),z-z^*\rangle\geq0,\quad\forall %z\in\mathbb{Z},
%\end{aligned}
%\end{equation}
%where 
$F(x)=(2x_1x_2,-x_1^2)^{\top}$ and $
%\mathbb{Z}
\setX =\RR\times(-\infty,0]$. It is easy to check that the solution set is $%\mathbb{Z}^*
\setX_{\text{\tiny VIP}}^* 
=\{(0,x_2)^{\top} : x_2 \in(-\infty,0]\}\cup\{(x_1,0)^{\top} : x_1 \in(-\infty,0)\}$.
First, 
for $x%=(x_1,x_1)^{\top}
=(0,0)^{\top}$ and $x'=%(x_2,y_2)^{\top}
(1,-1)^{\top}$, we have $\langle F(x)-F(x'), x-x'\rangle=\langle (0,0)-(-2,-1), (0,0)-(1,-1)\rangle=-1<0$. Thus, $F(z)$ is non-monotone. 
%Next, we will show that the solution set of MVI of this problem is empty. For any $z=(x,y)^{\top}\in\RR\times(-\infty,0)$ and $z^*=(0,y^*)\in\mathbb{Z}^*$, $\langle F(z), (x,y)^{\top}-(0,y^*)^{\top}\rangle=2x^2y-x^2(y-y^*)=x^2(y+y^*)<0$ by $y<0$ and $y^*\leq0$. 
%
Next, we %are going to 
show that the solution set of weak MVI is empty. Let us show this by contradiction. Suppose that there is $\rho>0$ such that $\langle F(x), (x_1,x_2)^{\top}-(0,x_2^*)^{\top}\rangle=x_1^2(x_2+x_2^*)\geq-\frac{\rho}{2}\|F(x)\|^2=-\frac{\rho}{2}x_1^2(4x_2^2+x_1^2)$, $\forall x=(x_1,x_2)^{\top}\in\RR\times(-\infty,0]$ and $\forall x^*=(0,x_2^*)\in\mathbb{X}^*$. If $x_1\neq0$, then $x_1^2(4x_2^2+x_1^2)>0$ and $\delta=\frac{x_2+x_2^*}{4x_2^2+x_1^2}\geq-\frac{\rho}{2}$. However, if $(x_1,x_2)\in(0,\infty)\times(-\infty,0)$ and $x_1\rightarrow0$ and $x_2\rightarrow0$, then $\delta\rightarrow-\infty$, yielding a contradiction with $\Delta\geq-\frac{\rho}{2}$.

Furthermore, suppose that there is $\rho'>0$ such that $\langle F(x), (x_1,x_2)^{\top}-(x_1^*,0)^{\top}\rangle=x_1x_2(x_1-2x_1^*)\geq-\frac{\rho'}{2}\|F(x)\|^2=-\frac{\rho'}{2}x_1^2(4x_2^2+x_1^2)$, $\forall x=(x_1,x_2)^{\top}\in\RR\times(-\infty,0]$ and $\forall x^*=(x_1^*,0)\in\mathbb{X}^*$. If $x_1\neq0$, then $x_1^2(4x_2^2+x_1^2)>0$ and $\delta=\frac{x_2(x_1-2x_1^*)}{x_1(4x_2^2+x_1^2)}\geq-\frac{\rho'}{2}$. However, if $(x_1,x_2)\in(0,\infty)\times(-\infty,0)$ and $x_1\rightarrow0$, then $\delta\rightarrow-\infty$, yielding a contradiction with $\Delta\geq-\frac{\rho'}{2}$. 
Therefore, the solution set of weak MVI is empty.
}
\end{example}

Motivated by these examples, in this paper we shall discuss the proximal gradient method based on the gap function for non-monotone VIP without requiring the MVI or weak MVI conditions.

\subsection{Existing results on the gap functions}

\paragraph{Smooth gap functions and proximal gradient method based on gap function.}
A number of gap functions have been proposed in the literature; see e.g.~\cite{F92, KF98, LP07}. In~\cite{ZM98}, the authors used a smooth gap function to obtain a descent direction and designed the descent method for strongly monotone VIP. This result was extended to monotone VIP in~\cite{ZM93}. In~\cite{LP07}, the authors introduced a new penalty function for the problem of minimizing a twice continuously differentiable function by using the regularized gap function for the variational inequalities. Under certain assumptions, they showed that any stationary point of the new penalty function satisfies the first-order optimality condition of the original constrained minimization problem, and any local (or global) minimizer of the new penalty function is a locally (or globally) optimal solution of the original optimization problem.
%Among them, the natural residual function and smooth gap function are particularly popular gap functions.

%{\rm (1)} Natrual residual function $\gamma_{\lambda}:\RR^d\rightarrow\RR$,
%\begin{equation}\label{eq:NRF}
%\gamma_{\lambda}(x)=\left\|x-\proj_{\setX}\left(x-\lambda F(x)\right)\right\|,
%\end{equation}
%where $\proj_{\setS}(\cdot)$ is a projection on the convex set $\setS$ and $\lambda>0$ is arbitrary.
\paragraph{Uniform error bounds and level-set error bounds.} The authors of~\cite{ZDLZ21} introduced the local level-set subdifferential error bound and the level-set subdifferential error bound on a level-set. They showed that the local level-set subdifferential error bound is weaker than the Kurdyka-{\/L}ojasiewicz
property and can replace the latter to establish linear convergence for various first-order algorithms. Additionally, they also established the relationships among various local error bounds. Moreover, they showed that the level-set subdifferential error bound on a level-set can lead to a linear convergence to the optimal solution of an additive composite optimization. Recently,~\cite{WLLL23} studied the behavior of the uniform level-set subdifferential error bound via Moreau envelopes
under suitable assumptions. The authors established the relationship among the uniform level-set subdifferential error bound, the uniform Kurdyka-{\/L}ojasiewicz
property, and the uniform H\"older error bound.
Similarly, \cite{KNS16} established the relationships among a series of global error bounds including the Polyak-{\/L}ojasiewicz inequality (may also be viewed as the global Kurdyka-{\/L}ojasiewicz property), global quadratic growth, and so on. Recently,~\cite{LDZ23} establish the relationships among error bounds on a level-set.

\subsection{Main contributions and outline of the paper}

In summary, the main contributions of this paper are as follows:

{\rm(i)} {\bf Theory of Error Bounds.} We establish a fundamental connection between uniform error bounds and level-set error bounds for composite optimization, proving that the former implies the latter - revealing a new hierarchical relationship in the error bound theory that were previously unrecognized.

{\rm(ii)} {\bf Convergence Conditions for PG on Gap Function.} We establish linear convergence of proximal gradient algorithms for gap function minimization in variational inequalities under three key requirements: (i) level-set error bounds, (ii) gradient Lipschitz continuity, and (iii) appropriate initial solution selection. Furthermore, we characterize some fundamental VI 
%variational inequality 
properties that guarantee conditions (i) and (ii) to hold, thus providing verifiable sufficient conditions on VI for convergence. 
%of the proximal gradient approach on the gap functions.

{\rm(iii)} {\bf Initialization-Free Homotopy Method.} For affine variational inequalities, we develop a homotopy continuation method that relaxes the requirement for carefully selected initial points while maintaining global convergence - our method automatically identifies appropriate initialization through path following, overcoming a major computational bottleneck.

%\sout{
%{\rm(i)} We develop the theory of level-set error bounds and discover the relationships with uniform error bounds.

%{\rm(ii)} For non-monotone \sout{sub-analytic} VIP, we provide that the smooth gap function $g_{\lambda}$ satisfies level-set proximal error bound ($\ell$-PEB) whenever solution set $\setX_{\text{\tiny VIP}}^*$ of VIP is compact.

%{\rm(iii)} We develop the proximal gradient-method-based gap function $g_{\lambda}$ to solve non-monotone VIP and linear convergence is also provided.}

The remainder of this paper is organized in the following way. In Section~\ref{sec:PG-OP}, we provide some necessary preliminaries for our analysis. Specifically, Subsection~\ref{sec:pre} is devoted to the weak convexity and subdifferential for a function. In Subsection~\ref{subsec:composite_op}, we introduce the distance-like mappings and functions for composite optimization, and derive their properties. In Subsections~\ref{sec:converge_pg}, we analyze the convergence and linear convergence to the solution of the Proximal Gradient method (PG) for composite optimization. In Subsection~\ref{sec:eb}, we study the behavior of the uniform error bounds and the level-set error bounds. We establish the relationship between the uniform error bounds and the level-set error bounds. Next, in Section~\ref{sec:gap_prop},  we derive the linear convergence to zero of the gap function of VIP. Section~\ref{sec:linear-VIP} identifies a new class of VIP that can be solved via the Proximal Gradient method on its gap function. Finally, in Section~\ref{sec:NE} we provide some  numerical results and experiments on the algorithms proposed in this paper.

\section{Composite Optimization, Proximal Gradient Methods, and Error Bound Conditions}\label{sec:PG-OP}
In order to eventually solve (OP-$g_{\lambda}$), in this section we take a D-tour to conduct a thorough study on the proximal gradient approach to solve non-convex composite optimization models. 
%In this section, 
We shall investigate under what conditions  the proximal gradient algorithm will converge linearly -- at least locally -- to an optimal solution of a general non-convex composite optimization model. 
%with known optimum. 
With this in mind, let us start with some preparations. 

\subsection{Weak convexity}\label{sec:pre}

A function $\varphi:\RR^d\rightarrow\RR\cup\{\infty\}$ is $\rho$-weakly convex if
$\psi = \varphi  + \frac{\rho}{2}\|\cdot\|^2$ is convex (\cite{Vial83}). The class of weakly convex functions includes convex function, gradient Lipschitz functions, and many other classes of functions in modern machine learning applications.

For this function class, gradient may not always exist. We define the Fr\'echet subdifferential which is standard in variational analysis~\cite{Rockafellar}.
\begin{definition}
Let $\psi:\RR^d\rightarrow\RR\cup\{\infty\}$ be a proper l.s.c.\ function. 
\begin{itemize}
\item [{\rm(1)}] {\rm(}{\bf Fr\'echet subdifferential}{\rm)} For each $\bar{x}\in\dom\psi$, the Fr\'echet subdifferential of $\psi$ at $\bar{x}$, denoted as $\partial_F\psi(\bar{x})$, is the set of vectors $\xi\in\RR^d$, which satisfy
\[
\lim_{x\neq\bar{x}, x\rightarrow\bar{x}}\inf\frac{1}{\|x-\bar{x}\|}[\psi(x)-\psi(\bar{x})-\langle\xi,x-\bar{x}\rangle\geq0].
\]
If $x\notin\dom\psi$, then $\partial_F\psi(x)=\emptyset$.
\item [{\rm(2)}] {\rm(}{\bf Limiting subdifferential}{\rm)} The limiting-subdifferential, or simply the subdifferential for short, of $\psi$ at $\bar{x}\in\dom\psi$, denoted as $\partial_L{\psi}(\bar{x})$, is defined as follows:
\[
\partial_L\psi(\bar{x}):=\{\xi\in\RR^d:\exists x_d\rightarrow\bar{x}, \psi(x_d)\rightarrow\psi(\bar{x}), \xi_d\in\partial_F\psi(x_d)\rightarrow\xi\}.
\]
\item [{\rm(3)}] {\rm(}{\bf Proximal subdifferential}{\rm)} The proximal subdifferential of $\psi$ at $\bar{x}\in\dom\psi$ written $\partial_P\psi(\bar{x})$, is defined as follows:
\[
\partial_P\psi(\bar{x}):=\{\xi\in\RR^d:\exists\rho, \eta>0, \mbox{ such that } \psi(x)\geq\psi(\bar{x})+\langle\xi,x-\bar{x}\rangle-\rho\|x-\bar{x}\|^2, \forall x\in\setB(\bar{x},\eta) \},
\]
where $\setB(\bar{x},\eta)$ is the open ball of radius $\eta>0$, centered at $\bar{x}$.
\end{itemize}
\end{definition}
Note that for a proper l.s.c.\ weakly convex function $\psi$, the Fr\'echet subdifferential $\partial_F(\bar{x})$ at $\bar{x}$ coincides with the limiting subdifferential $\partial_L(\bar{x})$ and proximal subdifferential $\partial_P(\bar{x})$ at $\bar{x}$ (cf.~\cite{ZDLZ21}). Moreover, the weakly convex function is proximal regular.
Throughout this paper, we simply denote $\partial\psi(\bar{x})=\partial_F\psi(\bar{x})$.
%For such a $\rho$-weakly convex function $\varphi$, its subdifferential is given by \cite[Proposition 4.6]{Vial83}
%\[
%\partial\varphi(x)=\partial \psi(x)-\rho x,
%\]
%where $\partial \psi$ is the usually convex subdifferential of $\psi$.
It is also well known that the $\rho$-weak convexity of $\varphi$ is equivalent to (cf.~\cite[Proposition 4.8]{Vial83}): 
\begin{equation}\label{eq:wcvx inequality}
\varphi(y)\geq\varphi(x)+\langle\nu,y-x\rangle-\frac{\rho}{2}\|x-y\|^2
\end{equation}
 for all $x,y\in \RR^d$ and $\nu \in \partial \varphi (x)$.
Furthermore, we note that the $L$-Lipschitz gradient property of $f$ implies that $f(x)\geq f(y)+\langle s,x-y\rangle-\frac{L}{2}\|x-y\|^2$, $\forall x,y\in\RR^d$ with $s\in\partial f(x)$ (see~\cite{nesterov03}, Lemma 1.2.3). This, together with~\cite{Vial83}, establishes that $f$ is $L$-weakly convex.

\subsection{Composite optimization}\label{subsec:composite_op}
We consider the following general non-convex composite optimization problem:
%with known optimum $\phi^*$:
\begin{equation}\label{eq:opt}
\mbox{\rm(OP)}\quad\min_{x\in\RR^d}\phi(x)=f(x)+r(x),
\end{equation}
where $f:\RR^d\rightarrow(-\infty,\infty]$ is a proper lower semi-continuous (l.s.c.) function that is smooth in $\dom f$, and $r:\RR^d\rightarrow(-\infty,\infty]$ is a proper l.s.c.\ function. Let $\setX_{\text{\tiny OP}}^*$ and $\overline{\setX}_{\text{\tiny OP}}$ be the solution set and the critical point set of (OP) respectively. Furthermore, assume that $\setX_{\text{\tiny OP}}^*\neq\emptyset$ and $\phi(x)$ is bounded below. Let $\phi^*=\phi(x^*)$, $\forall x^*\in\setX_{\text{\tiny OP}}^*$. The stationary point of (OP) is assumed to satisfy the following first-order optimality condition: $0\in\partial\phi(x)=\nabla g_{\lambda}(x)+\partial r(x)$. 
If (OP) satisfies the following standard assumption, then (OP) is known as {\it weakly convex composite optimization}.
\begin{assumption}\label{assump1}
$\nabla f$ is $L$-Lipschitz and $r$ is $\rho$-weakly convex.
\end{assumption}

Below, let us introduce some distance-like mappings and functions, which will be crucial in our linear convergence analysis for 
%the PG algorithm 
proximal gradient 
in Subsection~\ref{sec:converge_pg}.
\iffalse
\begin{theorem}[Convergence analysis of PG method for problem (OP)]\label{theo:convergence} Suppose that Assumption~\ref{assump1} hold and the stepsize $\alpha\in(0,\min\{1/L,1/\rho\})$. Let $\{x^k\}$ be generated by the PG method. Then, the following assertions hold:\\
{\rm(i)} $\dist(0,\partial\phi(x^{k+1}))\leq(L+\frac{1}{\alpha})\|x^k-x^{k+1}\|$.

{\rm(ii)} $\phi(x^k)\geq\phi(x^{k+1})+\frac{1}{2}(\frac{2}{\alpha}-L-\rho)\|x^k-x^{k+1}\|^2$.

{\rm(iii)} The sequence $\{\phi(x^k)\}$ is strictly decreasing (unless $x^k\in\overline{\setX}_{\text{\tiny OP}}$ for some $k$).

{\rm(iv)} The sequence $\{\phi(x^k)\}$ is lower bounded and $\lim\limits_{k\rightarrow\infty}\phi(x^k)=\bar{\phi}$.

{\rm(v)} $\lim\limits_{k\rightarrow\infty}\dist(0,\partial\phi(x^{k+1}))=0$.

{\rm(vi)} $\min\limits_{1\leq j\leq k}\dist(0,\partial\phi(x^{j+1}))=o(\sqrt{1/k})$.
\end{theorem}
\fi
%\subsection{Distance-like mappings and functions}\label{subsec:Variable Bregman}
%In this subsection, we introduce the following distance-like mappings and functions.

{\bf Proximal envelope function:} Let $E_{\alpha}$ be defined as
\begin{equation}
E_{\alpha}(x):=\min_{y\in\RR^d}
\left[ f(x)+\langle\nabla f(x),y-x\rangle+r(y)+\frac{1}{2\alpha}\|x-y\|^2\right] \;\forall x\in\RR^d,
\end{equation}
which is the value function of optimization problem in 
%the PG method (
Algorithm~\ref{alg:PGOP}, replacing $x^k$ by $x$.

{\bf Bregman proximal mapping:} Let $T_{\alpha}$ be defined as
\begin{equation}\label{defi:Tk}
T_{\alpha}(x):=\arg\min_{y\in\RR^d} \left[ \langle\nabla f(x),y-x\rangle+r(y)+\frac{1}{2\alpha}\|x-y\|^2\right] \;\forall x\in\RR^d,
\end{equation}
which is the set of optimizers of optimization problem in 
%the PG method (
Algorithm~\ref{alg:PGOP}. By Assumption~\ref{assump1} and $\alpha\in(0,\frac{1}{\rho})$, $T_{\alpha}(x)$ is non-empty and a point-to-point mapping. 

{\bf Bregman proximal gap function:} Let $G_{\alpha}$ be defined as
\begin{equation}
G_{\alpha}(x):=-\frac{1}{\alpha}\min_{y\in\RR^d}
\left[ \langle\nabla f(x),y-x\rangle+r(y)-r(x)+\frac{1}{2\alpha}\|x-y\|^2\right] \;\forall x\in\RR^d.
\end{equation}

Obviously, we have $G_{\alpha}(x)\geq0$ for all $x$. Since $r$ is weakly-convex, the following optimization problem is equivalent to the differential inclusion problem $0\in\partial\phi(x)$ associated with problem (OP) (see Proposition~\ref{prop}):
\[
\min_{x\in\RR^d}G_{\alpha}(x).
\]

Since the above mappings/functions are well structured, they satisfy some 
%favorable 
useful properties that are summarized in Proposition~\ref{prop} and Lemma~\ref{lemma:1}, 
%. The proofs of the subsequent proposition and lemma are 
whose proofs will be 
relegated to Appendices~\ref{app:prop} and~\ref{app:lem}.
%%%%%%%%BASIC PROPERTIES OF BREGMAN FUNCTION and EXISTENCE RESULTS
%%%%%%%%%%%%%%%%%%%%%%%%%%%%%%%%%%%%%%%%

\begin{proposition}\label{prop} {\bf (Properties of mappings and functions)}
Suppose that Assumption~\ref{assump1} holds. Choose $\alpha\in(0,\min\{1/L,1/\rho\})$. Then, for any $x\in\RR^d$ we have: 
\begin{itemize}
\item[{\rm(i)}] $T_{\alpha}(x)$ is a point-to-point mapping;
\item[{\rm (ii)}] $E_{\alpha}(x)=\phi(x)-\alpha G_{\alpha}(x)$;
\item[{\rm (iii)}] $\phi\left(T_{\alpha}(x)\right)\leq\phi(x)-\frac{1}{2}\left(\frac{2}{\alpha}-L-\rho\right)\|x-T_{\alpha}(x)\|^2$;
\item[{\rm (iv)}] $\frac{1-\alpha\rho}{2\alpha^2}\|x-T_{\alpha}(x)\|^2\leq G_{\alpha}(x)$;
\item[{\rm(v)}] $G_{\alpha}(x)\leq\frac{1}{2(1-\alpha\rho)}\dist^2\left(0,\partial\phi(x)\right)$;
\item[{\rm(vi)}] $\|x-T_{\alpha}(x)\|\leq\left(\frac{\alpha}{1-\alpha\rho}\right)\dist\left(0,\partial\phi(x)\right)$;
\item[{\rm(vii)}] $\dist\left(0,\partial\phi(T_{\alpha}(x))\right)\leq\left(L+\frac{1}{\alpha}\right)\|x-T_{\alpha}(x)\|$.
\end{itemize}
\end{proposition}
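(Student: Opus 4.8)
The plan is to work throughout with the single inner objective
\[
h_x(y) := \langle\nabla f(x),y-x\rangle + r(y) + \frac{1}{2\alpha}\|x-y\|^2,
\]
so that $T_\alpha(x)=\arg\min_y h_x(y)$, while $E_\alpha(x)=f(x)+\min_y h_x(y)$ and, since $h_x(x)=r(x)$, $\alpha G_\alpha(x)=r(x)-\min_y h_x(y)=h_x(x)-\min_y h_x(y)$. The one structural fact driving everything, which I would record first, is that $h_x$ is strongly convex with modulus $\mu:=\frac{1}{\alpha}-\rho=\frac{1-\alpha\rho}{\alpha}>0$ (positive because $\alpha<1/\rho$): indeed $r+\frac{\rho}{2}\|\cdot\|^2$ is convex by $\rho$-weak convexity, and the prox term supplies the extra $(\frac{1}{2\alpha}-\frac{\rho}{2})\|\cdot\|^2$, the linear term being irrelevant to convexity. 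Strong convexity gives (i) at once: $h_x$ has a unique minimizer, so $T_\alpha$ is point-to-point. Claim (ii) is then pure bookkeeping from the identity $\alpha G_\alpha(x)=r(x)-\min_y h_x(y)$, since $\phi(x)-\alpha G_\alpha(x)=f(x)+\min_y h_x(y)=E_\alpha(x)$.

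For the descent estimate (iii), writing $x^+=T_\alpha(x)$, I would combine two ingredients. First, strong convexity with minimizer $x^+$ yields $h_x(x)\ge h_x(x^+)+\frac{\mu}{2}\|x-x^+\|^2$, which after cancelling $r(x)=h_x(x)$ rearranges into an upper bound on $\langle\nabla f(x),x^+-x\rangle+r(x^+)$. Second, the $L$-Lipschitz descent inequality $f(x^+)\le f(x)+\langle\nabla f(x),x^+-x\rangle+\frac{L}{2}\|x^+-x\|^2$ controls $f(x^+)$. Adding $r(x^+)$ and substituting, the coefficients of $\|x-x^+\|^2$ collect to exactly $\frac{L}{2}+\frac{\rho}{2}-\frac{1}{\alpha}=-\frac{1}{2}(\frac{2}{\alpha}-L-\rho)$, giving (iii). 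Claim (iv) is the same strong-convexity inequality read directly: $\alpha G_\alpha(x)=h_x(x)-h_x(x^+)\ge\frac{\mu}{2}\|x-x^+\|^2$, i.e. $G_\alpha(x)\ge\frac{1-\alpha\rho}{2\alpha^2}\|x-x^+\|^2$.

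The crux is (v), which I expect to be the main obstacle. The key observation is that the surrogate $\Psi:=h_x$ satisfies $\partial\Psi(x)=\nabla f(x)+\partial r(x)=\partial\phi(x)$, because the gradient of the prox term vanishes at $y=x$ and $f$ is $C^1$. Since $\Psi$ is $\mu$-strongly convex with minimizer $x^+$, for any $s\in\partial\Psi(x)$ the inequality $\Psi(x^+)\ge\Psi(x)+\langle s,x^+-x\rangle+\frac{\mu}{2}\|x^+-x\|^2$ rearranges to $\Psi(x)-\Psi(x^+)\le\langle s,x-x^+\rangle-\frac{\mu}{2}\|x-x^+\|^2$, and maximizing the right-hand side over the displacement gives the strongly-convex bound $\Psi(x)-\min\Psi\le\frac{1}{2\mu}\|s\|^2$. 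Taking the infimum over $s\in\partial\Psi(x)=\partial\phi(x)$ and recalling $\Psi(x)-\min\Psi=\alpha G_\alpha(x)$ yields $\alpha G_\alpha(x)\le\frac{1}{2\mu}\dist^2(0,\partial\phi(x))$, i.e. exactly (v).

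With (v) in hand, (vi) follows by chaining (iv) and (v) and taking square roots: $\frac{1-\alpha\rho}{2\alpha^2}\|x-x^+\|^2\le G_\alpha(x)\le\frac{1}{2(1-\alpha\rho)}\dist^2(0,\partial\phi(x))$ gives $\|x-x^+\|\le\frac{\alpha}{1-\alpha\rho}\dist(0,\partial\phi(x))$. Finally (vii) is a short computation from the optimality condition $0\in\nabla f(x)+\partial r(x^+)+\frac{1}{\alpha}(x^+-x)$: this exhibits a subgradient $v\in\partial r(x^+)$ with $v=-\nabla f(x)+\frac{1}{\alpha}(x-x^+)$, and then $\xi:=\nabla f(x^+)+v\in\partial\phi(x^+)$ satisfies $\xi=(\nabla f(x^+)-\nabla f(x))+\frac{1}{\alpha}(x-x^+)$, whence $\|\xi\|\le(L+\frac{1}{\alpha})\|x-x^+\|$ by $L$-Lipschitzness of $\nabla f$, bounding $\dist(0,\partial\phi(x^+))$. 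The subtlety worth flagging throughout is the subdifferential sum rule $\partial\phi=\nabla f+\partial r$ and the identification $\partial\Psi(x)=\partial\phi(x)$, both of which rest on $f$ being $C^1$ and on the coincidence of the Fréchet, limiting, and proximal subdifferentials for the weakly convex data recorded earlier in the excerpt.
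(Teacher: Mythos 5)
Your proof is correct and takes essentially the same route as the paper's: the paper's separate uses of the descent lemma, the weak-convexity inequality for $r$, and the optimality condition for $T_{\alpha}(x)$ are exactly your strong-convexity facts about $h_x$ unpacked --- in particular, your PL-type bound in (v) is, term for term, the paper's computation with the subgradient $s=\nabla f(x)+\nu$, Cauchy--Schwarz, and maximization of the quadratic $\|s\|t-\tfrac{1}{2}\left(\tfrac{1}{\alpha}-\rho\right)t^2$. The only difference is organizational: you centralize all seven claims through the single observation that the inner objective is $\left(\tfrac{1}{\alpha}-\rho\right)$-strongly convex, which is a tidier but mathematically equivalent packaging of the paper's argument.
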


Under assumptions of Proposition~\ref{prop}, if $\alpha<\min\{1/L,1/\rho\}$, it is easy to show that functions $E_{\alpha}(x)$ and $G_{\alpha}(x)$ are continuous, and the mapping $T_{\alpha}(x)$ is closed and continuous.

%Before ending this subsection, let us introduce the following generalized descent inequality.
\begin{lemma}[Generalized descent inequality]\label{lemma:1}
Suppose Assumption~\ref{assump1} holds. Choose $\alpha\in(0,\min\{1/L,1/\rho\})$. Then, for any $x\in\RR^d$ and  $u\in\RR^d$ we have 
\begin{equation}\label{eq:descent}
\phi(T_{\alpha}(x))-\phi(u)\leq\frac{1}{2}\left[\left(\frac{1}{\alpha}+L\right)\|x-u\|^2-\left(\frac{1}{\alpha}-L\right)\|x-T_{\alpha}(x)\|^2-\left(\frac{1}{\alpha}-\rho\right)\|u-T_{\alpha}(x)\|^2\right].
\end{equation}
\end{lemma}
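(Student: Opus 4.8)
The plan is to write $p := T_\alpha(x)$ and estimate $\phi(p) - \phi(u) = [f(p)-f(u)] + [r(p)-r(u)]$ by bounding each bracket with an inequality of the \emph{right direction}, then to eliminate the resulting subgradient via the first-order optimality condition for the subproblem~\eqref{defi:Tk} together with a polarization identity. The starting point is that optimality condition: since $p$ minimizes the $\left(\frac{1}{\alpha}-\rho\right)$-strongly convex objective in~\eqref{defi:Tk} and its $\frac{1}{2\alpha}\|x-\cdot\|^2 + \langle\nabla f(x),\cdot-x\rangle$ part is smooth, the subdifferential sum rule gives some $\nu \in \partial r(p)$ with $\nabla f(x) + \nu + \frac{1}{\alpha}(p - x) = 0$, equivalently $\nabla f(x) + \nu = \frac{1}{\alpha}(x - p)$.

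For the smooth term I would use the two faces of Assumption~\ref{assump1}. The $L$-Lipschitz gradient yields the descent-lemma \emph{upper} bound $f(p) \le f(x) + \langle \nabla f(x), p - x\rangle + \frac{L}{2}\|p-x\|^2$, while the same hypothesis makes $f$ itself $L$-weakly convex (as recorded just after~\eqref{eq:wcvx inequality}), giving the \emph{lower} bound $f(u) \ge f(x) + \langle \nabla f(x), u - x\rangle - \frac{L}{2}\|u-x\|^2$. Subtracting these, the values $f(x)$ cancel and the two linear terms combine to $\langle \nabla f(x), p - u\rangle$, so that $f(p) - f(u) \le \langle \nabla f(x), p - u\rangle + \frac{L}{2}\|p - x\|^2 + \frac{L}{2}\|u - x\|^2$.

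For the possibly nonsmooth term I would apply $\rho$-weak convexity of $r$ at $p$ with the subgradient $\nu$ furnished above: by~\eqref{eq:wcvx inequality}, $r(u) \ge r(p) + \langle \nu, u - p\rangle - \frac{\rho}{2}\|u - p\|^2$, hence $r(p) - r(u) \le \langle \nu, p - u\rangle + \frac{\rho}{2}\|u - p\|^2$. Adding this to the bound of the previous paragraph collects the linear terms into $\langle \nabla f(x) + \nu, p - u\rangle$, which the optimality condition turns into $\frac{1}{\alpha}\langle x - p, p - u\rangle$. The final move is the identity $2\langle x - p, p - u\rangle = \|x - u\|^2 - \|x - p\|^2 - \|p - u\|^2$, coming from $(x - p) + (p - u) = x - u$; substituting it and grouping the coefficients of $\|x-u\|^2$, $\|x - T_\alpha(x)\|^2$ and $\|u - T_\alpha(x)\|^2$ produces exactly $\frac{1}{2}\left(\frac{1}{\alpha} + L\right)$, $-\frac{1}{2}\left(\frac{1}{\alpha} - L\right)$ and $-\frac{1}{2}\left(\frac{1}{\alpha} - \rho\right)$, which is the asserted inequality.

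The delicate point---more bookkeeping than a genuine obstacle---is to pair each function value with the correct estimate: the \emph{upper} (smoothness) bound must be used for $f$ at the updated point $p$ and the \emph{lower} (weak-convexity) bound for $f$ at the comparison point $u$, so that the gradient terms telescope cleanly into $\langle\nabla f(x),p-u\rangle$ before the optimality relation is invoked; swapping the two would leave an uncancelled gradient contribution. Notably, the derivation itself never uses $\alpha < \min\{1/L, 1/\rho\}$: that restriction only guarantees positivity of the coefficients $\frac{1}{\alpha} - L$ and $\frac{1}{\alpha} - \rho$, which becomes relevant only when this lemma is later specialized (for instance, with $u = x$) to extract genuine descent.
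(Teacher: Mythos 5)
Your proof is correct and is essentially the paper's own argument: the paper bounds the quantity $\Delta=\langle\nabla f(x),T_{\alpha}(x)-u\rangle+r\left(T_{\alpha}(x)\right)-r(u)$ from below using exactly your two $f$-estimates (the descent lemma at $T_{\alpha}(x)$ and the $L$-weak convexity of $f$ at $u$, both anchored at $x$), and from above using the $\left(\frac{1}{\alpha}-\rho\right)$-strong convexity of the subproblem~\eqref{defi:Tk} evaluated at its minimizer --- which is precisely what your combination of the optimality condition $\nabla f(x)+\nu=\frac{1}{\alpha}\left(x-T_{\alpha}(x)\right)$, the weak-convexity subgradient inequality for $r$, and the polarization identity reproduces upon expansion. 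One small caveat to your closing remark: the restriction $\alpha<1/\rho$ is not only about the signs of the coefficients, since it also makes the subproblem strongly convex and thereby guarantees that $T_{\alpha}(x)$ exists and is single-valued, without which the optimality condition you invoke would not be available.
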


%Based on the properties presented in Proposition~\ref{prop} and Lemma~\ref{lemma:1}, we shall conclude %can derive the 
%convergence of the PG method for (OP) in the next subsection.

\subsection{Proximal Gradient (PG) method for solving (OP)}\label{sec:converge_pg}
Let %the step-size 
$\alpha>0$. The {\it proximal gradient}\/ (PG) method for (OP) %can be written 
is formalized as Algorithm~\ref{alg:PGOP} (cf.~\cite{ZDLZ21}). 
\begin{algorithm}[h]
\caption{Proximal Gradient Method for Solving (OP)}
{\bf Initialization:}  $x^0\in\setX$;
\begin{algorithmic}[1]
\For{$k=0,1,\ldots$}
\State Update  $x^{k+1}:=\arg\min_{x\in\RR^d}\left[\langle\nabla f(x^k),x-x^k\rangle+r(x)+\frac{1}{2\alpha}\|x-x^k\|^2\right]$.
\EndFor
\end{algorithmic}
\label{alg:PGOP}
\end{algorithm}

Following~\cite{ZDLZ21}, one 
%we can readily establish the 
obtains convergence of %the PG method 
Algorithm~\ref{alg:PGOP} 
for problem (OP) under  Assumption~\ref{assump1} alone. Specifically, we have 
\[
\lim\limits_{k\rightarrow\infty}\dist(0,\partial\phi(x^{k+1}))=0\quad\mbox{and}\quad\min\limits_{1\leq j\leq k}\dist(0,\partial\phi(x^{j+1}))=o(\sqrt{1/k}).
\]
What happens next is that we shall sharpen the convergence results under additional assumptions. In particular, we first introduce the notion of {\it level-set proximal error bound}\/ ($\ell$-PEB). As we shall see in Theorem~\ref{theo:n-s} that   
%the level-set error bound 
($\ell$-PEB) is a necessary and sufficient condition for Algorithm~\ref{alg:PGOP} to have local linear convergence to the set $[\phi\leq\phi^*]=\{x\in\RR^d:\phi(x)\leq\phi^*\}$ (or $\setX_{\text{\tiny OP}}^*$) 
%of the PG method 
for (OP). In the remainder of our discussion, unless otherwise stated we shall denote $\bar{\phi}=\phi(\bar{x})$ for some given $\bar{x}\in\dom\phi$, and denote $[\phi\leq\bar{\phi}]:=\{x\in\RR^d:\phi(x)\leq\phi(\bar{x})\}$, $[\phi<\bar{\phi}]:=\{x\in\RR^d:\phi(x)<\phi(\bar{x})\}$, $[\phi\geq\bar{\phi}]:=\{x\in\RR^d:\phi(x)\geq\phi(\bar{x})\}$, and $[\phi>\bar{\phi}]:=\{x\in\RR^d:\phi(x)>\phi(\bar{x})\}$.

\begin{definition}[Level-set proximal error bound ($\ell$-PEB)]\label{defi:lPEB} The function $\phi$ is said to satisfy the level-set proximal error bound condition on the level set $[\bar{\phi}\leq\phi<\bar{\phi}+\nu]$ with $\bar{\phi}$ and $\nu>0$, if there is $c_1>0$ such that
\[
\dist(x,[\phi\leq\bar{\phi}])\leq c_1\|x-T_{\alpha}(x)\|,\quad\forall x\in[\bar{\phi}\leq\phi\leq\bar{\phi}+\nu],
\]
where $T_{\alpha}(x)$ is defined in Subsection~\ref{subsec:composite_op}.
\end{definition}

\begin{theorem}{\rm(Necessary and sufficient conditions for linear convergence relative to $[\phi\leq\phi^*]$ of 
%PG method 
Algorithm~\ref{alg:PGOP} 
for (OP))}.\label{theo:n-s}
Suppose Assumption~\ref{assump1} holds. Let a sequence $\{x^k\}$ be generated by Algorithm~\ref{alg:PGOP}, and let $\nu>0$ be given. We have: \\
{\rm(i)} For any  initial point $x^0\in[\phi^*<\phi<\phi^*+\nu]$, if ($\ell$-PEB) holds on $[\phi^*\leq\phi<\phi^*+\nu]$ with $c_1>0$, then 
%the PG method 
Algorithm~\ref{alg:PGOP} 
with stepsize $\alpha\in\left(0,\frac{1}{(c_1^2+1)(L+\rho)}\right)$ converges linearly with respect to level-set $[\phi\leq\phi^*]$, i.e.,
\begin{equation}\label{x-Q-linear}
\dist\left(x^{k+1},[\phi\leq\phi^*]\right)\leq\beta \dist\left(x^{k},[\phi\leq\phi^*]\right),\quad\forall k\geq0,
\end{equation}
with $\beta\in(0,1)$.\\
{\rm(ii)} If $\alpha\in(0,\min\{1/L,1/\rho\})$ and 
%the PG method 
Algorithm~\ref{alg:PGOP} 
converges linearly in the sense of~\eqref{x-Q-linear} with $\beta\in(0,1)$, then $\phi$ satisfies the ($\ell$-PEB) condition on $[\phi^*\leq\phi<\phi^*+\nu]$.
%with mild $c_1>0$.
\end{theorem}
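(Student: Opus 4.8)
The plan is to prove the two implications separately, since they rest on different mechanisms: the sufficiency in (i) combines the generalized descent inequality of Lemma~\ref{lemma:1} with the ($\ell$-PEB) bound to manufacture a one-step contraction of the distance to the optimal level set, whereas the necessity in (ii) is a short argument that reads the error bound off the single-step form of the linear rate via the triangle inequality.

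For part (i), I would first show that the whole trajectory stays in the region where ($\ell$-PEB) is available. Since $x^{k+1}=T_{\alpha}(x^k)$, Proposition~\ref{prop}(iii) gives $\phi(x^{k+1})\le\phi(x^k)$, so the values decrease monotonically; together with $\phi(x^k)\ge\phi^*$ and $\phi(x^0)<\phi^*+\nu$ this yields $x^k\in[\phi^*\le\phi<\phi^*+\nu]$ for all $k$, so the error bound applies at every iterate. Writing $d_k:=\dist(x^k,[\phi\le\phi^*])$ and letting $u_k$ be a projection of $x^k$ onto the closed nonempty set $[\phi\le\phi^*]=\setX_{\text{\tiny OP}}^*$ (so $\phi(u_k)=\phi^*$), I would apply Lemma~\ref{lemma:1} with $x=x^k$, $u=u_k$. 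Since $\phi(u_k)=\phi^*\le\phi(x^{k+1})$ the left-hand side is nonnegative, and after rearranging this gives
\[
\left(\tfrac1\alpha-\rho\right)\|u_k-x^{k+1}\|^2\le\left(\tfrac1\alpha+L\right)d_k^2-\left(\tfrac1\alpha-L\right)\|x^k-x^{k+1}\|^2 .
\]
Invoking ($\ell$-PEB), namely $d_k\le c_1\|x^k-T_{\alpha}(x^k)\|=c_1\|x^k-x^{k+1}\|$, I would replace $\|x^k-x^{k+1}\|^2$ by its lower bound $d_k^2/c_1^2$ (legitimate because the stepsize ceiling forces $\tfrac1\alpha-L>0$), and use $d_{k+1}\le\|x^{k+1}-u_k\|$, obtaining $d_{k+1}^2\le\beta^2 d_k^2$ with
\[
\beta^2=\frac{\left(\tfrac1\alpha+L\right)-\tfrac1{c_1^2}\left(\tfrac1\alpha-L\right)}{\tfrac1\alpha-\rho}.
\]
The closing bookkeeping step is to verify $\beta\in(0,1)$: a direct computation shows $\beta^2<1$ is equivalent to $\tfrac1\alpha>c_1^2(L+\rho)+L$, which is implied by $\alpha<\tfrac1{(c_1^2+1)(L+\rho)}$ (using $\rho\ge0$). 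Taking square roots gives exactly~\eqref{x-Q-linear}.

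For part (ii), the key observation is that the linear-rate inequality~\eqref{x-Q-linear}, read at the initial index $k=0$, holds from every admissible starting point; taking an arbitrary $x\in[\phi^*\le\phi<\phi^*+\nu]$ as $x^0$ (so that $x^1=T_{\alpha}(x)$) it yields $\dist(T_{\alpha}(x),[\phi\le\phi^*])\le\beta\,\dist(x,[\phi\le\phi^*])$. The triangle inequality $\dist(x,[\phi\le\phi^*])\le\|x-T_{\alpha}(x)\|+\dist(T_{\alpha}(x),[\phi\le\phi^*])$ then gives $(1-\beta)\dist(x,[\phi\le\phi^*])\le\|x-T_{\alpha}(x)\|$, i.e.\ ($\ell$-PEB) with $c_1=1/(1-\beta)$. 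On the slice $\phi=\phi^*$ the bound is trivial, as the left-hand distance is zero (indeed $T_{\alpha}(x)=x$ there, by Proposition~\ref{prop}(iv)--(v)), so the bound holds on all of $[\phi^*\le\phi<\phi^*+\nu]$.

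The main obstacle lies in part (i): feeding the error bound into the descent inequality so that the negative term $-(\tfrac1\alpha-L)\|x^k-x^{k+1}\|^2$ dominates, and then confirming that the resulting contraction factor is genuinely below $1$ precisely under the stated stepsize ceiling — this is where the constant $(c_1^2+1)(L+\rho)$ must be tracked carefully. Part (ii) is conceptually simpler, its only delicate point being the reading of~\eqref{x-Q-linear} as valid from every starting point in the level set, so that it can be applied pointwise.
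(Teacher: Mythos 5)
Your proposal is correct and follows essentially the same route as the paper's proof in Appendix~C: part (i) applies Lemma~\ref{lemma:1} with $x=x^k$ and $u$ the projection of $x^k$ onto $[\phi\leq\phi^*]$, drops the nonnegative value gap, inserts ($\ell$-PEB) to lower-bound $\|x^k-x^{k+1}\|^2$ by $\dist^2(x^k,[\phi\leq\phi^*])/c_1^2$, and arrives at the same squared contraction factor $\bigl(\tfrac1\alpha+L-\tfrac1{c_1^2}(\tfrac1\alpha-L)\bigr)/(\tfrac1\alpha-\rho)$ under the same stepsize ceiling, while part (ii) is the paper's triangle-inequality argument yielding $c_1=1/(1-\beta)$. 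Your explicit verification of the invariance $x^k\in[\phi^*\leq\phi<\phi^*+\nu]$ and your flagging that~\eqref{x-Q-linear} must be read pointwise (from every admissible initial point) in part (ii) match steps the paper takes implicitly.
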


%\begin{proof}
The proof of Theorem~\ref{theo:n-s} is relegated to Appendix~\ref{app:n-s}. 
%\end{proof}

%\begin{remark}
We shall remark that 
Theorem~\ref{theo:n-s} strengthens the related results in~\cite{ZDLZ21}. First of all, Theorem~\ref{theo:n-s} %~\ref{theo:n-s-VIP} 
removes the level set boundedness condition required in~\cite{ZDLZ21}. Secondly,  
more tangible sufficient conditions for ($\ell$-PEB) are provided in Subsection~\ref{sec:eb}. 

\subsection{A study of uniform error bounds and level-set error bounds}\label{sec:eb}

Since Theorem~\ref{theo:n-s} stipulates that condition ($\ell$-PEB) is key to local linear convergence for Algorithm~\ref{alg:PGOP}, it is natural to study the connections between condition ($\ell$-PEB) and other tangible and well-studied error-bound conditions in the literature. The aim of this subsection is to present a road-map between these conditions, and to identify suitable sufficient conditions under which the smooth gap function for (VIP) would submit itself to the conclusion of Theorem~\ref{theo:n-s}. 
In this subsection, we study the connections of uniform error bounds (cf.~\cite{WLLL23}) and level-set error bounds (cf.~\cite{ZDLZ21, LDZ23}) for the general problem (OP), and show that the uniform error bounds are sufficient for the level-set error bounds (including ($\ell$-PEB)) to hold. 
%{\color{red}\sout{In the remainder of our discussion, unless otherwise stated we shall denote $\bar{\phi}=\phi(\bar{x})$ for some given $\bar{x}\in\dom\phi$, and denote $[\phi\leq\bar{\phi}]:=\{x\in\RR^d:\phi(x)\leq\phi(\bar{x})\}$, $[\phi<\bar{\phi}]:=\{x\in\RR^d:\phi(x)<\phi(\bar{x})\}$, $[\phi\geq\bar{\phi}]:=\{x\in\RR^d:\phi(x)\geq\phi(\bar{x})\}$, and $[\phi>\bar{\phi}]:=\{x\in\RR^d:\phi(x)>\phi(\bar{x})\}$.}}

Let us first introduce the following conditions under the title of 
{\it uniform error bound conditions}. 
\begin{definition}[Uniform error bounds]$\quad$\\
\vspace{-0.5cm}
\begin{itemize}
\item[{\rm(1)}] {\rm(}{\bf Uniform Kurdyka-{\L}ojasiewicz property (u-KL)}{\rm)} Let $\Omega\subset\dom\partial\phi$ be a nonempty set. A proper and closed function: $\phi:\RR^d\rightarrow\RR$ satisfies the uniform Kurdyka-{\L}ojasiewicz property on $\Omega$ with constant $\mu_1>0$ if there is $\delta>0$ and $\nu>0$ such that
 \[
\phi(x)-\phi(\bar{x})\leq\mu_1\dist^2(0,\partial\phi(x))
\]
for all $\bar{x}\in\Omega$ and any $x$ with $\dist(x,\Omega)<\delta$ and $x\in[\bar{\phi}<\phi<\bar{\phi}+\nu]$.
\item[{\rm(2)}] {\rm(}{\bf Uniform level-set subdifferential error bound (u-SEB)}{\rm)} Let $\Omega\subset\dom\partial\phi$ be a nonempty set. A proper and closed function $\phi:\RR^d\rightarrow\RR$ satisfies the uniform level-set subdifferential error bound on $\Omega$ with constant $\mu_2>0$, if there are $\delta>0$ and $\nu>0$ such that
 \[
\dist(x,[\phi\leq\bar{\phi}])\leq\mu_2\dist(0,\partial\phi(x)),
\]
for all $\bar{x}\in\Omega$ and any $x$ with $\dist(x,\Omega)<\delta$ and $x\in[\bar{\phi}<\phi<\bar{\phi}+\nu]$.
\item[{\rm(3)}] {\rm(}{\bf Uniform H\"older error bound (u-HEB) with $\gamma=2$}{\rm)} Let $\Omega\subset\dom\partial\phi$ be a nonempty set. A proper and closed function: $\phi:\RR^d\rightarrow\RR$ satisfies the uniform H\"older error bound (u-HEB) with $\gamma=2$ on $\Omega$ with constant $\mu_3>0$, if there are $\delta>0$ and $\nu>0$ such that
 \[
\dist^2(x,[\phi\leq\bar{\phi}])\leq\mu_3[\phi(x)-\phi(\bar{x})],
\]
for all $\bar{x}\in\Omega$ and any $x$ with $\dist(x,\Omega)<\delta$ and $x\in[\bar{\phi}<\phi<\bar{\phi}+\nu]$.
\end{itemize}
\end{definition}
Proposition 2.2 of~\cite{WLLL23} states that if $\phi:\RR^d\rightarrow\RR\cup\{\infty\}$ is a proper and closed function and is weakly convex, $\Omega$ is compact and $\phi$ takes a constant value on $\Omega$, then the above-mentioned uniform error bounds on $\Omega$ are equivalent; that is,
\[
\begin{array}{ccccc}
\mbox{(u-KL)}&\Longleftrightarrow&\mbox{(u-SEB)}&\Longleftrightarrow&\mbox{(u-HEB)}.
\end{array}
\]
%Moreover, we noted that, by the l.s.c of $\phi$, we have $[\bar{\phi}<\phi<\bar{\phi}+\nu]=[\bar{\phi}\leq\phi\leq\bar{\phi}+\nu]$. 

To complete the picture regarding the local linear convergence properties of the proximal gradient method, next we introduce a different set of conditions under the title of {\it level-set related error bound conditions}. As we shall see later, they are intricately related to the uniform error bound conditions that we introduced earlier. 

%{\color{red} (SZ: The level set error bounds and the earlier error bounds seem to differ by changing $[\bar{\phi}<\phi<\bar{\phi}+\nu]$ to  $[\bar{\phi}\le \phi\le \bar{\phi}+\nu]$. Should that point be made explicit here? Why is the latter called level set while the previous called uniform? In what to follow, it seems that the uniform error bounds are the ones we need to use. Then, why should we spend so much energy in studying the level set error bounds?)}

\begin{definition}[Level-set error bounds]\label{defi:bebs}$\quad$\\
\vspace{-0.8cm}
\begin{itemize}
\iffalse
\item[{\rm(1)}] {\rm(}{\bf Level-set proximal error bound ($\ell$-PEB)}{\rm)} The function $\phi$ is said to satisfy the strong level-set proximal error bound condition on the level set $[\bar{\phi}\leq\phi\leq\bar{\phi}+\nu]$ with $\bar{\phi}$ and $\nu>0$, if there is $c_1>0$ such that
\[
\dist(x,[\phi\leq\bar{\phi}])\leq c_1\|x-T_{\alpha}(x)\|,\quad\forall x\in[\bar{\phi}\leq\phi\leq\bar{\phi}+\nu],
\]
where $T_{\alpha}(x)$ is defined in subsection~\ref{subsec:composite_op}.
\fi\item[{\rm(1)}] {\rm(}{\bf Level-set subdifferential error bound ($\ell$-SEB)}{\rm)} The function $\phi$ is said to satisfy the level-set subdifferential error bound condition on the level set $[\bar{\phi}\leq\phi<\bar{\phi}+\nu]$ with $\bar{\phi}$ and $\nu>0$, if there is $c_2>0$ such that
\[
\dist(x,[\phi\leq\bar{\phi}])\leq c_2\dist(0,\partial\phi(x)),\quad\forall x\in[\bar{\phi}\leq\phi<\bar{\phi}+\nu].
\]
\item[{\rm(2)}] {\rm(}{\bf Level-set Kurdyka-{\L}ojasiewicz property ($\ell$-KL)}{\rm)} The function $\phi$ is said to satisfy the KL property on the level set $[\bar{\phi}\leq\phi<\bar{\phi}+\nu]$ with $\bar{\phi}$ and $\nu>0$, if there is $c_3>0$ such that
\[
c_3[\phi(x)-\bar{\phi}]\leq\dist^2(0,\partial\phi(x)),\quad\forall x\in[\bar{\phi}\leq\phi<\bar{\phi}+\nu].
\]
\item[{\rm(3)}] {\rm(}{\bf Level-set proximal gap-Polyak-{\L}ojasiewic inequality ($\ell$-PGAP)}{\rm)} The function $\phi$ is said to satisfy the proximal gap-PL inequality on the level set $[\bar{\phi}\leq\phi<\bar{\phi}+\nu]$ with $\bar{\phi}$ and $\nu>0$, if there is $c_4>0$ such that
\[
G_{\alpha}(x)\geq c_4[\phi(x)-\bar{\phi}],\quad\forall x\in[\bar{\phi}\leq\phi<\bar{\phi}+\nu],
\]
where $G_{\alpha}(x)$ is defined in Subsection~\ref{subsec:composite_op}.
\item[{\rm(4)}] {\rm(}{\bf Level-set H\"older error bound with $\gamma=2$ ($\ell$-HEB)}{\rm)} The function $\phi$ is said to satisfy the H\"older error bound condition on the level set $[\bar{\phi}\leq\phi<\bar{\phi}+\nu]$ with $\bar{\phi}$ and $\nu>0$, if there is $c_5>0$ such that
\[
c_5\dist^2(x,[\phi\leq\bar{\phi}])\leq\phi(x)-\bar{\phi},\quad\forall x\in[\bar{\phi}\leq\phi<\bar{\phi}+\nu].
\]
\end{itemize}
\end{definition}

The following proposition establishes the relationships among the above level-set error bounds and ($\ell$-PEB). The proof of this proposition is relegated to  Appendix~\ref{app:rela_leb}
\begin{proposition}\label{prop:beb}
Let $\nu>0$, and suppose that the level-set error bounds 
%(which ones?) 
in Definitions~\ref{defi:lPEB} and~\ref{defi:bebs} hold on the set $[\bar{\phi}\leq\phi<\bar{\phi}+\nu]$. Then, the following statements hold true:

{\rm(i)} {\rm($\ell$-PEB)} $\Longleftrightarrow$ {\rm($\ell$-SEB)}.

{\rm(ii)} {\rm($\ell$-PEB)} $\Longrightarrow$ {\rm($\ell$-PGAP)} $\Longrightarrow$ {\rm($\ell$-KL)}.
\end{proposition}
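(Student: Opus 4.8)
The engine for every implication is Proposition~\ref{prop}, which ties together the three natural residuals at a point $x$ in the level set $[\bar\phi\le\phi<\bar\phi+\nu]$: the level-set distance $\dist(x,[\phi\le\bar\phi])$, the proximal step length $\|x-T_\alpha(x)\|$, the subdifferential residual $\dist(0,\partial\phi(x))$, and the optimality gap $\phi(x)-\bar\phi$. The plan is to prove each implication by converting one residual into another using the chain of inequalities (iii)--(vii) of Proposition~\ref{prop}; throughout I fix $\alpha\in(0,\min\{1/L,1/\rho\})$ so that $1-\alpha\rho>0$ and $\tfrac{2}{\alpha}-L-\rho>0$.

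For part (i), the implication ($\ell$-PEB)$\Rightarrow$($\ell$-SEB) is immediate: starting from $\dist(x,[\phi\le\bar\phi])\le c_1\|x-T_\alpha(x)\|$ and applying Proposition~\ref{prop}(vi), namely $\|x-T_\alpha(x)\|\le\tfrac{\alpha}{1-\alpha\rho}\dist(0,\partial\phi(x))$, yields ($\ell$-SEB) with $c_2=\tfrac{c_1\alpha}{1-\alpha\rho}$. The reverse implication ($\ell$-SEB)$\Rightarrow$($\ell$-PEB) is where the real work lies, because one cannot bound $\dist(0,\partial\phi(x))$ pointwise by $\|x-T_\alpha(x)\|$ (a minimal subgradient of $\phi$ at $x$ can stay bounded away from $0$ while the proximal step vanishes). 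I would instead argue through the image point $T_\alpha(x)$: split into the case $T_\alpha(x)\in[\phi\le\bar\phi]$, where $\dist(x,[\phi\le\bar\phi])\le\|x-T_\alpha(x)\|$ holds trivially, and the case $\phi(T_\alpha(x))>\bar\phi$. In the latter case the descent property Proposition~\ref{prop}(iii) keeps $T_\alpha(x)$ inside $[\bar\phi\le\phi<\bar\phi+\nu]$, so ($\ell$-SEB) applies \emph{at} $T_\alpha(x)$ and, combined with Proposition~\ref{prop}(vii) (giving $\dist(0,\partial\phi(T_\alpha(x)))\le(L+\tfrac1\alpha)\|x-T_\alpha(x)\|$) and the triangle inequality $\dist(x,[\phi\le\bar\phi])\le\|x-T_\alpha(x)\|+\dist(T_\alpha(x),[\phi\le\bar\phi])$, produces ($\ell$-PEB) with $c_1=1+c_2(L+\tfrac1\alpha)$.

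For part (ii), the implication ($\ell$-PGAP)$\Rightarrow$($\ell$-KL) is a one-line consequence of Proposition~\ref{prop}(v): $c_4(\phi(x)-\bar\phi)\le G_\alpha(x)\le\tfrac{1}{2(1-\alpha\rho)}\dist^2(0,\partial\phi(x))$ gives ($\ell$-KL) with $c_3=2(1-\alpha\rho)c_4$. The substantive step is ($\ell$-PEB)$\Rightarrow$($\ell$-PGAP). Here I would start from the identity $\alpha G_\alpha(x)=\phi(x)-E_\alpha(x)$ of Proposition~\ref{prop}(ii) and upper-bound the envelope $E_\alpha(x)$ by evaluating its defining minimization at $u=\proj_{[\phi\le\bar\phi]}(x)$ rather than at the true minimizer. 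Using the two-sided estimate from the $L$-Lipschitz gradient of $f$ gives
\[
E_\alpha(x)\le\phi(u)+\tfrac12\left(L+\tfrac1\alpha\right)\|x-u\|^2\le\bar\phi+\tfrac12\left(L+\tfrac1\alpha\right)\dist^2\!\left(x,[\phi\le\bar\phi]\right),
\]
where $\phi(u)\le\bar\phi$ because $u\in[\phi\le\bar\phi]$. Substituting into the identity yields $\alpha G_\alpha(x)\ge\phi(x)-\bar\phi-\tfrac12(L+\tfrac1\alpha)\dist^2(x,[\phi\le\bar\phi])$. Finally I would absorb the distance term back into $G_\alpha$: ($\ell$-PEB) together with Proposition~\ref{prop}(iv) gives $\dist^2(x,[\phi\le\bar\phi])\le c_1^2\|x-T_\alpha(x)\|^2\le\tfrac{2\alpha^2c_1^2}{1-\alpha\rho}G_\alpha(x)$, so that, after rearranging, $G_\alpha(x)\ge c_4(\phi(x)-\bar\phi)$ with $c_4=\big[\alpha+\tfrac{c_1^2\alpha^2(L+1/\alpha)}{1-\alpha\rho}\big]^{-1}>0$.

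The main obstacle is the reverse direction of part (i): unlike the other implications it cannot be reduced to a single pointwise inequality between residuals, so the detour through $T_\alpha(x)$ — using descent to keep $T_\alpha(x)$ in the level set and Proposition~\ref{prop}(vii) to control its subdifferential — is essential. A secondary technical point I would verify is the level-set bookkeeping: that $u\in[\phi\le\bar\phi]$ lies in $\dom f$ so the smoothness estimate applies (it does, since $\phi(u)<\infty$), and that the open/closed distinction between the level sets in Definitions~\ref{defi:lPEB} and~\ref{defi:bebs} causes no loss, which is guaranteed by the monotonicity $\phi(T_\alpha(x))\le\phi(x)$.
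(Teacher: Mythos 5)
Your proof is correct, and for three of the four implications it coincides with the paper's own argument in Appendix~\ref{app:rela_leb}: ($\ell$-PEB)$\Rightarrow$($\ell$-SEB) via Proposition~\ref{prop}(vi), ($\ell$-PGAP)$\Rightarrow$($\ell$-KL) via Proposition~\ref{prop}(v), and ($\ell$-PEB)$\Rightarrow$($\ell$-PGAP) by exactly the paper's route — the identity $\alpha G_{\alpha}(x)=\phi(x)-E_{\alpha}(x)$ from Proposition~\ref{prop}(ii), evaluation of the envelope at $x_p=\proj_{[\phi\leq\bar{\phi}]}(x)$, the descent estimate from the $L$-Lipschitz gradient, and absorption of $\dist^2(x,[\phi\leq\bar{\phi}])$ through ($\ell$-PEB) and Proposition~\ref{prop}(iv). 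Your constant $c_4=\bigl[\alpha\bigl(1+\tfrac{c_1^2(1+\alpha L)}{1-\alpha\rho}\bigr)\bigr]^{-1}$ is in fact the \emph{corrected} form of the paper's: the displayed bound in Appendix~\ref{app:rela_leb} contains two slips, writing $c_1$ where $c_1^2$ is needed (($\ell$-PEB) bounds the distance, so squaring it produces $c_1^2$) and $1+\alpha\rho$ where $1-\alpha\rho$ should appear. The one genuine divergence is ($\ell$-SEB)$\Rightarrow$($\ell$-PEB): the paper disposes of it with a citation (``see Corollary 1 of~\cite{ZDLZ21}''), whereas you prove it in-line by splitting on whether $T_{\alpha}(x)$ lands in $[\phi\leq\bar{\phi}]$, using the descent property Proposition~\ref{prop}(iii) to keep $T_{\alpha}(x)$ inside the band $[\bar{\phi}\leq\phi<\bar{\phi}+\nu]$ in the second case, applying ($\ell$-SEB) at $T_{\alpha}(x)$ together with Proposition~\ref{prop}(vii), and closing with the triangle inequality to obtain $c_1=1+c_2\bigl(L+\tfrac{1}{\alpha}\bigr)$. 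That argument is sound, and it is the same one-step-forward technique the paper itself deploys later in Proposition~\ref{prop:eb_set} and in part (i) of Theorem~\ref{prop:suff-HEB}; its payoff is self-containedness (no detour to~\cite{ZDLZ21}), and your preliminary observation motivating it — that $\dist(0,\partial\phi(x))$ cannot be bounded pointwise by $\|x-T_{\alpha}(x)\|$, so one must pass through the image point — correctly identifies why a direct residual-to-residual inequality is unavailable in that direction.
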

The relationships of the level-set error bounds discussed in Proposition~\ref{prop:beb} is visualized in Figure~\ref{fig:1}.
\begin{figure}[h]
\centering
\begin{center}
\scriptsize
		\tikzstyle{format}=[rectangle,draw,thin,fill=white]
		\tikzstyle{test}=[diamond,aspect=2,draw,thin]
		\tikzstyle{point}=[coordinate,on grid,]
\begin{tikzpicture}
[%?????????latex ?????
>=latex,
%???????????
node distance=5mm,
% hv path ???????????????????????????????vh ????skip loop ???
%???-??-??? vskip loop ?????-???-??
 ract/.style={draw=blue!50, fill=blue!5,rectangle,minimum size=6mm, very thick, font=\itshape, align=center},
 racc/.style={rectangle, align=center},
 ractm/.style={draw=red!100, fill=red!5,rectangle,minimum size=6mm, very thick, font=\itshape, align=center},
 cirl/.style={draw, fill=yellow!20,circle,   minimum size=6mm, very thick, font=\itshape, align=center},
 raco/.style={draw=green!500, fill=green!5,rectangle,rounded corners=2mm,  minimum size=6mm, very thick, font=\itshape, align=center},
 hv path/.style={to path={-| (\tikztotarget)}},
 vh path/.style={to path={|- (\tikztotarget)}},
 skip loop/.style={to path={-- ++(0,#1) -| (\tikztotarget)}},
 vskip loop/.style={to path={-- ++(#1,0) |- (\tikztotarget)}}]

        \node (a) [racc]{\baselineskip=3pt\small {\bf ($\ell$-SEB)}};
        \node (b) [racc, left of = a, xshift=-40]{\baselineskip=3pt\small {\bf ($\ell$-PEB)}};
        \node (c) [racc, right of = a, xshift=40]{\baselineskip=3pt\small {\bf ($\ell$-KL)}};
        \node (d) [racc, below of = b, yshift=-20]{\baselineskip=3pt\small {\bf ($\ell$-PGAP)}};
        %\node (e) [racc, below of = d, yshift=-20]{\baselineskip=3pt\small {\bf ($\ell$-HEB)}};
        \path %(a) edge[->] (b)
              %(b) edge[->] (a)
              (-0.7,-0.1) edge[<-] (-1.15,-0.1)
              (-0.7,0.1) edge[->] (-1.15,0.1)
              (b) edge[->] (d);
         %     (b) edge[-] (-3.5,0);
        \path (d) edge[->, hv path] (c);
        %\path (-3.5,0) edge[->, vh path] (e);
        %\path (-10.25,-1.7) edge[->,vh path] (-2.45,3.15);
\end{tikzpicture}
\caption{The relationships among the notions of the level-set error bounds}\label{fig:1}
\end{center}
\end{figure}
In the rest of this subsection, we consider the specific level-set error bounds on $[\phi^*\leq\phi<\phi^*+\nu]$. We call the {\rm($\ell$-HEB)} on $[\phi^*\leq\phi<\phi^*+\nu]$ as the level-set quadratic growth condition {\rm($\ell$-QG)}. Next definition will introduce the definition of {\rm($\ell$-QG)} and restricted secant inequality (RSI).
\begin{definition}\label{defi:qg}
\begin{itemize}
\item[{\rm(1)}] {\rm({\bf Level-set quadratic growth ($\ell$-QG)})} The function $\phi$ is said to satisfy the level-set quadratic growth with $\phi^*$ and $\nu>0$, if there is $c_5>0$ such that
\[
c_5\dist^2(x,[\phi\leq\phi^*])\leq\phi(x)-\phi^*,\quad\forall x\in[\phi^*\leq\phi<\phi^*+\nu].
\]
\item[{\rm(2)}] {\rm({\bf Restricted secant inequality (RSI)})} The function $\phi$ is said to satisfy the restricted secant inequality with $\phi^*$ and $\nu>0$, if there is $c_6>0$ such that
\[
c_6\dist^2(x,[\phi\leq\phi^*])\leq\langle\xi,x-x_p\rangle,\quad\forall x\in[\phi^*\leq\phi <\phi^*+\nu]\quad\mbox{and}\quad\xi\in\partial\phi(x),
\]
with $x_p=\arg\min_{y\in\setX^*}\|x-y\|$.
\end{itemize}
\end{definition}
Next proposition shows that ($\ell$-PEB) on $[\phi^*\leq\phi<\phi^*+\nu]$ implies ($\ell$-QG) and (RSI) implies the ($\ell$-SEB) on $[\phi^*\leq\phi <\phi^*+\nu]$. The proof of Proposition~\ref{prop:beb-2}{} can be found in Appendix~\ref{app:rela_leb_2}.

\begin{proposition}\label{prop:beb-2}
Let $\nu>0$. Regarding the level-set error bounds in Definition~\ref{defi:bebs} on set $[\phi^*\leq\phi <\phi^*+\nu]$ and Definition~\ref{defi:qg}, the following relationships hold: 

{\rm(i)} {\rm($\ell$-PEB)} on $[\phi^*\leq\phi<\phi^*+\nu]$ $\Longrightarrow$ {\rm($\ell$-QG)}.

{\rm(ii)} {\rm(RSI)} $\Longrightarrow$ {\rm($\ell$-SEB)} on $[\phi^*\leq\phi<\phi^*+\nu]$ (or {\rm($\ell$-PEB)} on $[\phi^*\leq\phi<\phi^*+\nu]$).
%\item[{\rm(v)}] {\rm($\ell$-SEB)} $\Rightarrow$ {\rm($\ell$-MS)}.
\end{proposition}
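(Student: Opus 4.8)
The plan is to derive each implication directly from the properties already established in Proposition~\ref{prop}, together with elementary inequalities, exploiting the fact that $\bar\phi=\phi^*$ is the optimal value: the level set $[\phi\le\phi^*]$ then coincides with the solution set $\setX_{\text{\tiny OP}}^*$, and $\phi(T_\alpha(x))\ge\phi^*$ for every $x\in\RR^d$.

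For part~(i), I would begin from the descent estimate of Proposition~\ref{prop}(iii),
\[
\phi(x)-\phi(T_\alpha(x))\ge\tfrac12\Big(\tfrac{2}{\alpha}-L-\rho\Big)\|x-T_\alpha(x)\|^2,
\]
and observe that since $\phi(T_\alpha(x))\ge\phi^*$ its left-hand side is at most $\phi(x)-\phi^*$. The stepsize condition $\alpha\in(0,\min\{1/L,1/\rho\})$ guarantees $\frac{2}{\alpha}-L-\rho>0$ (indeed $\min\{1/L,1/\rho\}\le 2/(L+\rho)$). Substituting the ($\ell$-PEB) bound $\|x-T_\alpha(x)\|\ge\frac{1}{c_1}\dist(x,[\phi\le\phi^*])$ into the right-hand side then yields
\[
\phi(x)-\phi^*\ge\frac{1}{2c_1^2}\Big(\frac{2}{\alpha}-L-\rho\Big)\dist^2(x,[\phi\le\phi^*]),
\]
which is exactly ($\ell$-QG) with $c_5=\frac{1}{2c_1^2}\big(\frac{2}{\alpha}-L-\rho\big)$. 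This part amounts to a short chaining of inequalities.

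For part~(ii), the key observation is that in (RSI) the point $x_p=\arg\min_{y\in\setX_{\text{\tiny OP}}^*}\|x-y\|$ is the metric projection of $x$ onto $[\phi\le\phi^*]$, so that $\|x-x_p\|=\dist(x,[\phi\le\phi^*])$. Applying Cauchy--Schwarz to the right-hand side of (RSI) gives, for every $\xi\in\partial\phi(x)$,
\[
c_6\dist^2(x,[\phi\le\phi^*])\le\langle\xi,x-x_p\rangle\le\|\xi\|\,\dist(x,[\phi\le\phi^*]).
\]
On the set where $\dist(x,[\phi\le\phi^*])>0$ I would divide through to obtain $c_6\dist(x,[\phi\le\phi^*])\le\|\xi\|$, and then take the infimum over $\xi\in\partial\phi(x)$ to arrive at $\dist(x,[\phi\le\phi^*])\le\frac{1}{c_6}\dist(0,\partial\phi(x))$; when the distance vanishes the inequality is trivially true. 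This is precisely ($\ell$-SEB) with $c_2=1/c_6$, and the parenthetical equivalence with ($\ell$-PEB) follows from Proposition~\ref{prop:beb}(i).

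Neither implication poses a genuine analytical difficulty; the only points requiring care are verifying that the coefficient $\frac{2}{\alpha}-L-\rho$ is strictly positive under the stepsize restriction (so that $c_5>0$), and treating the degenerate case $\dist(x,[\phi\le\phi^*])=0$ separately in part~(ii) before dividing. The main conceptual step, common to both parts, is recognizing that $[\phi\le\phi^*]=\setX_{\text{\tiny OP}}^*$, which lets us replace $\phi(T_\alpha(x))$ by the lower bound $\phi^*$ and identify $x_p$ with the projection realizing $\dist(x,[\phi\le\phi^*])$.
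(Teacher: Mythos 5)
Your proposal is correct and follows essentially the same route as the paper's proof: part (i) chains the ($\ell$-PEB) bound with the descent estimate of Proposition~\ref{prop}(iii) and the fact that $\phi(T_{\alpha}(x))\geq\phi^*$, and part (ii) applies Cauchy--Schwarz to (RSI), identifies $\|x-x_p\|$ with $\dist(x,[\phi\leq\phi^*])$, and invokes Proposition~\ref{prop:beb}(i) for the ($\ell$-PEB) parenthetical. Your explicit treatment of the degenerate case $\dist(x,[\phi\leq\phi^*])=0$ and the verification that $\frac{2}{\alpha}-L-\rho>0$ are small points of care that the paper leaves implicit, but they do not change the argument.
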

Recall the optimization problem (OP), and we make the following assumption on $\setX_{\rm\tiny OP}^*$.
\begin{assumption}\label{assump2}
The optimal solution set $\setX_{\rm\tiny OP}^*$ of (OP) is compact.
\end{assumption}

The following two lemmas reveal %will discover 
the relationship between the {\it uniform error bound conditions}\/ on $\Omega=\setX_{\text{\tiny OP}}^*$ and the {\it level-set error bound conditions}\/ on $[\phi^*\leq\phi<\phi^*+\nu]$, $x^*\in\setX_{\text{\tiny OP}}^*$, under Assumptions~\ref{assump1} and~\ref{assump2}.
\begin{lemma}\label{lemma:u-l}
Suppose Assumptions~\ref{assump1} and~\ref{assump2} hold for (OP). If one of the three uniform error bounds \{(u-KL), (u-SEB), (u-HEB)\} holds on $\Omega=\setX_{\text{\tiny OP}}^*$ with $\mu>0$, $\nu>0$, and, $\delta>0$, then there is $\nu'>0$ such that the three level-set error bound also hold on $[\phi^*\leq\phi<\phi^*+\nu']$ with the same constant.
\end{lemma}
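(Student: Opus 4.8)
The plan is to push the uniform bound, which by hypothesis is only guaranteed on the tube $\{x:\dist(x,\Omega)<\delta\}$, out to the entire near-optimal level set, by exhibiting a smaller threshold $\nu'$ for which that level set sits \emph{inside} the tube. First I would upgrade the single available bound to all three: since $\nabla f$ is $L$-Lipschitz (hence $f$ is $L$-weakly convex) and $r$ is $\rho$-weakly convex, $\phi=f+r$ is $(L+\rho)$-weakly convex; combined with the compactness of $\Omega=\setX_{\text{\tiny OP}}^*$ (Assumption~\ref{assump2}) and the fact that $\phi\equiv\phi^*$ on $\Omega$, Proposition~2.2 of~\cite{WLLL23} makes (u-KL), (u-SEB) and (u-HEB) equivalent, so it is enough to run one transfer argument per bound. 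I also record that $[\phi\le\phi^*]=\Omega$, because $\phi^*$ is the global minimum value; hence every occurrence of $\dist(x,[\phi\le\phi^*])$ is literally $\dist(x,\Omega)$, and the distance terms on both sides of the uniform and level-set inequalities coincide.

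The argument then reduces to one containment claim: there is $\nu'\in(0,\nu]$ with $[\phi^*<\phi<\phi^*+\nu']\subseteq\{x:\dist(x,\Omega)<\delta\}$. Granting it, fix any $x$ with $\phi^*\le\phi(x)<\phi^*+\nu'$. If $\phi(x)=\phi^*$ then $x\in\Omega$ and all three level-set inequalities hold trivially (both sides vanish). Otherwise $\phi^*<\phi(x)<\phi^*+\nu'\le\phi^*+\nu$ and, by the containment, $\dist(x,\Omega)<\delta$; since $\phi\equiv\phi^*$ on $\Omega$, the hypothesis of the uniform bound on such an $x$ reduces exactly to these two conditions, so the uniform inequality applies at $x$. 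Reading off each uniform inequality then yields the matching level-set inequality on $[\phi^*\le\phi<\phi^*+\nu']$ with the same constant (directly for ($\ell$-SEB), and as its reciprocal for ($\ell$-KL) and ($\ell$-HEB)), which is the assertion of the lemma.

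The remaining task, and the main obstacle, is the containment. I would set $m:=\inf\{\phi(x):\dist(x,\Omega)\ge\delta\}$ and aim to prove $m>\phi^*$, after which $\nu':=\min\{\nu,(m-\phi^*)/2\}$ works. Take $\{x_k\}$ with $\dist(x_k,\Omega)\ge\delta$ and $\phi(x_k)\to m$. If $\{x_k\}$ has a bounded subsequence, then by compactness and lower semicontinuity it has a limit $\bar{x}$ with $\phi(\bar{x})\le m$ and $\dist(\bar{x},\Omega)\ge\delta$; were $m=\phi^*$, this would force $\bar{x}\in[\phi\le\phi^*]=\Omega$, contradicting $\dist(\bar{x},\Omega)\ge\delta>0$, so $m>\phi^*$ on bounded minimizing sequences. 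The genuinely delicate case is an \emph{unbounded} minimizing sequence, i.e.\ near-optimal points escaping to infinity while staying outside the tube: weak convexity by itself does not bound the near-optimal sublevel set, so compactness of $\Omega$ alone is not sufficient here. I would close this case using level-boundedness of $\phi$ near its minimum — available in the present setting (for instance through coercivity of the regularized gap function, so that some sublevel set $[\phi\le\phi^*+\nu_0]$ is bounded) — after which compactness of $\Omega$ and lower semicontinuity deliver the containment. This control at infinity is the step on which the whole transfer hinges and where the estimates must be made most carefully.
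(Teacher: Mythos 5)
Your proposal follows essentially the same route as the paper's own proof. Both arguments reduce the lemma to a single containment claim --- that for some $\nu'\in(0,\nu]$ the level set $[\phi^*\leq\phi<\phi^*+\nu']$ lies inside the tube $\{x:\dist(x,\setX_{\text{\tiny OP}}^*)<\delta\}$ --- after observing that points with $\phi(x)=\phi^*$ satisfy all three bounds trivially (both sides vanish) and that, once the containment holds, each uniform inequality reads off directly as the corresponding level-set inequality with the same constant. The paper proves the containment by setting $\nu_k=\nu/(k+1)$ and deriving a contradiction from a hypothetical sequence $x_k\in[\phi^*\leq\phi<\phi^*+\nu_k]$ with $\dist(x_k,\setX_{\text{\tiny OP}}^*)\geq\delta$; your formulation via $m=\inf\{\phi(x):\dist(x,\Omega)\geq\delta\}$ and a minimizing sequence is the same argument in different clothing.

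Where you genuinely diverge is at the step you flag as delicate, and you are right to flag it: it is the one step the paper's proof glosses over. The paper asserts that $\lim_{k}[\phi^*\leq\phi<\phi^*+\nu_k]=\setX_{\text{\tiny OP}}^*$ together with lower semicontinuity forces $\dist(x_k,\setX_{\text{\tiny OP}}^*)\to0$, but this inference is valid only when the sequence $\{x_k\}$ stays bounded; Assumptions~\ref{assump1} and~\ref{assump2} do not supply this, since compactness of the solution set does not prevent near-optimal points from escaping to infinity. Indeed, $\phi(x)=x^2/(1+x^4)$ on $\RR$ satisfies both assumptions and (u-HEB) on $\Omega=\{0\}$ with $\mu_3=1+\delta^4$, yet $[\phi<\nu']$ is unbounded for every $\nu'>0$ and each of ($\ell$-HEB), ($\ell$-SEB), ($\ell$-KL) fails on every such level set, so the containment --- and with it the lemma as literally stated --- genuinely requires some control at infinity. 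That said, your proposed repair is not justified from the lemma's hypotheses: ``coercivity of the regularized gap function'' is a property of the later application, not of general (OP) under Assumptions~\ref{assump1}--\ref{assump2}, so level-boundedness is not ``available in the present setting'' of the lemma itself. It does hold in the paper's intended use case (OP-$g_{\lambda}$) when $\setX$ is compact, since then $\dom\psi\subset\setX$ is bounded and the escape-to-infinity case is vacuous. In sum, your blind proof is as complete as the paper's own --- in fact more candid, since you isolate precisely the step the paper performs silently --- but neither argument closes the unbounded case from the stated hypotheses alone; the correct fix is to add level-boundedness (or boundedness of a single sublevel set $[\phi\leq\phi^*+\nu_0]$, as in Proposition~\ref{prop:eb_set}) as an explicit assumption, rather than to claim it is already implied.
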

\begin{proof} Denote $\setA_{\delta}:=\{x\in\RR^d : \: \dist(x,\setX_{\text{\tiny OP}}^*)<\delta\}$, $\setB_{\nu}:=[\phi^*<\phi<\phi^*+\nu]$, and $\bar{\setB}_{\nu}:=[\phi^*\leq\phi<\phi^*+\nu]$ for $\nu>0$ and $\bar{\setB}_{0}:=[\phi^*=\phi]$. 
%{\color{red} By the l.s.c.\ of $\phi$, we have $[\phi\leq\phi^*+\nu']=[\phi<\phi^*+\nu]$. ???} Then $\bar{\setB}_{\nu}=\setB_{\nu}\cup\setX_{\text{\tiny OP}}^*$. 
Since one of the uniform error bounds \{(u-KL), (u-SEB), (u-HEB)\} holds on $\setA_{\delta}\cap\setB_{\nu}$, 
%it is easy to get that 
by the fact $\phi(x)-\phi^*=0$ and $\dist(x,[\phi\leq\phi^*])=0$, $\forall x\in[\phi\leq\phi^*]=\setX_{\text{\tiny OP}}^*$, we conclude that 
%\sout{without loss of generality, we can assume}} 
the error bound condition holds on $\setA_{\delta}\cap\bar{\setB}_{\nu}$.
%, by slightly decreasing $\nu$ if necessary.
Setting $\nu_k=\nu/(k+1)$ for $k\in\setN$, we have 
\[
\setA_{\delta}\cap\bar{\setB}_{\nu_{k+1}}\subset\setA_{\delta}\cap\bar{\setB}_{\nu_{k}}\subset\cdots
\subset \setA_{\delta}\cap\bar{\setB}_{\nu_{0}}, \nu_0=\nu,~\mbox{and} \lim_{k\rightarrow\infty}\nu_k=0.
\]
By the fact that $\bar{\setB}_0=\setX_{\text{\tiny OP}}^*$, we have that
\[
\setA_{\delta}\cap\bar{\setB}_{\nu_{k}}\neq\emptyset, \forall k\in\setN, \mbox{ and }\lim_{k\rightarrow\infty}\bar{\setB}_{\nu_k}=\setX_{\text{\tiny OP}}^*.
\]
Suppose that, for every $k\in\setN$, there is $x_k\in\bar{\setB}_{\nu_k}$, and $x_k\notin\setA_{\delta}$ or $\dist(x_k,\setX_{\text{\tiny OP}}^*)\geq\delta$. However, by the fact $\lim\limits_{k\rightarrow\infty}\bar{\setB}_{\nu_k}=\setX_{\text{\tiny OP}}^*$ and that $\phi$ is l.s.c., we have $\lim\limits_{k\rightarrow\infty}\dist(x_k,\setX_{\text{\tiny OP}}^*)=0$, which yields a contradiction with $\dist(x_k,\setX_{\text{\tiny OP}}^*)\geq\delta$, $\forall k\in\setN$.

Consequently, there is $\bar{k}\in\setN$ with $\nu_{\bar{k}}>0$, such that for every $x_{\bar{k}}\in\bar{\setB}_{\nu_{\bar{k}}}$, we have $x_{\bar{k}}\in\setA_{\delta}$, i.e., $\bar{\setB}_{\nu_{\bar{k}}}\subset\setA_{\delta}$ or $\setA_{\delta}\cap\bar{\setB}_{\nu_{\bar{k}}}=\bar{\setB}_{\nu_{\bar{k}}}$. 

Then, if there are $\nu>0$ and $\delta>0$ such that one of the uniform error bounds \{(u-KL), (u-SEB), (u-HEB)\} hold with $\dist(x,\setX_{\text{\tiny OP}}^*)<\delta$ and $x\in[\phi^*<\phi<\phi^*+\nu]$, then the error bound also holds on $[\phi^*\leq\phi<\phi^*+\nu']$ with $\nu'=\nu_{\bar{k}}>0$.
\end{proof}

Conversely, we have the following lemma.

\begin{lemma}\label{lemma:l-u}
Suppose Assumptions~\ref{assump1} and~\ref{assump2} hold for (OP). If one of the three level-set error bound conditions \{($\ell$-KL), ($\ell$-SEB), ($\ell$-QG)\} holds on $[\phi^*\leq\phi<\phi^*+\nu]$ with $c>0$, $\nu>0$, then there is $\delta>0$ and $\nu'>0$ such that the three uniform error bound conditions \{(u-KL), (u-SEB), (u-HEB)\} also hold on $\Omega=\setX_{\text{\tiny OP}}^*$ with these constant. Moreover, the level-set error bound conditions \{($\ell$-KL), ($\ell$-SEB), ($\ell$-QG)\} on $[\phi^*\leq\phi<\phi^*+\nu]$ are equivalent.
\end{lemma}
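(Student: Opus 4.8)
The plan is to exploit the defining feature of this setting, namely that $\Omega=\setX_{\text{\tiny OP}}^*=[\phi\leq\phi^*]$ and that $\phi$ takes the single value $\phi^*$ on $\Omega$. Under this identification, every uniform error bound on $\Omega$ collapses to a condition indexed by the single base value $\bar{\phi}=\phi^*$: for any $\bar{x}\in\Omega$ we have $\phi(\bar{x})=\phi^*$ and $[\phi\leq\bar{\phi}]=[\phi\leq\phi^*]=\Omega$, so the quantifier ``for all $\bar{x}\in\Omega$'' becomes vacuous and the defining inequality of each uniform bound reads verbatim as the corresponding level-set inequality, except for two differences. First, the uniform version additionally restricts to points with $\dist(x,\Omega)<\delta$; second, it uses the open stratum $[\phi^*<\phi<\phi^*+\nu]$ whereas the level-set version uses $[\phi^*\leq\phi<\phi^*+\nu]$. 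Both differences work in our favor: the level-set bound is assumed on the whole stratum, hence a fortiori on its intersection with any ball $\dist(x,\Omega)<\delta$, and the extra boundary layer $[\phi=\phi^*]=\Omega$ is harmless since there $\dist(x,\Omega)=0$ and $\phi(x)-\phi^*=0$, so every error-bound inequality holds trivially.

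First I would record the identifications above and then verify, term by term, that ($\ell$-SEB) (resp.\ ($\ell$-KL), ($\ell$-QG)) on $[\phi^*\leq\phi<\phi^*+\nu]$ yields (u-SEB) (resp.\ (u-KL), (u-HEB)) on $\Omega$ with any fixed $\delta>0$ and $\nu'=\nu$. The constants carry over directly: $\mu_2=c_2$ for SEB, and as reciprocals $\mu_1=1/c_3$, $\mu_3=1/c_5$ for KL and HEB. Invoking Proposition~2.2 of~\cite{WLLL23} — whose hypotheses hold because Assumption~\ref{assump1} makes $\phi=f+r$ weakly convex (as $f$ is $L$-weakly convex and $r$ is $\rho$-weakly convex, $\phi$ is $(L+\rho)$-weakly convex), Assumption~\ref{assump2} makes $\Omega$ compact, and $\phi\equiv\phi^*$ on $\Omega$ — then upgrades a single uniform bound to all three of (u-KL), (u-SEB), (u-HEB), which are mutually equivalent. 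This proves the first assertion starting from whichever level-set bound is assumed.

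For the ``moreover'' part, the equivalence of the three level-set bounds, I would close the loop through the uniform bounds. Beginning from whichever level-set bound is assumed, the previous paragraph produces the corresponding uniform bound, Proposition~2.2 of~\cite{WLLL23} promotes it to all three uniform bounds, and then Lemma~\ref{lemma:u-l} (the converse direction already established) returns all three level-set bounds on a possibly smaller stratum $[\phi^*\leq\phi<\phi^*+\nu']$ with $\nu'\leq\nu$. Hence any one of ($\ell$-KL), ($\ell$-SEB), ($\ell$-QG) forces the other two (on a possibly reduced level set), which is exactly the asserted equivalence; no new shrinking argument is needed beyond the one already packaged in Lemma~\ref{lemma:u-l}.

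The only genuine obstacle is the appeal to Proposition~2.2 of~\cite{WLLL23}, and the care required there is entirely in checking its three hypotheses (weak convexity, compactness of $\Omega$, and constancy of $\phi$ on $\Omega$), all of which follow cleanly from Assumptions~\ref{assump1} and~\ref{assump2}. Everything else reduces to matching inequalities under the identification $[\phi\leq\bar{\phi}]=\Omega$ and to citing the already-proved Lemma~\ref{lemma:u-l}; in particular the ``hard'' implications (from a local/subdifferential bound back to a growth estimate) are not re-derived here but are inherited from that equivalence of uniform bounds.
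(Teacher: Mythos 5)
Your proposal is correct and follows essentially the same route as the paper: collapse the quantifier over $\bar{x}\in\Omega=\setX_{\text{\tiny OP}}^*$ to the single base value $\phi^*$ so that the assumed level-set bound yields the matching uniform bound on $\Omega$ (with the reciprocal constants you list), then invoke Proposition 2.2 of~\cite{WLLL23} to get all three uniform bounds and Lemma~\ref{lemma:u-l} to return to the level-set bounds on a possibly smaller stratum, which gives the claimed equivalence. Your one deviation is a mild streamlining: you correctly note that the restriction $\dist(x,\Omega)<\delta$ only shrinks the set of test points, so $\nu'=\nu$ and an arbitrary $\delta>0$ suffice, whereas the paper first re-runs the shrinking construction from Lemma~\ref{lemma:u-l} to arrange $\setA_{\delta}\cap\setB_{\nu'}=\setB_{\nu'}$ — a step that is indeed superfluous in this direction.
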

\begin{proof} For given $\delta>0$, by the proofs for Lemma~\ref{lemma:u-l}, there exists $0<\nu'<\nu$ such that $\setA_{\delta}\cap\bar{\setB}_{\nu'}=\bar{\setB}_{\nu'}$. Thus $\setA_{\delta}\cap\setB_{\nu'}=\setB_{\nu'}=[\phi^*<\phi<\phi^*+\nu']$.

Since one of the level-set error bounds \{($\ell$-KL), ($\ell$-SEB), ($\ell$-QG)\} holds on $[\phi^*\leq\phi<\phi^*+\nu]$ and $0<\nu'<\nu$, then it also holds on $\Omega=\setX_{\text{\tiny OP}}^*$ with $x\in\setA_{\delta}\cap\setB_{\nu'}$. 
Moreover, by Proposition 2.2 of~\cite{WLLL23} and Lemma~\ref{lemma:u-l}, we arrive at the conclusion that the level-set error bounds \{($\ell$-KL), ($\ell$-SEB), ($\ell$-QG)\} are equivalent.
\end{proof}
By combining the results of Proposition~\ref{prop:beb}, Proposition~\ref{prop:beb-2}, Lemma~\ref{lemma:u-l}, Lemma~\ref{lemma:l-u} and Proposition 2.2 in~\cite{WLLL23}, we are in the position to depict the relationships among the level-set error bound conditions on $[\phi^*\leq\phi<\phi^*+\nu]$ in Figure~\ref{fig:2}. 
\begin{figure}[h]
\centering
\begin{center}
\scriptsize
		\tikzstyle{format}=[rectangle,draw,thin,fill=white]
		\tikzstyle{test}=[diamond,aspect=2,draw,thin]
		\tikzstyle{point}=[coordinate,on grid,]
\begin{tikzpicture}
[%?????????latex ?????
>=latex,
%???????????
node distance=5mm,
% hv path ???????????????????????????????vh ????skip loop ???
%???-??-??? vskip loop ?????-???-??
 ract/.style={draw=blue!50, fill=blue!5,rectangle,minimum size=6mm, very thick, font=\itshape, align=center},
 racc/.style={rectangle, align=center},
 ractm/.style={draw=red!100, fill=red!5,rectangle,minimum size=6mm, very thick, font=\itshape, align=center},
 cirl/.style={draw, fill=yellow!20,circle,   minimum size=6mm, very thick, font=\itshape, align=center},
 raco/.style={draw=green!500, fill=green!5,rectangle,rounded corners=2mm,  minimum size=6mm, very thick, font=\itshape, align=center},
 hv path/.style={to path={-| (\tikztotarget)}},
 vh path/.style={to path={|- (\tikztotarget)}},
 skip loop/.style={to path={-- ++(0,#1) -| (\tikztotarget)}},
 vskip loop/.style={to path={-- ++(#1,0) |- (\tikztotarget)}}]
       
        \node (a) [racc]{\baselineskip=3pt\small {\bf ($\ell$-SEB)}\\
        {\bf on $[\phi^*\leq\phi<\phi^*+\nu]$}};
         \node (ar) [racc, right of =a, xshift=40, yshift=8]{\baselineskip=3pt\tiny {Lemma~\ref{lemma:l-u}}};
        %\node (att) [racc, right of =a, xshift=40, yshift=8]{\baselineskip=3pt\tiny {\cite{LDZ23}}};
        \node (aa) [racc, above of =a, yshift=40]{\baselineskip=3pt\small {\bf (u-SEB)}\\
        {\bf on $\setX^*$}};
        \node (aat) [racc, above of =aa, xshift=60]{\baselineskip=3pt\tiny { Proposition 2.2 of~\cite{WLLL23}}};
        \node (at) [racc, below of =a, yshift=-35]{\baselineskip=3pt\tiny {Proposition~\ref{prop:beb}}};
        \node (aatt) [racc, below of =aa, xshift=-24, yshift=-15]{\baselineskip=3pt\tiny { Lemma~\ref{lemma:u-l}}};
        \node (aattt) [racc, below of =aa, xshift=20, yshift=-15]{\baselineskip=3pt\tiny { Lemma~\ref{lemma:l-u}}};
        \node (a0) [ractm, above of = aa,yshift=30]{\baselineskip=3pt\small {\bf Assumptions~\ref{assump1} and~\ref{assump2}}};
        \node (b) [racc, left of = a, xshift=-100]{\baselineskip=3pt\small {\bf ($\ell$-PEB)}\\
        {\bf on $[\phi^*\leq\phi<\phi^*+\nu]$}};
        \node (bt) [racc, right of =b, xshift=40, yshift=8]{\baselineskip=3pt\tiny {Proposition~\ref{prop:beb}}};
         \node (btt) [racc, left of =b, xshift=-50, yshift=8]{\baselineskip=3pt\tiny {Proposition~\ref{prop:beb-2}}};
         \node (bttt) [racc, below of =b, xshift=-24, yshift=-12]{\baselineskip=3pt\tiny {Proposition~\ref{prop:beb}}};
        \node (c) [racc, right of = a, xshift=100]{\baselineskip=3pt\small {\bf ($\ell$-KL)}\\
        {\bf on $[\phi^*\leq\phi<\phi^*+\nu]$}};
        \node (cr) [racc, right of =c, xshift=40, yshift=8]{\baselineskip=3pt\tiny {Lemma~\ref{lemma:l-u}}};
        \node (cc) [racc, above of = c, yshift=40]{\baselineskip=3pt\small {\bf (u-KL)}\\
        {\bf on $\setX^*$}};
        \node (cct) [racc, below of =cc, xshift=-24, yshift=-15]{\baselineskip=3pt\tiny { Lemma~\ref{lemma:u-l}}};
        \node (cctt) [racc, below of =cc, xshift=20, yshift=-15]{\baselineskip=3pt\tiny { Lemma~\ref{lemma:l-u}}};
         \node (ccttt) [racc, above of =cc, xshift=55]{\baselineskip=3pt\tiny { Proposition 2.2 of~\cite{WLLL23}}};
        \node (ccc) [racc, right of = cc, xshift=100]{\baselineskip=3pt\small {\bf (u-HEB)}\\
        {\bf on $\setX^*$}};
        \node (ccct) [racc, below of =ccc, xshift=-24, yshift=-15]{\baselineskip=3pt\tiny { Lemma~\ref{lemma:u-l}}};
        \node (ccctt) [racc, below of =ccc, xshift=20, yshift=-15]{\baselineskip=3pt\tiny { Lemma~\ref{lemma:l-u}}};
        \node (d) [racc, below of = b, yshift=-40]{\baselineskip=3pt\small {\bf ($\ell$-PGAP)}\\
        {\bf on $[\phi^*\leq\phi<\phi^*+\nu]$}};
        \node (e) [racc, right of = c, xshift=100]{\baselineskip=3pt\small {\bf ($\ell$-QG)}};
        \node (f) [racc, left of = aa, xshift=-100]{\baselineskip=3pt\small {\bf (RSI)}};
        \node (ft) [racc, below of =f, xshift=-24, yshift=-7]{\baselineskip=3pt\tiny { Proposition~\ref{prop:beb-2}}};
        \path %(a) edge[->] (b)
              %(b) edge[->] (a)
              (-1.6,-0.1) edge[<-] (-2.5,-0.1)
              (-1.6,0.1) edge[->] (-2.5,0.1)
              (1.6,2) edge[<-] (2.5,2)
              (1.6,1.8) edge[->] (2.5,1.8)
              (1.6,-0.1) edge[->] (2.5,-0.1)
              (1.6,0.1) edge[<-] (2.5,0.1)
              (5.4,2) edge[<-] (6.3,2)
              (5.4,1.8) edge[->] (6.3,1.8)
              (5.4,0.1) edge[<-] (6.3,0.1)
              (5.4,-0.1) edge[->] (6.3,-0.1)
              (b) edge[->] (d)
              (b) edge[-] (-7,0)
              (-7,-3) edge[-] (-7,0)
              (-0.075,1.5) edge[->] (-0.075,0.4)
              (0.075,1.5) edge[<-] (0.075,0.4)
              (3.925,1.5) edge[->] (3.925,0.4)
              (f) edge[->] (-4,0.7)
              (4.075,1.5) edge[<-] (4.075,0.4);
              %(cc) edge[->] (c)
              %(c) edge[->] (a);
        \path (d) edge[->, hv path] (c);
        \path (-7,-3) edge[->, hv path] (e);
        \path (7.95,0.35) edge[<-] (7.95,1.5);
        %\path (-7,0) edge[->, vh path] (e);
        \path (8.1,1.5) edge[<-] (8.1,0.35);
        \draw[dotted,very thick] (-1,1.3) -- (9.25,1.3);
        \draw[dotted,very thick] (-1,2.8) -- (9.25,2.8);
        \draw[dotted,very thick] (-1,1.3) -- (-1,2.8);
        \draw[dotted,very thick] (9.25,1.3) -- (9.25,2.8);
        \draw[dotted,very thick] (-6,0.6) -- (9.25,0.6);
        \draw[dotted,very thick] (-6,-0.6) -- (9.25,-0.6);
        \draw[dotted,very thick] (-6,0.6) -- (-6,-0.6);
        \draw[dotted,very thick] (9.25,0.6) -- (9.25,-0.6);
        %\path (-10.25,-1.7) edge[->,vh path] (-2.45,3.15);
\end{tikzpicture}
\caption{The relationships among the notions of the level-set error bounds on $[\phi^*\leq\phi <\phi^*+\nu]$}\label{fig:2}
\end{center}
\end{figure}

\begin{proposition}\label{prop:eb_set}
Suppose that $\phi$ is level-bounded, i.e., the set $\{x\in\RR^d : \phi(x)\leq r\}$ is bounded (possibly empty) for every $r\in\RR$. Furthermore, suppose that the value of $\min\limits_{x\in\overline{\setX}_{\text{\tiny OP}}\setminus\setX_{\text{\tiny OP}}^*}\phi(x)$ exists and there exists $\sigma>0$ such that $\min\limits_{x\in\overline{\setX}_{\text{\tiny OP}}\setminus\setX_{\text{\tiny OP}}^*}\phi(x)-\phi^*>\sigma$,
where $\overline{\setX}_{\text{\tiny OP}}$ is the set of all critical points and $\setX_{\text{\tiny OP}}^*$ is the set of global minimisers. Suppose that the ($\ell$-PEB) condition holds on $\setX_{\text{\tiny OP}}^*$ with $c_1>0$ and $0<\nu<\sigma$ and the function $\dist(0,\partial\phi)$ is lower semi-continuous at the level set $[\phi^*\leq\phi<\phi^*+\sigma]$. Then, the ($\ell$-PEB) condition also holds on the set $[\phi^*\leq\phi<\phi^*+\sigma]$.
\end{proposition}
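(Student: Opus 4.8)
The plan is to pass to the equivalent subdifferential form of the error bound and then argue separately on the inner level set, where the hypothesis already supplies the bound, and on the annular region between the two level sets, where I will show the subgradient stays uniformly bounded away from zero. By Proposition~\ref{prop:beb}(i), ($\ell$-PEB) and ($\ell$-SEB) are equivalent on any level set of the form $[\bar{\phi}\leq\phi<\bar{\phi}+\nu']$, so it suffices to establish ($\ell$-SEB) on $[\phi^*\leq\phi<\phi^*+\sigma]$; in particular the hypothesis that ($\ell$-PEB) holds on $[\phi^*\leq\phi<\phi^*+\nu]$ already yields ($\ell$-SEB) there with some constant $c_2>0$.

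Next I would split the target set as $[\phi^*\leq\phi<\phi^*+\sigma]=[\phi^*\leq\phi<\phi^*+\nu]\cup A$, where $A:=[\phi^*+\nu\leq\phi<\phi^*+\sigma]$ (this is a genuine decomposition since $\nu<\sigma$). The first piece is covered by the hypothesis, so the heart of the matter is the claim that $m:=\inf_{x\in A}\dist(0,\partial\phi(x))>0$. Granting this claim, level-boundedness makes $[\phi\leq\phi^*+\sigma]\supseteq A$ bounded, and since $\setX_{\text{\tiny OP}}^*=[\phi\leq\phi^*]\neq\emptyset$, the quantity $M:=\sup_{x\in A}\dist(x,[\phi\leq\phi^*])$ is finite. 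Hence for every $x\in A$ one has $\dist(x,[\phi\leq\phi^*])\leq M\leq (M/m)\,\dist(0,\partial\phi(x))$, which is ($\ell$-SEB) on $A$ with constant $M/m$. Taking $c:=\max\{c_2,M/m\}$ gives ($\ell$-SEB) on all of $[\phi^*\leq\phi<\phi^*+\sigma]$, and Proposition~\ref{prop:beb}(i) converts it back to ($\ell$-PEB), completing the argument.

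It remains to prove the claim $m>0$, and this is where I expect the main difficulty. I would argue by contradiction: if $m=0$, choose $x_k\in A$ with $\dist(0,\partial\phi(x_k))\to0$. Level-boundedness confines $\{x_k\}$ to a compact set, so after passing to a subsequence $x_k\to\bar{x}$. The assumed lower semicontinuity of $\dist(0,\partial\phi)$ on $[\phi^*\leq\phi<\phi^*+\sigma]$ forces $\dist(0,\partial\phi(\bar{x}))=0$, i.e.\ $\bar{x}\in\overline{\setX}_{\text{\tiny OP}}$ is a critical point, while lower semicontinuity of $\phi$ gives $\phi(\bar{x})\leq\liminf_k\phi(x_k)\leq\phi^*+\sigma$. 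Since every non-global critical point exceeds $\phi^*+\sigma$ by the standing gap assumption, $\bar{x}$ must be a global minimizer with $\phi(\bar{x})=\phi^*$.

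The delicate point — and the true crux — is to derive a contradiction from $x_k\to\bar{x}$ with $\phi(\bar{x})=\phi^*$ while $\phi(x_k)\geq\phi^*+\nu$. This cannot follow from lower semicontinuity of $\phi$ alone, which only controls $\phi$ from below in the limit; it requires that $\phi$ be continuous relative to its domain along $x_k\to\bar{x}$, so that $\phi(x_k)\to\phi(\bar{x})=\phi^*<\phi^*+\nu$, contradicting $\phi(x_k)\geq\phi^*+\nu$. In the composite model this is precisely the setting of (OP-$g_{\lambda}$): there $\phi=g_{\lambda}+\mathcal{I}_{\setX}$, the points $x_k$ and their limit $\bar{x}$ all lie in $\setX$, and $g_{\lambda}$ is continuous on $\setX$, so $\phi(x_k)=g_{\lambda}(x_k)\to g_{\lambda}(\bar{x})=\phi^*$ and the contradiction is immediate. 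I would therefore record continuity of $\phi$ relative to $\dom\phi$ as the property that powers the annular estimate. Equivalently, one may run the same compactness/contradiction argument with the continuous map $x\mapsto\|x-T_{\alpha}(x)\|$ in place of $\dist(0,\partial\phi)$, using that $T_{\alpha}$ is continuous and that $x=T_{\alpha}(x)$ holds iff $x$ is critical; this variant avoids invoking lower semicontinuity of $\dist(0,\partial\phi)$ but still needs the same continuity of $\phi$ to rule out a limiting global minimizer.
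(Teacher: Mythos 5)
Your proof follows the paper's own argument in all essentials: the same decomposition into $[\phi^*\leq\phi<\phi^*+\nu]$ and the annulus $[\phi^*+\nu\leq\phi<\phi^*+\sigma]$, the same uniform bound $\dist(x,\setX_{\text{\tiny OP}}^*)\leq M$ from level-boundedness, the same uniform positive lower bound on $\dist(0,\partial\phi)$ over the annulus derived from the critical-value gap plus lower semicontinuity, and the same $\max$ of the two constants at the end. The only cosmetic difference is that you pass between ($\ell$-SEB) and ($\ell$-PEB) by citing the equivalence in Proposition~\ref{prop:beb}(i), where the paper inlines statements (vi) and (vii) of Proposition~\ref{prop}; incidentally your constant $M/m$ is the right way up, whereas the paper's $\frac{\delta}{r'}$ is inverted (a typo for $\frac{r'}{\delta}$).

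The valuable part of your write-up is the crux you isolate, and you are right that it is a genuine issue: the paper's proof simply asserts that lower semicontinuity of $\dist(0,\partial\phi)$ yields $\delta>0$ on the annulus, but the annulus is bounded and not closed, and a minimizing sequence can converge to a point of $\setX_{\text{\tiny OP}}^*$, where $\dist(0,\partial\phi)$ vanishes and lower semicontinuity gives nothing — exactly the scenario you describe. Your patch, continuity of $\phi$ relative to $\dom\phi$, is correct and suffices for the intended application $\psi=g_{\lambda}+\mathcal{I}_{\setX}$, but it adds a hypothesis the proposition does not state. Note that the paper's standing framework already contains a cleaner fix: under Assumption~\ref{assump1} (implicitly in force, since ($\ell$-PEB) is phrased via $T_{\alpha}$), $\phi$ is $(L+\rho)$-weakly convex, and inequality~\eqref{eq:wcvx inequality} applied at $x_k$ with comparison point $\bar{x}$ gives $\phi^*=\phi(\bar{x})\geq\phi(x_k)-\|\xi_k\|\cdot\|\bar{x}-x_k\|-\frac{L+\rho}{2}\|\bar{x}-x_k\|^2\geq\phi^*+\nu-o(1)$ whenever $\xi_k\in\partial\phi(x_k)$ with $\|\xi_k\|\to0$ and $x_k\to\bar{x}\in\setX_{\text{\tiny OP}}^*$, a contradiction that needs no continuity of $\phi$ whatsoever. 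With that substitution your compactness argument closes the gap for the general proposition, not only for the gap-function instance; the same remark applies to your $\|x-T_{\alpha}(x)\|$ variant.
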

\begin{proof} For any $x\in[\phi^*+\nu\leq\phi<\phi^*+\sigma]$, by statement (iii) of Proposition~\ref{prop}, we have $T_{\alpha}(x)\in[\phi^*\leq\phi<\phi^*+\sigma]$.

\textit{Case 1: $\phi(T_{\alpha}(x))=\phi^*$.} Then, $T_{\alpha}(x)\in\setX_{\text{\tiny OP}}^*$, and trivially we have $\dist(x,\setX_{\text{\tiny OP}}^*)\leq\|x-T_{\alpha}(x)\|$; i.e., ($\ell$-PEB) indeed holds on $[\phi^*+\nu\leq\phi<\phi^*+\sigma]$. 

\textit{Case 2: $T_{\alpha}(x)\in[\phi^*<\phi<\phi^*+\sigma]$.} In this case, $\min\limits_{x\in\overline{\setX}_{\text{\tiny OP}}\setminus\setX_{\text{\tiny OP}}^*}\phi(x)-\phi^*>\sigma$ implies that there is no critical point in the set $[\phi^*+\nu<\phi<\phi^*+\sigma]$. By the lower semi-continuous property of $\dist(0,\partial\phi(x))$, we have $\delta>0$ such that $\dist(0,\partial\phi(x))\geq\delta>0$, $\forall x\in[\phi^*+\nu\leq\phi<\phi^*+\sigma]$. Since $\phi$ is level-bounded, the distance $\dist(x,\setX_{\text{\tiny OP}}^*)$ is bounded by $r'$ for any $x\in[\phi^*+\nu\leq\phi<\phi^*+\sigma]$. Thus, we have $\dist(x,\setX_{\text{\tiny OP}}^*)\leq\frac{\delta}{r'}\dist(0,\partial\phi(x))$ on $[\phi^*+\nu\leq\phi<\phi^*+\sigma]$. Additionally, since the ($\ell$-PEB) condition holds on $\setX_{\text{\tiny OP}}^*$ with $c_1>0$ and $0<\nu$, $\forall x\in[\phi^*\leq\phi<\phi^*+\nu]$, we have
\[
\begin{aligned}
\dist(x,\setX_{\text{\tiny OP}}^*)&\leq c_1\|x-T_{\alpha}(x)\|\\
                &\leq c_1\left(\frac{\alpha}{1-\alpha\rho}\right)\dist(0,\partial\phi(x)) \quad\mbox{(by statement (vi) of Proposition~\ref{prop})}
\end{aligned}
\]
Therefore, $\dist(x,\setX_{\text{\tiny OP}}^*)\leq c_2\dist(0,\partial\phi(x))$ with $c_2=\max\left\{\frac{\delta}{r'},c_1\left(\frac{\alpha}{1-\alpha\rho}\right)\right\}$ holds on $[\phi^*<\phi<\phi^*+\sigma]$. Furthermore, $\forall x\in[\phi^*\leq\phi<\phi^*+\sigma]$, we have
\[
\begin{aligned}
\dist(x,\setX_{\text{\tiny OP}}^*)&\leq\dist(T_{\alpha}(x),\setX_{\text{\tiny OP}}^*)+\|x-T_{\alpha}(x)\|\\
&\leq c_2\dist(0,\partial\phi(T_{\alpha}(x)))+\|x-T_{\alpha}(x)\|\\
&\leq\left(c_2(L+\frac{1}{\alpha})+1\right)\|x-T_{\alpha}(x)\|\quad\mbox{(by statement (vii) of Proposition~\ref{prop})},
\end{aligned}
\]
which leads to the conclusion that ($\ell$-PEB) holds on the set $[\phi^*\leq\phi<\phi^*+\sigma]$.
\end{proof}

\section{The Gap Function for (VIP) and Its Properties}\label{sec:gap_prop}

Let us return to the variational inequality problem~\eqref{Prob:VIP} that we set out to solve. Recall the gap function~\eqref{eq:SGF_1} defined for solving~\eqref{Prob:VIP}, and its associated smooth non-convex optimization problem~\eqref{Prob:OP}. In particular, recall that $g_{\lambda}(x)\ge 0$ for all $x\in\setX$, and $g_{\lambda}(x)= 0$ 
$\Longleftrightarrow
x\in \setX_{\text{\tiny VIP}}^*$. Per analysis in Section~\ref{sec:PG-OP}, we know that Assumption~\ref{assump1} for $\phi$ is a basic assumption for the convergence of the proximal gradient (Algorithm~\ref{alg:PGOP}) on (OP). If $g_{\lambda}(x)$ satisfies Assumption~\ref{assump1}, then the sequence $\{\dist(0,\partial\psi(x^k))\}$ generated by 
%PG method 
Algorithm~\ref{alg:PGOP} 
converges to $0$. However, Example~\ref{exp:nvip_1} shows that $x$ satisfying $0\in\partial\psi(x^k))$ does not necessarily guarantee 
$g_{\lambda}(x)=0$. In order to solve (VIP), we need to find a solution $x^*\in\setX_{\text{\tiny VIP}}^*$, or $g_{\lambda}(x^*)=0$ for some $x^*\in\setX$.

Now, Theorem~\ref{theo:n-s} shows that ($\ell$-PEB) is a necessary and sufficient condition for linear convergence of the sequence $\{\dist\left(x^{k},[\psi\leq\psi^*]\right)\}$ generated by Algorithm~\ref{alg:PGOP}, if initiated from a point sufficiently close to the solution set, which is a promising way to solve  
%Although this is a good way to find the solution of 
(OP-$g_{\lambda}$). In generally, however, it is still difficult at this point to identify if %a (VIP) whose 
the gap function $g_{\lambda}(x)$ for an VI problem satisfies the level-set error bound ($\ell$-PEB) on $[\psi^*\leq\psi<\psi^*+\nu]$ with some $\nu>0$ or not. The good news is: As we demonstrated in Subsection~\ref{sec:eb} that the combination of Assumption~\ref{assump2} with any one of the three uniform error bounds \{(u-KL), (u-SEB), (u-HEB)\} can guarantee ($\ell$-PEB) to hold.

Therefore, if Assumptions~\ref{assump1} and~\ref{assump2} hold on the problem (OP-$g_{\lambda}$), moreover if one of the three uniform error bounds \{(u-KL), (u-SEB), (u-HEB)\} holds on (OP-$g_{\lambda}$), then Lemma~\ref{lemma:u-l} and Theorem~\ref{theo:n-s} give rise to the following result.

\begin{theorem}[linear convergence of PG based on smooth gap function to solve (VIP)]\label{theo:n-s-VIP}
Suppose that Assumptions~\ref{assump1} and~\ref{assump2} hold for (OP-$g_{\lambda}$). Moreover, suppose one of the three uniform error bounds \{(u-KL), (u-SEB), (u-HEB)\} holds. Let a sequence $\{x^k\}$ be generated by Algorithm~\ref{alg:PGOP}. Then, there exists positive number $\nu>0$ such that for any initial point $x^0\in[\psi^*<\psi<\psi^*+\nu]$, Algorithm~\ref{alg:PGOP} converges linearly with respect to level-set $[\psi\leq\psi^*]$; i.e., there is $\beta\in(0,1)$ such that
    \[
    \dist(x^{k+1},[\psi\leq\psi^*])\leq\beta\dist(x^k,[\psi\leq\psi^*]),\quad\forall k\geq0.
    \]
\end{theorem}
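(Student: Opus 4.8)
The plan is to recognize (OP-$g_{\lambda}$) as a special instance of the composite model (OP) and then chain together the three earlier results---Lemma~\ref{lemma:u-l}, Proposition~\ref{prop:beb}, and Theorem~\ref{theo:n-s}---each of which has already carried out the substantive work. First I would make the identification $f=g_{\lambda}$ and $r=\mathcal{I}_{\setX}$, so that $\phi=\psi=g_{\lambda}+\mathcal{I}_{\setX}$ is precisely the objective of (OP-$g_{\lambda}$). Under the stated hypotheses Assumption~\ref{assump1} holds (indeed $r=\mathcal{I}_{\setX}$ is convex, so $\rho=0$ is admissible), and Assumption~\ref{assump2} guarantees that $\setX_{\text{\tiny OP}}^*=\setX_{g_{\lambda}}^*=\setX_{\text{\tiny VIP}}^*$ is compact. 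Crucially, $\psi\equiv 0$ on this set, since $g_{\lambda}(x)=0$ exactly when $x\in\setX_{\text{\tiny VIP}}^*$ and $\mathcal{I}_{\setX}=0$ on $\setX$; thus $\psi$ takes the constant value $\psi^*=0$ on $\Omega=\setX_{\text{\tiny OP}}^*$, which is exactly the data required by Lemma~\ref{lemma:u-l} (and by Proposition 2.2 of~\cite{WLLL23} invoked therein).

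Next, since one of the three uniform error bounds \{(u-KL), (u-SEB), (u-HEB)\} is assumed to hold on $\Omega=\setX_{\text{\tiny OP}}^*$, Lemma~\ref{lemma:u-l} produces a $\nu'>0$ on which all three level-set error bounds \{($\ell$-KL), ($\ell$-SEB), ($\ell$-QG)\} hold on $[\psi^*\leq\psi<\psi^*+\nu']$ with the same constant. In particular ($\ell$-SEB) holds there, and by the equivalence in Proposition~\ref{prop:beb}(i) this yields ($\ell$-PEB) on $[\psi^*\leq\psi<\psi^*+\nu']$ with some constant $c_1>0$.

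Finally I would invoke the sufficiency direction, Theorem~\ref{theo:n-s}(i): with ($\ell$-PEB) established on $[\psi^*\leq\psi<\psi^*+\nu']$, any initial point $x^0\in[\psi^*<\psi<\psi^*+\nu']$ together with any stepsize $\alpha\in\left(0,\frac{1}{(c_1^2+1)(L+\rho)}\right)$ gives the claimed $Q$-linear contraction $\dist(x^{k+1},[\psi\leq\psi^*])\leq\beta\,\dist(x^k,[\psi\leq\psi^*])$ for some $\beta\in(0,1)$. Setting $\nu=\nu'$ then delivers the statement.

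The content is entirely inherited from the earlier machinery, so the only care needed is bookkeeping rather than new estimates. The main thing to verify is that the composite data $(g_{\lambda},\mathcal{I}_{\setX})$ genuinely meets the hypotheses of each invoked result---in particular that the optimal value $\psi^*=0$ is constant on the compact solution set, as demanded by the uniform-error-bound framework---and that the ``one of'' disjunction in Lemma~\ref{lemma:u-l}, combined with the equivalence in Proposition~\ref{prop:beb}(i), suffices to produce ($\ell$-PEB) \emph{specifically} (rather than some other member of the level-set family), since ($\ell$-PEB) is what Theorem~\ref{theo:n-s} requires as input. No new inequalities are generated in this argument.
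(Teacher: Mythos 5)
Your proposal is correct and follows essentially the same route as the paper, whose proof is precisely the one-line combination of Lemma~\ref{lemma:u-l}, Proposition~\ref{prop:beb}, and Theorem~\ref{theo:n-s}; you simply make explicit the bookkeeping the paper leaves implicit (the identification $f=g_{\lambda}$, $r=\mathcal{I}_{\setX}$ with $\rho=0$, the constancy $\psi^*=0$ on the compact solution set needed for the uniform-error-bound equivalence, and the passage from ($\ell$-SEB) to ($\ell$-PEB) via Proposition~\ref{prop:beb}(i)). No gaps.
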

\begin{proof}
The result follows by combining %Proposition~\ref{prop:suff-HEB}, 
Lemma~\ref{lemma:u-l}, Proposition~\ref{prop:beb}, 
and Theorem~\ref{theo:n-s}.
\end{proof}
By the results of Theorem~\ref{theo:n-s-VIP}, we conclude that the following three are the basic assumptions to guarantee local linear convergence of Algorithm~\ref{alg:PGOP} %PG 
to a solution of (VIP): 
\begin{itemize}
    \item[{\rm 1.}] $g_{\lambda}(x)$ satisfies Assumption~\ref{assump1};
    \item[{\rm 2.}] The solution set of (VIP) satisfy Assumption~\ref{assump2};
    \item[{\rm 3.}] One of the three uniform error bounds \{(u-KL), (u-SEB), (u-HEB)\} holds.    
\end{itemize}
In the next section, we will expound on which type of VI problems will satisfy the three aforementioned conditions.

\section{Identifying New Classes of (VIP) That Can Be Solved via Proximal Gradient Methods on Its Gap Functions}\label{sec:linear-VIP}

In this section, we will propose some sufficient conditions of the gradient Lipschitz of $g_{\lambda}(x)$ in Subsection~\ref{app:suff-Lips-gap}. The sufficient conditions for the compactness of the solution set of (VIP) will be proposed in Subsection~\ref{app:suff-compact}. The sufficient conditions for one of the three error bounds \{(u-KL), (u-SEB), (u-HEB)\} will be illustrated in Subsection~\ref{subsec:suff_ebs}.
The results from the properties of (VIP) to the linear convergence of PG for solving (VIP) are concluded in Figure~\ref{fig:3}. 
\begin{figure}
\centering
\begin{center}
\scriptsize
		\tikzstyle{format}=[rectangle,draw,thin,fill=white]
		\tikzstyle{test}=[diamond,aspect=2,draw,thin]
		\tikzstyle{point}=[coordinate,on grid,]
\begin{tikzpicture}
[%?????????latex ?????
>=latex,
%???????????
node distance=5mm,
% hv path ???????????????????????????????vh ????skip loop ???
%???-??-??? vskip loop ?????-???-??
 ract/.style={draw=blue!50, fill=blue!5,rectangle,minimum size=6mm, very thick, font=\itshape, align=center},
 racc/.style={rectangle, align=center},
 ractm/.style={draw=red!100, fill=red!5,rectangle,minimum size=6mm, very thick, font=\itshape, align=center},
 cirl/.style={draw, fill=yellow!20,circle,   minimum size=6mm, very thick, font=\itshape, align=center},
 raco/.style={draw=green!500, fill=green!5,rectangle,rounded corners=2mm,  minimum size=6mm, very thick, font=\itshape, align=center},
 hv path/.style={to path={-| (\tikztotarget)}},
 vh path/.style={to path={|- (\tikztotarget)}},
 skip loop/.style={to path={-- ++(0,#1) -| (\tikztotarget)}},
 vskip loop/.style={to path={-- ++(#1,0) |- (\tikztotarget)}}]

        %\node (b) [racc]{\baselineskip=3pt\small {\bf Characters}\\{\bf of (S-VIP)}};
        \node (a) [racc]{\baselineskip=3pt\small {\bf Properties}\\{\bf of (VIP)}};
        \node (c) [racc, right of = a, xshift=115]{\baselineskip=3pt\small {\bf Characters}\\{\bf of (OP-$g_{\lambda}$)}};
        \node (c1) [racc, right of = c, xshift=115]{\baselineskip=3pt\small {\bf Error bounds for}\\{\bf $\psi(x)=g_{\lambda}(x)+\mathcal{I}_{\setX}(x)$}};
        %\node (bb) [racc, below of = b, yshift=-20]
        %{\baselineskip=3pt\small { $g_{\lambda}$ satisfies}
        %\\ { Assumption~\ref{assump1}}};
        \node (aa) [racc, below of = a, yshift=-20]
        {\baselineskip=3pt\small {Proposition~\ref{prop:suff-grad-Lips}}};
        \node (cc) [racc, below of = c, yshift=-20]
        {\baselineskip=3pt\small { $g_{\lambda}$ weakly convex}
        \\ { gradient Lipschitz}};
        %\node (bbb) [racc, below of = bb, yshift=-20]
        %{\baselineskip=3pt\small { (S-VIP)}};
        \node (aaa) [racc, below of = aa, yshift=-20]
        {\baselineskip=3pt\small {Theorem~\ref{prop:suff-HEB}}};
        \node (ccc) [racc, below of = cc, yshift=-20]
        {\baselineskip=3pt\small {\scriptsize\{(u-KL), (u-SEB), (u-HEB)\}} \\
        {hold on} {$\Omega=\setX_{g_{\lambda}}^*$}};
        %\node (bbbb) [racc, below of = bbb, yshift=-20]
        %{\baselineskip=3pt\small { (S-VIP) satisfies}\\
        %{Assumption~\ref{assump2}}};
        \node (aaaa) [racc, below of = aaa, yshift=-20]
        {\baselineskip=3pt\small {Proposition~\ref{prop:suff-compact-1}}};
        \node (cccc) [racc, below of = ccc, yshift=-20]
        {\baselineskip=3pt\small { $\setX_{g_{\lambda}}^*$ nonempty}\\
        {and compact}};
        \node (d2) [racc, right of = ccc, xshift=115]
        {\baselineskip=3pt\small {($\ell$-PEB)} {holds on}\\
        {$[\psi^*\leq\psi<\psi^*+\nu]$}};
        \node (e) [racc, right of = ccc, xshift=215]
        {\baselineskip=3pt\small {linear}\\ {convergence}\\
        {of PG}\\ {for (OP-$g_{\lambda}$)}};
        %\node (b) [racc, left of = a, xshift=-40]{\baselineskip=3pt\small {\bf ($\ell$-PEB)}};
        %\node (c) [racc, right of = a, xshift=40]{\baselineskip=3pt\small {\bf ($\ell$-KL)}};
        %\node (d) [racc, below of = b, yshift=-20]{\baselineskip=3pt\small {\bf ($\ell$-PGAP)}};
        %\node (e) [racc, below of = d, yshift=-20]{\baselineskip=3pt\small {\bf ($\ell$-HEB)}};
        \path (10.45,-2.4) edge[->] (11.55,-2.4)
              %(-2.6,-1.1) edge[->] (-1.25,-1.1)
              (1.4,-1.1) edge[->] (2.55,-1.1)
              %(aa) edge[->] (cc)
              %(bbb) edge[->] (aaa)
              %(-2.6,-2.4) edge[->] (-1.25,-2.4)
              (1.4,-2.4) edge[->] (2.55,-2.4)
              
              %(-2.6,-3.7) edge[->] (-1.25,-3.7)
              (1.4,-3.7) edge[->] (2.55,-3.7)
              %(aaa) edge[->] (ccc)
              %(bbbb) edge[->] (aaaa)
              %(aaaa) edge[->] (cccc)
              (6.45, -2.4) edge[->] (7.75, -2.4);
              %(-0.7,-0.1) edge[<-] (-1.15,-0.1)
              %(-0.7,0.1) edge[->] (-1.15,0.1)
              %(b) edge[->] (d);
         %     (b) edge[-] (-3.5,0);
        \path (6.45, -3.7) edge[->, hv path] (9.1,-2.9);
        \path (6.45, -1.1) edge[-, hv path] (10.85,-2.4);
        %\draw[very thin] (-5.4,-0.8) -- (-2.6,-0.8);
        \draw[very thin] (1.4,-0.8) -- (-1.35,-0.8);
        \draw[very thin] (2.55,-0.8) -- (6.45,-0.8);

        %\draw[very thin] (-5.4,-0.8) -- (-5.4,-1.6);
        %\draw[very thin] (-2.6,-0.8) -- (-2.6,-1.6);
        \draw[very thin] (1.4,-0.8) -- (1.4,-1.6);
        \draw[very thin] (-1.35,-0.8) -- (-1.35,-1.6);
        \draw[very thin] (2.55,-0.8) -- (2.55,-1.6);
        \draw[very thin] (6.45,-0.8) -- (6.45,-1.6);
        
        %\draw[very thin] (-5.4,-1.6) -- (-2.6,-1.6);
        \draw[very thin] (1.4,-1.6) -- (-1.35,-1.6);
        \draw[very thin] (2.55,-1.6) -- (6.45,-1.6);
        
        %\draw[very thin] (-5.4,-2) -- (-2.6,-2);
         \draw[very thin] (1.4,-2) -- (-1.35,-2);
         \draw[very thin] (2.55,-2) -- (6.45,-2);
 
         %\draw[very thin] (-5.4,-2) -- (-5.4,-2.8);
         %\draw[very thin] (-2.6,-2) -- (-2.6,-2.8);
         \draw[very thin] (1.4,-2) -- (-1.35,-2);
         \draw[very thin] (1.4,-2) -- (1.4,-2.8);
         \draw[very thin] (-1.35,-2) -- (-1.35,-2.8);
         \draw[very thin] (2.55,-2) -- (6.45,-2);
         \draw[very thin] (2.55,-2) -- (2.55,-2.8);
         \draw[very thin] (6.45,-2) -- (6.45,-2.8);
         
         %\draw[very thin] (-5.4,-2.8) -- (-2.6,-2.8);
        \draw[very thin] (1.4,-2.8) -- (-1.35,-2.8);
        \draw[very thin] (2.55,-2.8) -- (6.45,-2.8);
        
        %\draw[very thin] (-5.4,-3.2) -- (-2.6,-3.2);
        \draw[very thin] (1.4,-3.2) -- (-1.35,-3.2);
        \draw[very thin] (2.55,-3.2) -- (6.45,-3.2);

        %\draw[very thin] (-5.4,-3.2) -- (-5.4,-4);
        %\draw[very thin] (-2.6,-3.2) -- (-2.6,-4);
        \draw[very thin] (1.4,-3.2) -- (-1.35,-3.2);
        \draw[very thin] (1.4,-3.2) -- (1.4,-4);
        \draw[very thin] (-1.35,-3.2) -- (-1.35,-4);
        \draw[very thin] (2.55,-3.2) -- (6.45,-3.2);
        \draw[very thin] (2.55,-3.2) -- (2.55,-4);
        \draw[very thin] (6.45,-3.2) -- (6.45,-4);
        
        %\draw[very thin] (-5.4,-4) -- (-2.6,-4);
        \draw[very thin] (1.4,-4) -- (-1.35,-4);
        \draw[very thin] (2.55,-4) -- (6.45,-4);

        \draw[very thin] (7.75,-1.9) -- (7.75,-2.9);
        \draw[very thin] (10.45,-1.9) -- (10.45,-2.9);
        \draw[very thin] (7.75,-1.9) -- (10.45,-1.9);
        \draw[very thin] (7.75,-2.9) -- (10.45,-2.9);
        
        \draw[very thin] (11.55,-1.5) -- (11.55,-3.4);
        \draw[very thin] (13.55,-1.5) -- (13.55,-3.4);
        \draw[very thin] (11.55,-1.5) -- (13.55,-1.5);
        \draw[very thin] (11.55,-3.4) -- (13.55,-3.4);
        
        %\path (-3.5,0) edge[->, vh path] (e);
        %\path (-10.25,-1.7) edge[->,vh path] (-2.45,3.15);
\end{tikzpicture}
\caption{From the properties of (VIP) to the linear convergence of PG for solving (S-VIP)}\label{fig:3}
\end{center}
\end{figure}
%The following two subsection provide the sufficient conditions of Assumptions~\ref{assump1} and~\ref{assump2}.

\subsection{Sufficient conditions leading to  Assumption~\ref{assump1}}\label{app:suff-Lips-gap}
The following proposition provides the sufficient conditions for gradient Lipschitz of smooth gap function $g_{\lambda}(x)$ and $g_{\lambda}(x)$ is weakly convex on $\setX$.
\begin{proposition}\label{prop:suff-grad-Lips}
Let $g_{\lambda}(x)$ be defined by~\eqref{eq:SGF_1}. Then $y_{\lambda}(x)$ is single-valued also and the following assertions hold:

{\rm(i)} If $F$ and $\nabla F$ are continuous on compact set $\setX$, then $\nabla g_{\lambda}(x)$ is Lipschitz continuous on $\setX$.

{\rm(ii)} If $\nabla F$ and $F$ are Lipschitz continuous on $\setX$, then so is $\nabla g_{\lambda}(x)$.
\end{proposition}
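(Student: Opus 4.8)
The plan is to reduce everything to the closed-form gradient \eqref{eq:grad_g} together with the projection representation $y_{\lambda}(x)=\proj_{\setX}(x-\lambda F(x))$ from \eqref{eq:yx}, exploiting two structural properties of the metric projection onto the nonempty closed convex set $\setX$: it is single-valued and nonexpansive. Single-valuedness of $y_{\lambda}$ follows at once, settling the first assertion. Nonexpansiveness then yields the two estimates that carry the whole argument. First,
\[
\|y_{\lambda}(x)-y_{\lambda}(x')\|\le\|(x-\lambda F(x))-(x'-\lambda F(x'))\|\le\|x-x'\|+\lambda\|F(x)-F(x')\|,
\]
so $y_{\lambda}$, and hence the residual $x\mapsto x-y_{\lambda}(x)$, is Lipschitz whenever $F$ is. Second, since $x=\proj_{\setX}(x)$ for $x\in\setX$,
\[
\|x-y_{\lambda}(x)\|=\|\proj_{\setX}(x)-\proj_{\setX}(x-\lambda F(x))\|\le\lambda\|F(x)\|,
\]
which bounds the residual by $\|F(x)\|$.

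Next I would record the Lipschitz and boundedness constants supplied by each hypothesis. Under (i), $\setX$ is compact and $F,\nabla F$ are continuous, hence bounded on $\setX$; moreover, being $C^1$ on the compact convex set $\setX$, $F$ is Lipschitz with constant $\sup_{\setX}\|\nabla F\|$. Under (ii), Lipschitz continuity of $F$ already forces $\|\nabla F(x)\|\le L_F$ (the Jacobian operator norm is dominated by the Lipschitz constant of $F$), and $\nabla F$ is Lipschitz by assumption. In either case the residual estimate above controls $\|x-y_{\lambda}(x)\|$.

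The core step is to bound $\nabla g_{\lambda}(x)-\nabla g_{\lambda}(x')$ by splitting \eqref{eq:grad_g} into its three summands. The term $F(x)-F(x')$ is Lipschitz by hypothesis, and the term $\tfrac1\lambda[(y_{\lambda}(x)-x)-(y_{\lambda}(x')-x')]$ is Lipschitz because the residual is. The delicate summand is the bilinear term $\nabla F(x)^{\top}[x-y_{\lambda}(x)]$, which I would treat by adding and subtracting $\nabla F(x)^{\top}[x'-y_{\lambda}(x')]$ to obtain
\[
\nabla F(x)^{\top}\big[(x-y_{\lambda}(x))-(x'-y_{\lambda}(x'))\big]+\big[\nabla F(x)-\nabla F(x')\big]^{\top}\big[x'-y_{\lambda}(x')\big].
\]
The first piece is dominated by $\|\nabla F(x)\|$ (bounded) times the Lipschitz increment of the residual, and the second by the increment of $\nabla F$ times $\|x'-y_{\lambda}(x')\|$; collecting constants then yields a single Lipschitz constant for $\nabla g_{\lambda}$.

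I expect this bilinear term to be the main obstacle: to render a product Lipschitz one needs each factor to be simultaneously Lipschitz and uniformly bounded, and the only nonautomatic boundedness is that of the residual $x-y_{\lambda}(x)$, which is bounded by $\lambda\|F(x)\|$. This is exactly where compactness does the essential work in (i), bounding $\|F\|$ and $\|\nabla F\|$ uniformly so that both pieces above are linear in $\|x-x'\|$. In (ii) the corresponding role must be played by the a priori bound $\|\nabla F\|\le L_F$ together with control of $\|x'-y_{\lambda}(x')\|$ on the region where the estimate is applied; checking that this residual boundedness genuinely persists—immediate, for instance, when $\nabla F$ is constant, as for affine $F$—is the delicate point of the argument.
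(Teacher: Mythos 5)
Your overall route coincides with the paper's: single-valuedness of $y_{\lambda}$ from the projection formula \eqref{eq:yx}, Lipschitz continuity of $y_{\lambda}$ (hence of the residual $x-y_{\lambda}(x)$) from nonexpansiveness of $\proj_{\setX}$, and then a term-by-term estimate of \eqref{eq:grad_g} with the add-and-subtract splitting of the bilinear term; you are in fact more explicit than the paper, whose proof compresses the last step into ``by \eqref{eq:grad_g}''. The genuine gap is in your handling of the second piece $[\nabla F(x)-\nabla F(x')]^{\top}[x'-y_{\lambda}(x')]$ under the hypotheses of (i). You assert that compactness, by ``bounding $\|F\|$ and $\|\nabla F\|$ uniformly'', makes both pieces linear in $\|x-x'\|$; but a uniform bound on $\|\nabla F\|$ controls only the size of each factor, not the increment $\|\nabla F(x)-\nabla F(x')\|$, and continuity of $\nabla F$ on a compact set yields uniform continuity, not Lipschitz continuity. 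Where the residual is bounded away from zero, this piece inherits exactly the modulus of continuity of $\nabla F$: with $d=1$, $\setX=[0,1]$, $\lambda=1$, $F'(x)=\sqrt{|x-1/2|}$ and $F$ shifted upward so that $y_{\lambda}\equiv0$ on $[0,1]$, formula \eqref{eq:grad_g} gives $\nabla g_{\lambda}(x)=F(x)+x\sqrt{|x-1/2|}-x$, which is H\"older but not Lipschitz near $x=1/2$. So your step needs the additional hypothesis that $\nabla F$ is Lipschitz; mere continuity on a compact $\setX$ does not suffice. To be fair, the paper's own proof of (i) never addresses the increment of $\nabla F$ at all, so you have faithfully reproduced its lacuna rather than repaired it.

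For (ii) your closing caveat is exactly right and is in fact the crux, not a side remark: since $\nabla F$ is now assumed Lipschitz, the second piece is controlled precisely when $\sup_{x\in\setX}\|x-y_{\lambda}(x)\|<\infty$, and your bound $\|x-y_{\lambda}(x)\|\leq\lambda\|F(x)\|$ does not deliver this on an unbounded $\setX$. It can genuinely fail: for $\setX=\RR^d$ one has $y_{\lambda}(x)=x-\lambda F(x)$, hence \eqref{eq:grad_g} collapses to $\nabla g_{\lambda}(x)=\lambda\nabla F(x)^{\top}F(x)$, and the choice $d=1$, $F(x)=x+\sin x$ (for which $\setX_{\text{\tiny VIP}}^*=\{0\}$, so the paper's standing assumption holds) makes $F$ and $F'$ globally Lipschitz while $\nabla g_{\lambda}(x)=\lambda(1+\cos x)(x+\sin x)$ has unbounded derivative and is not Lipschitz on $\RR$. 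Thus (ii) as you argue it closes only with an extra ingredient---boundedness of $\setX$ or of $F$ on $\setX$, or constancy of $\nabla F$ (the affine case, where the second piece vanishes identically and your argument goes through)---and the paper's two-line proof of (ii), which asserts the conclusion ``by \eqref{eq:grad_g}'' after noting that $\|\nabla F\|$ is bounded and the residual is Lipschitz, silently assumes the same residual boundedness. Your proposal is the more candid document, but it identifies this step rather than closing it, so as a freestanding proof it remains incomplete at exactly the two points where the paper's proof is also incomplete.
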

\begin{proof}
{\rm(i)} First, by the boundedness of $\setX$ and the mean value theorem, $F$ is Lipschitz continuous on $\setX$.
Secondly, since projection mappings are non-expansive, we have $\|y_{\lambda}(x)-y_{\lambda}(x')\|\leq\|\left(x-\lambda F(x)\right)-\left(x'-\lambda F(x')\right)\|$, which implies the mapping $y$ is Lipschitz continuous on $\setX$.
Finally, it follows from the Lipschitz continuous of $F$ that $\|\nabla F\|$ is bounded on $\setX$, and from~\eqref{eq:grad_g} we obtain that $\nabla g_{\lambda}(x)$ is Lipschitz continuous on $\setX$.

{\rm(ii)} Similar as in the proof of statement (i), $\|\nabla F\|$ is bounded on $\setX$, and the mapping $x-y_{\lambda}(x)$ is Lipschitz continuous on $\setX$. Again, by~\eqref{eq:grad_g} it follows that $\nabla g_{\lambda}(x)$ is Lipschitz continuous on $\setX$.
\end{proof}

\subsection{Sufficient conditions leading to Assumption~\ref{assump2}}\label{app:suff-compact}
The following propositions provide some sufficient conditions on the compactness of $\setX_{\text{\tiny VIP}}^*$.
\begin{proposition}[Theorem 3.2-Theorem 3.4 of~\cite{HP90}]\label{prop:suff-compact-1} Let $\setX$ be a nonempty, closed, convex subset of $\RR^d$ and let $F$ be a continuous mapping from $\setX$ into $\RR^d$. Then the following statements hold:

{\rm(i)} If $F$ is coercive with respect to $\setX$, then the problem (VIP) has a nonempty, compact solution set $\setX_{\text{\tiny VIP}}^*$. 

{\rm(ii)} If there exists a vector $x^0\in\setX$ such that the set $\{x\in\setX:\langle F(x),x-x^0\rangle<0\}$ if nonempty, is bounded, then the problem (VIP) has a solution. Moreover, if the closed set $\{x\in\setX:\langle F(x),x-x^0\rangle\leq0\}$ is bounded, then the solution set $\setX_{\text{\tiny VIP}}^*$ is compact. 

{\rm(iii)} Suppose that $F$ is pseudo-monotone with respect to $\setX$ and that there exists a vector $x^0\in\setX$ such that $F(x^0)\in\interior(\setX^*)$, where $\interior(\cdot)$ denotes the interior of the set. Then, problem (VIP) has a nonempty, compact, convex solution set $\setX_{\text{\tiny VIP}}^*$.  
\end{proposition}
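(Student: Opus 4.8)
The three statements share a common engine, namely the Hartman–Stampacchia existence theorem, which guarantees that a continuous map admits a (VIP) solution on any nonempty compact convex set. The plan is to combine this with a truncation argument to cope with the unbounded domain $\setX$, and to invoke pseudo-monotonicity separately to secure convexity in (iii). Concretely, I would fix $x^0\in\setX$ and introduce the truncated feasible sets $\setX_r:=\setX\cap\{x\in\RR^d:\|x-x^0\|\le r\}$, each compact and convex, and apply Hartman–Stampacchia to obtain a solution $x_r$ of the (VIP) restricted to $\setX_r$. The key lever is a \emph{truncation lemma}: whenever $\|x_r-x^0\|<r$, the point $x_r$ already solves the full (VIP), because for any $y\in\setX$ the segment point $x_r+t(y-x_r)$ lies in $\setX_r$ for all small $t>0$, and dividing the restricted inequality by $t$ yields $\langle F(x_r),y-x_r\rangle\ge0$.

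For statement (ii) I would argue by contradiction. If no $x_r$ lands strictly inside the ball, then $\|x_r-x^0\|=r$ for every $r$; testing the restricted inequality against $y=x^0$ gives $\langle F(x_r),x_r-x^0\rangle\le0$, so $x_r$ lies in the level set $L:=\{x\in\setX:\langle F(x),x-x^0\rangle\le0\}$. Since $L$ is bounded, $\|x_r-x^0\|$ cannot equal $r$ once $r>\sup_{x\in L}\|x-x^0\|$, a contradiction, which forces a genuine solution to exist. Compactness then follows because any solution $x^*$ satisfies $\langle F(x^*),x^0-x^*\rangle\ge0$, hence $\setX_{\text{\tiny VIP}}^*\subseteq L$, and $\setX_{\text{\tiny VIP}}^*$ is closed by continuity of $F$, i.e.\ a closed subset of the bounded set $L$. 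Statement (i) is then immediate: coercivity of $F$ with respect to $\setX$ yields superlinear growth of $\langle F(x),x-y\rangle$ as $\|x\|\to\infty$, which makes the level set $L$ (with $x^0=y$) bounded, so (i) reduces to (ii).

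For statement (iii) I would first use pseudo-monotonicity to identify the solution set with the Minty solution set, $\setX_{\text{\tiny VIP}}^*=\{x^*\in\setX:\langle F(y),y-x^*\rangle\ge0\ \forall y\in\setX\}$. The inclusion ``$\subseteq$'' is precisely the pseudo-monotone implication applied at each $y$, while ``$\supseteq$'' follows from the standard Minty line-segment trick: for a Minty point $x^*$ and fixed $y$, apply the defining inequality along $x^*+t(y-x^*)$ and let $t\downarrow0$, invoking continuity of $F$. Written in this form the solution set is an intersection of closed half-spaces intersected with $\setX$, hence closed and convex. Nonemptiness and boundedness are then supplied by the interior condition $F(x^0)\in\interior(\setX^*)$, which acts as a coercivity certificate confining the solutions to a bounded region and thereby reduces the remaining argument to the level-set reasoning already used in (i)–(ii).

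The main obstacle I anticipate is the boundary analysis inside the truncation step, namely certifying that the truncated solutions eventually migrate strictly into the interior of the ball rather than sticking to $\|x-x^0\|=r$; the contradiction with the boundedness of $L$ must be made quantitative and uniform in $r$. The second delicate point is in (iii), where the dual-cone interior condition $F(x^0)\in\interior(\setX^*)$ must be correctly translated into the coercive growth of $\langle F(x),x-x^0\rangle$ that keeps $L$ bounded; this is the step most sensitive to the precise convention for $\setX^*$. Both technical points are carried out in detail in \cite{HP90}, and the remaining manipulations are routine.
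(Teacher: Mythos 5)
The paper offers no proof of this proposition at all---it is quoted, with attribution, from Theorems 3.2--3.4 of~\cite{HP90}---so the benchmark is the classical argument in that survey, and your overall architecture (Hartman--Stampacchia on the truncated sets $\setX_r$, the interior-point truncation lemma, Minty's lemma plus the half-space representation for the pseudo-monotone case) is exactly the standard route. Statement (i), the second half of (ii), the closedness/convexity part of (iii), and the truncation lemma itself are all handled correctly.

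There is, however, a genuine gap in your proof of the \emph{existence} half of (ii). The hypothesis there bounds only the strict level set $L_{<}=\{x\in\setX:\langle F(x),x-x^0\rangle<0\}$, whereas your contradiction argument invokes boundedness of the closed set $L=\{x\in\setX:\langle F(x),x-x^0\rangle\le 0\}$, which is only assumed in the second half of (ii). The two are not interchangeable: testing $y=x^0$ in the truncated inequality shows that \emph{every} $x_r$ lies in $L$ automatically, so under the weaker hypothesis no contradiction is available. Concretely, take $F\equiv 0$ and $\setX=\RR^d$: then $L_{<}$ is empty (the hypothesis holds vacuously), $L=\RR^d$ is unbounded, every point solves (VIP), yet the truncated solution $x_r$ may sit on the sphere $\|x_r-x^0\|=r$ for every $r$, and your argument stalls. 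The correct treatment of the boundary case, as in~\cite{HP90}, is constructive rather than by contradiction: choose $r$ large enough that $L_{<}$ lies strictly inside the ball; if $\|x_r-x^0\|=r$, then $x_r\notin L_{<}$ gives $\langle F(x_r),x_r-x^0\rangle\ge 0$, while testing $y=x^0$ gives the reverse inequality, hence $\langle F(x_r),x_r-x^0\rangle=0$; then for any $y\in\setX$ the point $(1-t)x^0+ty$ lies in $\setX_r$ for small $t>0$, and the truncated inequality combined with this equality yields $\langle F(x_r),y-x_r\rangle\ge 0$, so the boundary solution already solves the full (VIP). Your closing remark that the contradiction ``must be made quantitative and uniform in $r$'' misdiagnoses the difficulty: no quantitative refinement rescues the contradiction route, and the missing idea is precisely this equality-on-the-boundary argument. (Your sketch of (iii) is acceptable at the level of detail the paper itself adopts; as you correctly flag, the boundedness step hinges on reading $\setX^*$ as the dual of the recession cone of $\setX$, with pseudo-monotonicity transferring $\langle F(x),x-x^0\rangle\le 0$ to $\langle F(x^0),x-x^0\rangle\le 0$, after which your level-set reasoning applies.)
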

\subsection{Sufficient conditions leading to one of the three uniform error bound conditions \{(u-KL), (u-SEB), (u-HEB)\}}\label{subsec:suff_ebs}

%Specifically, the following Proposition~\ref{prop:suff-HEB}  
The following theorem 
provides some sufficient conditions leading to one of the three uniform error bounds \{(u-KL), (u-SEB), (u-HEB)\}. 

\begin{theorem}\label{prop:suff-HEB}
Suppose Assumption~\ref{assump2} and one of the following conditions hold on (VIP):

{\rm(i)} $F$ is 
%\sout{piece-wise linear multi-functions} 
affine and $\setX$ is polyhedral;

{\rm(ii)} $F$ is Lipschitz continous on $\setX$ and there is $c_7>0$ such that $\dist\left(0,\partial g_{\lambda_1\lambda_2}(x)\right)>c_7\|y_{\lambda_1}(x)-y_{\lambda_2}(x)\|$ where $g_{\lambda_1\lambda_2}(x)=g_{\lambda_1}(x)-g_{\lambda_2}(x)$ with $\lambda_1>\lambda_2>0$ is the so called the D-gap function;

{\rm(iii)} $F$ is restrict strongly monotone on $\setX_{\text{\tiny VIP}}^*$, i.e., there exists $c_8>0$ such that
\[
\langle F(x)-F(\proj_{\setX_{\text{\tiny VIP}}^*}(x)),x-\proj_{\setX_{\text{\tiny VIP}}^*}(x)\rangle\geq c_8\dist^2\left(x,\setX_{\text{\tiny VIP}}^*\right),\quad\forall x\in\setX.
\]
Then, $\psi(x)$ satisfies \{\rm(u-KL),  (u-SEB), (u-HEB)\} property on $\Omega=\setX_{g_{\lambda}}^*=\setX_{\text{\tiny VIP}}^*$, with some $\mu>0$, $\delta>0$, and $\nu>0$.
\end{theorem}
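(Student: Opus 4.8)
\emph{Plan.} The three uniform bounds $\{$(u-KL), (u-SEB), (u-HEB)$\}$ are equivalent once we know $\psi=g_\lambda+\mathcal{I}_\setX$ is weakly convex, that $\Omega=\setX_{\text{\tiny VIP}}^*=[\psi\le\psi^*]$ is compact, and that $\psi$ is constant ($=0$) on $\Omega$: the first is secured because each of (i)--(iii) forces $\nabla g_\lambda$ to be (locally) Lipschitz through Proposition~\ref{prop:suff-grad-Lips}, the second is Assumption~\ref{assump2}, and the third is immediate from $g_\lambda(x)=0\Leftrightarrow x\in\setX_{\text{\tiny VIP}}^*$. Hence Proposition 2.2 of~\cite{WLLL23} applies and it suffices to verify \emph{one} of the three bounds in each case. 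I would target the geometric one, (u-HEB), i.e.\ a local quadratic growth
\[
\dist^2(x,\setX_{\text{\tiny VIP}}^*)\le\mu\,g_\lambda(x)\qquad\text{for }x\in\setX\text{ near }\setX_{\text{\tiny VIP}}^*,\ 0<g_\lambda(x)<\nu,
\]
and the reusable building block is the projection estimate $g_\lambda(x)\ge\frac{1}{2\lambda}\|x-y_\lambda(x)\|^2$, which follows by inserting $y=x\in\setX$ into the variational characterization of $y_\lambda(x)=\proj_\setX(x-\lambda F(x))$ in~\eqref{eq:yx}.

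\emph{Case (iii) is the cleanest, and I would do it first.} Take $x^*=\proj_{\setX_{\text{\tiny VIP}}^*}(x)$ and insert $y=x^*$ into the maximum defining $g_\lambda$ in~\eqref{eq:SGF_1}, which gives $g_\lambda(x)\ge\langle F(x),x-x^*\rangle-\frac{1}{2\lambda}\|x-x^*\|^2$. Since $x^*$ solves (VIP) we have $\langle F(x^*),x-x^*\rangle\ge0$, so restricted strong monotonicity yields $\langle F(x),x-x^*\rangle=\langle F(x)-F(x^*),x-x^*\rangle+\langle F(x^*),x-x^*\rangle\ge c_8\|x-x^*\|^2$. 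Combining these, $g_\lambda(x)\ge(c_8-\frac{1}{2\lambda})\dist^2(x,\setX_{\text{\tiny VIP}}^*)$, which is exactly (u-HEB) with $\mu=(c_8-\frac{1}{2\lambda})^{-1}$ whenever $c_8\lambda>\tfrac12$. Since $\lambda$ is a design parameter this is harmless; for small $\lambda$ one instead couples $g_\lambda(x)\ge\frac{1}{2\lambda}\|x-y_\lambda(x)\|^2$ with the natural-residual error bound $\dist(x,\setX_{\text{\tiny VIP}}^*)\lesssim\|x-y_\lambda(x)\|$ that restricted strong monotonicity together with Lipschitzness of $F$ provides.

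\emph{Case (i).} When $F$ is affine and $\setX$ is polyhedral, $y_\lambda(x)=\proj_\setX(x-\lambda F(x))$ is piecewise affine, so by~\eqref{eq:SGF_1} and~\eqref{eq:grad_g} the gap $g_\lambda$ (hence $\psi$) is piecewise quadratic on a polyhedral subdivision, and in particular semialgebraic. I would then invoke the classical error bound for affine variational inequalities over polyhedra, equivalently the Kurdyka--{\L}ojasiewicz inequality with exponent $\tfrac12$ for piecewise quadratic functions, which is precisely the pointwise form of (u-KL); uniformity of the constant over the compact set $\Omega$ follows by a finite-subdivision/compactness argument. \emph{Case (ii).} Here the hypothesis $\dist(0,\partial g_{\lambda_1\lambda_2}(x))>c_7\|y_{\lambda_1}(x)-y_{\lambda_2}(x)\|$ is a ``gradient-dominates-residual'' condition for the D-gap function. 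I would combine it with the classical sandwich bounds for $g_{\lambda_1\lambda_2}$, namely that $g_{\lambda_1\lambda_2}(x)$ is squeezed between positive multiples of $\|x-y_{\lambda_1}(x)\|^2$ and $\|x-y_{\lambda_2}(x)\|^2$, to turn the hypothesis into a {\L}ojasiewicz inequality $g_{\lambda_1\lambda_2}(x)\le C\,\dist^2(0,\partial g_{\lambda_1\lambda_2}(x))$ together with a residual error bound $\dist(x,\setX_{\text{\tiny VIP}}^*)\le\tau\|y_{\lambda_1}(x)-y_{\lambda_2}(x)\|$. Since the quantities $\|y_{\lambda_1}(x)-y_{\lambda_2}(x)\|$, $\|x-y_\lambda(x)\|$ and $\sqrt{g_\lambda(x)}$ are all mutually comparable near $\setX_{\text{\tiny VIP}}^*$, this transfers to the quadratic growth $\dist^2(x,\setX_{\text{\tiny VIP}}^*)\le\mu\,g_\lambda(x)$, i.e.\ (u-HEB) for $\psi$.

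\emph{Conclusion and main obstacle.} In each case I will have produced one member of $\{$(u-KL),(u-SEB),(u-HEB)$\}$ with a constant valid on a neighbourhood $\{\dist(\cdot,\Omega)<\delta\}\cap[\psi^*<\psi<\psi^*+\nu]$, and the equivalence from Proposition 2.2 of~\cite{WLLL23} then delivers all three simultaneously on $\Omega=\setX_{g_\lambda}^*=\setX_{\text{\tiny VIP}}^*$. I expect the hardest step to be Case (ii): the hypothesis is phrased through the \emph{unconstrained} D-gap subdifferential $\partial g_{\lambda_1\lambda_2}$, whereas the target bound lives with the \emph{constrained} merit $\psi=g_\lambda+\mathcal{I}_\setX$, so the crux is the comparability/sandwich machinery that rigorously passes between the two families of merit functions and their residuals. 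Secondary care is needed in Case (i) to upgrade the pointwise KL exponent to a constant uniform over $\Omega$, and in Case (iii) to track the dependence of the growth modulus on $\lambda$.
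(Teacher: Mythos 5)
Your proposal is partially correct: case (iii) reproduces the paper's argument, case (i) is a legitimate alternative route, but case (ii) has a genuine gap at exactly the step you flagged as the crux.

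For case (iii) you do precisely what the paper does: insert $y=\proj_{\setX_{\text{\tiny VIP}}^*}(x)$ into the max in~\eqref{eq:SGF_1}, use $\langle F(\proj_{\setX_{\text{\tiny VIP}}^*}(x)),x-\proj_{\setX_{\text{\tiny VIP}}^*}(x)\rangle\geq0$ and restricted strong monotonicity to get $g_{\lambda}(x)\geq\left(c_8-\frac{1}{2\lambda}\right)\dist^2(x,\setX_{\text{\tiny VIP}}^*)$, with the same implicit restriction $\lambda>\frac{1}{2c_8}$ that the paper imposes. (Your fallback for small $\lambda$ invokes Lipschitzness of $F$, which is not among the hypotheses of case (iii); the paper simply restricts $\lambda$.) For case (i) the paper takes a different path from yours: it observes that $\nabla g_{\lambda}+\mathcal{N}_{\setX}$ is a piecewise polyhedral multifunction, invokes Robinson's metric subregularity result to get $\dist(x,\cdot)\leq\mu_4\dist(0,\partial\psi(x))$ in a small-residual regime, and then converts this into (u-HEB) through the $T_{\alpha}$ chain, namely statements (iii) and (vii) of Proposition~\ref{prop} applied at $\hat{x}=T_{\alpha}(x)$. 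Your route via the classical polyhedral/AVI error bound in the natural residual, combined with $g_{\lambda}(x)\geq\frac{1}{2\lambda}\|x-y_{\lambda}(x)\|^2$ (which is correct: take $y=x$ in the variational characterization of $y_{\lambda}(x)$), yields (u-HEB) more directly and avoids the proximal-map detour; what it costs you is the uniformization of the constant over $\Omega$, which you assert rather than prove, whereas the paper's Robinson-based bound is uniform by construction.

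The gap is in case (ii). Your chain needs two comparabilities that do not follow from the stated hypothesis plus the sandwich bounds: (a) an upper bound $g_{\lambda_1\lambda_2}(x)\leq C\|y_{\lambda_1}(x)-y_{\lambda_2}(x)\|^2$ (the sandwich controls $g_{\lambda_1\lambda_2}$ by the residuals $\|x-y_{\lambda_i}(x)\|^2$, not by $\|y_{\lambda_1}(x)-y_{\lambda_2}(x)\|^2$), and (b) the residual error bound $\dist(x,\setX_{\text{\tiny VIP}}^*)\leq\tau\|y_{\lambda_1}(x)-y_{\lambda_2}(x)\|$. The latter is false pointwise even arbitrarily close to the solution set: in Example~\ref{exp:nvip_1}, with $F(x)=-x$, $\setX=[-1,1]$, $\lambda_1=1$, $\lambda_2=\frac{1}{2}$, and $x=0.9$, one has $y_{\lambda_1}(x)=y_{\lambda_2}(x)=1$ (both unprojected points $(1+\lambda)x$ exceed $1$), so $\|y_{\lambda_1}(x)-y_{\lambda_2}(x)\|=0$ while $\dist(x,\setX_{\text{\tiny VIP}}^*)=0.1>0$; the same happens for every $x\in\left(\frac{1}{1+\lambda_2},1\right)$. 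So the quantities $\|y_{\lambda_1}(x)-y_{\lambda_2}(x)\|$, $\|x-y_{\lambda}(x)\|$, and $\sqrt{g_{\lambda}(x)}$ are \emph{not} mutually comparable near $\setX_{\text{\tiny VIP}}^*$, and your "gradient-dominates-residual $\Rightarrow$ {\L}ojasiewicz $+$ residual bound" derivation cannot close. The paper sidesteps this entirely: it cites Lemma 4.2 of~\cite{LMY22}, which establishes the KL property of the D-gap function at solutions under exactly the stated condition (by a finer analysis than the naive chain), uniformizes via Assumption~\ref{assump2}, and then performs the transfer you worried about with two cheap facts — Theorem 3.2 of~\cite{YTF97} ($g_{\lambda_1\lambda_2}$ and $g_{\lambda_1}$ have the same solution set) and $g_{\lambda_1\lambda_2}(x)\leq g_{\lambda_1}(x)$ on $\setX$ (since $g_{\lambda_2}\geq0$ there), so that $\dist^2(x,\Omega)\leq\mu\,g_{\lambda_1\lambda_2}(x)\leq\mu\,g_{\lambda_1}(x)$. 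A final minor point: your opening claim that each of (i)--(iii) secures weak convexity of $\psi$ through Proposition~\ref{prop:suff-grad-Lips} does not cover case (iii), whose hypotheses include neither Lipschitz continuity of $F,\nabla F$ nor compactness of $\setX$; this matters because the equivalence of the three uniform bounds via Proposition 2.2 of~\cite{WLLL23} requires weak convexity.
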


\begin{proof}
{\rm(i)} Since $F$ is affine 
%\sout{piece-wise linear multi-functions}
and $\setX$ is polyhedral, by~\eqref{eq:yx} and~\eqref{eq:grad_g}, we have that the mapping $\nabla g_{\lambda}(x)+\mathcal{N}_{\setX}(x)$ is piece-wise linear multi-functions. 
By %{\color{red} the corollary (can we be specific about this corollary?)} of~
a well-known result due to Robinson~\cite{Robinson1981} (or Theorem 3.3 of~\cite{ZN14}), we have the metric subregularity of the mapping $\nabla g_{\lambda}(x)$, i.e.,
\[
\dist(x,\overline{\setX}_{g_{\lambda}})\leq\mu_4\dist(0,\partial\psi(x)),\qquad\forall x\in\RR^d\quad\mbox{and}\quad\dist\left(0,\partial\psi(x)\right)\leq\varepsilon,
\]
with $\mu_4>0$ and $\varepsilon>0$. Let $\hat{x}=T_{\alpha}(x)$ with $f=g_{\lambda}$, $r=\mathcal{I}_{\setX}$, and $\alpha>0$. For any $x^*\in\setX_{g_{\lambda}}^*$ and any $x$ with $\dist(x,\setX_{g_{\lambda}}^*)<\delta$ and $x\in[\psi^*<\psi(x)<\psi^*+\nu]$ with $\nu\leq\frac{\frac{2}{\alpha}-L-\rho}{2\left(L+\frac{1}{\alpha}\right)^2}\varepsilon^2$, by statement (iii) of Proposition~\ref{prop}, we have 
$\psi^*\leq\psi(\hat{x})\leq\psi(x)$ and $\|x-\hat{x}\|\leq\sqrt{\frac{2}{\frac{2}{\alpha}-L-\rho}[\psi(x)-\psi(\hat{x})]}\leq\sqrt{\frac{2}{\frac{2}{\alpha}-L-\rho}\nu}\leq\frac{\varepsilon}{L+\frac{1}{\alpha}}$. Finally, combining with statement (vii) of Proposition~\ref{prop}, we have  
$\dist\left(0,\partial\psi(\hat{x})\right)\leq\left(L+\frac{1}{\alpha}\right)\|x-\hat{x}\|\leq\varepsilon$. Therefore, $\dist\left(\hat{x},\setX_{g_{\lambda}}^*\right)\leq\mu_4\dist(0,\partial\psi(\hat{x}))\leq\mu_4\left(L+\frac{1}{\alpha}\right)\|x-\hat{x}\|$. Since $\dist(\hat{x},\setX_{g_{\lambda}}^*)\geq\dist(x,\setX_{g_{\lambda}}^*)-\|x-\hat{x}\|$, again using the statement (iii) of Proposition~\ref{prop}, we have
\[
\begin{aligned}
\dist(x,\setX_{g_{\lambda}}^*)
\leq& \left[\mu_4\left(L+\frac{1}{\alpha}\right)+1\right]\|x-\hat{x}\| \\
\leq&\left[\mu_4\left(L+\frac{1}{\alpha}\right)+1\right]\sqrt{\frac{2}{\frac{2}{\alpha}-L-\rho}[\psi(x)-\psi(\hat{x})]}\\
\leq&\left[\mu_4\left(L+\frac{1}{\alpha}\right)+1\right]\sqrt{\frac{2}{\frac{2}{\alpha}-L-\rho}[\psi(x)-\psi^*]}.
\end{aligned}
\]

Combining Assumption~\ref{assump2} and $\setX_{g_{\lambda}}^*=[\psi\leq\psi^*]$, we conclude that {\rm(u-HEB)} holds on $\Omega=\setX_{g_{\lambda}}^*$.%$=\setX_{\text{\tiny VIP}}^*$.

{\rm(ii)} By Lemma 4.2 in~\cite{LMY22}, we have that the D-gap function $g_{\lambda_1\lambda_2}(x)$ satisfies the KL property at any solution of (VIP). By Assumption~\ref{assump2}, we have that $g_{\lambda_1\lambda_2}(x)$ satisfies {\rm(u-HEB)}. By the Theorem 3.2 of~\cite{YTF97} we obtain that $g_{\lambda_1\lambda_2}(x)$ and $g_{\lambda_1}(x)$ have the same solution set. Moreover, by the combination with the fact that $g_{\lambda_1\lambda_2}(x)\leq g_{\lambda_1}(x)$, we have that $g_{\lambda_1}(x)$ satisfies {\rm(u-HEB)} on the solution set.

{\rm(iii)} Since $F$ is restrict strongly monotone on $\setX_{\text{\tiny VIP}}^*=\setX_{g_{\lambda}}^*=[\psi\leq\psi^*]$,
we have that
\[
c_8\dist^2(x,[\psi\leq\psi^*])\leq \langle F(x),x-\proj_{\setX_{\text{\tiny VIP}}^*}(x)\rangle-\langle F(\proj_{\setX_{\text{\tiny VIP}}^*}(x)),x-\proj_{\setX_{\text{\tiny VIP}}^*}(x)\rangle,\quad\forall x\in\setX.
\]
By the fact that $\langle F(\proj_{\setX_{\text{\tiny VIP}}^*}(x)),x-\proj_{\setX_{\text{\tiny VIP}}^*}(x)\rangle\geq0$, we obtain 
\[
c_8\dist^2(x,[\psi\leq\psi^*])\leq \langle F(x),x-\proj_{\setX_{\text{\tiny VIP}}^*}(x)\rangle,\quad\forall x\in\setX.
\]
Adding the term $-\frac{1}{2\lambda}\|x-\proj_{\setX_{\text{\tiny VIP}}^*}(x)\|^2$ on both sides of above inequality, we have
\[
\left(c_8-\frac{1}{2\lambda}\right)\dist^2(x,[\psi\leq\psi^*])\leq \langle F(x),x-\proj_{\setX_{\text{\tiny VIP}}^*}(x)\rangle-\frac{1}{2\lambda}\|x-\proj_{\setX_{\text{\tiny VIP}}^*}(x)\|^2,\quad\forall x\in\setX.
\]
By the definition of $g_{\lambda}(x)$ and $g_{\lambda}(x^*)=0$ with $x^*\in\setX_{\text{\tiny VIP}}^*$, we have 
\[
\langle F(x),x-\proj_{\setX_{\text{\tiny VIP}}^*}(x)\rangle-\frac{1}{2\lambda}\|x-\proj_{\setX_{\text{\tiny VIP}}^*}(x)\|^2\leq g_{\lambda}(x)-g_{\lambda}(x^*),\quad\forall x\in\setX.
\]
If $\lambda>\frac{1}{2c_8}$, by Assumption~\ref{assump2}, we obtain the (u-HEB) with $\gamma=2$.
\end{proof}

The third condition in Theorem~\ref{prop:suff-HEB} (case (iii)) carries the name of restrict strongly monotonicity. Therefore, one may wonder how does that condition relate to monotonicity in general. The answer is that they are different. 
%The following example shows that the restrictly strongly monotone (VIP) (statement (iii) of Proposition~\ref{prop:suff-HEB}) may be non-monotone and $\setS_{\tiny\rm MVI}=\emptyset$.

\begin{example}[Example of restrict strongly monotone mapping]\label{exp:rsm}
{\rm Consider the following VI problem
\begin{equation*}
\begin{aligned}
&\mbox{\rm Find $z^*\in\setZ$ such that}\\
&\langle F(z^*),z-z^*\rangle\geq0,\quad\forall z\in\setZ,
\end{aligned}
\end{equation*}
where $\setZ=\{z=(x,y)^{\top}\in\RR\times\RR_{++}
: %\mid 
x+y\leq10, y\geq1\}$ and $F(z)=(2xy^2,-2x^2y)^{\top}$. Obviously the solution of this problem is $z^*=(x^*,y^*)^{\top}$ with $x^*=0$, and $y^*\in[1,10]$. We can easily show that $F$ is non-monotone by setting $z=(1,1)^{\top}$, $z'=(1,2)^{\top}$. In that case, $\langle F(z)-F(z'),z-z'\rangle=\langle(2-8,-2+4)^{\top},(0,-1)^{\top}\rangle=-2<0$.
Next we are going to show that the problem also does not have a Minty solution; i.e.\ $\setS_{\text{\tiny MVI}}=\emptyset$. Note $\langle F(z),z-z^*\rangle=\langle(2xy^2,-2xy)^{\top},(x,y-y^*)^{\top}\rangle=2xy(xy-y+y^*)$.  

Case 1: $y^*\in[1,3]$. If $x=0.1$, $y=10$, $\langle F(z),z-z^*\rangle=2(y^*-9)<0$.

Case 2: $y^*\in(3,10]$. If $x=-1$, $y=1$, $\langle F(z),z-z^*\rangle=-2(y^*-2)<0$.

Combining these two cases, we conclude $\setS_{\text{\tiny MVI}}=\emptyset$.
Finally, we show that $F$ is restrict strongly monotone on $\setZ_{\text{\tiny VIP}}^*=\{z=(x,y)^{\top}\mid x=0, y\in[1,10]\}$. Letting $z_p^*=\proj_{\setZ_{\text{\tiny VIP}}^*}(z)$, we have 
\[
\langle F(z)-F(z_p^*),z-z_p^*\rangle=\langle (2xy^2,-2xy)^{\top},(x,y)^{\top}-(0,y)^{\top}\rangle=2x^2y^2\geq2x^2=2\|z-z_p^*\|^2.
\]
}
\end{example}

\begin{remark}
%Here we remark that for 
%For some examples, 
%we can directly obtain that they satisfies 
Some of the examples that we studied can be directly shown to satisfy the ($\ell$-SEB) condition;  e.g., Example~\ref{exp:nvip_1} satisfies the ($\ell$-SEB) with $\nu=1/8$ and $c_2=1$.
\end{remark}

Since $\setX$ is compact and one of the three conditions in Theorem~\ref{prop:suff-HEB} holds, we are led to the following conclusion:

\begin{corollary}
Consider a general VI model (\ref{Prob:VIP}) with a compact feasible set $\setX$.  Suppose that the non-monotone mapping $F$ satisfies one of the three conditions stipulated in Theorem~\ref{prop:suff-HEB}. Suppose we randomly uniformly select an initial solution from $\setX$, and apply the proximal gradient method (Algorithm~\ref{alg:PGOP}) on its smooth gap function defined as in (\ref{eq:SGF_1}). Then, with a positive probability the algorithm converges with a linear rate of convergence. 
\end{corollary}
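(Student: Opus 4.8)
The plan is to verify that, under the corollary's hypotheses, the three \emph{basic assumptions} singled out after Theorem~\ref{theo:n-s-VIP} all hold for the reformulation (OP-$g_{\lambda}$), then invoke that theorem to produce a whole level-set band of initializations from which Algorithm~\ref{alg:PGOP} is linearly convergent, and finally show this band occupies a set of positive Lebesgue measure inside $\setX$, so that a uniformly drawn starting point lands in it with strictly positive probability.

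First I would discharge Assumption~\ref{assump1} for $\psi=g_{\lambda}+\mathcal{I}_{\setX}$. The regularizer $r=\mathcal{I}_{\setX}$ is the indicator of a convex set, hence convex, i.e.\ $0$-weakly convex, so the weak-convexity requirement holds with $\rho=0$. For the gradient-Lipschitz part, the standing smoothness of the VI means $F$ is $C^1$, so $F$ and $\nabla F$ are continuous; since $\setX$ is compact, Proposition~\ref{prop:suff-grad-Lips}(i) yields that $\nabla g_{\lambda}$ is Lipschitz on $\setX$, completing Assumption~\ref{assump1}. Next, Assumption~\ref{assump2} holds because $g_{\lambda}$ is continuous and $\setX_{\text{\tiny VIP}}^{*}=\{x\in\setX : g_{\lambda}(x)=0\}$ is a closed subset of the compact set $\setX$, hence compact. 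Finally, as $F$ satisfies one of the three conditions of Theorem~\ref{prop:suff-HEB} and Assumption~\ref{assump2} is in force, that theorem guarantees that $\psi$ obeys one of \{(u-KL),(u-SEB),(u-HEB)\} on $\Omega=\setX_{g_{\lambda}}^{*}$. With all three basic assumptions established, Theorem~\ref{theo:n-s-VIP} supplies a $\nu>0$ such that Algorithm~\ref{alg:PGOP} converges linearly to $[\psi\le\psi^{*}]$ from every initial point in $[\psi^{*}<\psi<\psi^{*}+\nu]$.

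It remains to estimate the probability that a uniform draw $x^{0}\in\setX$ falls in the admissible band. On $\setX$ we have $\psi=g_{\lambda}$ and, since $g_{\lambda}\ge 0$ vanishes exactly on the nonempty solution set, $\psi^{*}=0$; thus the admissible set is $U:=\{x\in\setX : 0<g_{\lambda}(x)<\nu\}$. I would argue $U$ has positive measure as follows. Because the problem is genuinely non-monotone we have $\setX_{\text{\tiny VIP}}^{*}\subsetneq\setX$, so there is a point $x_{1}\in\setX$ with $g_{\lambda}(x_{1})>0$; choosing any $x^{*}\in\setX_{\text{\tiny VIP}}^{*}$ and restricting $g_{\lambda}$ to the segment $t\mapsto(1-t)x^{*}+tx_{1}$ (which lies in the convex set $\setX$) gives a continuous function equal to $0$ at $t=0$ and to $g_{\lambda}(x_{1})>0$ at $t=1$; its range is a connected subset of $[0,\infty)$ containing $0$ and $g_{\lambda}(x_{1})$, hence meets the interval $(0,\nu)$, producing a point $\hat{x}\in\setX$ with $0<g_{\lambda}(\hat{x})<\nu$. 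By continuity of $g_{\lambda}$ the set $U$ is relatively open in $\setX$ and contains $\hat{x}$; since $\setX$ is full-dimensional this nonempty relatively open set has strictly positive Lebesgue measure. Consequently, under the uniform law on $\setX$ we obtain $\Pr[x^{0}\in U]=\operatorname{vol}(U)/\operatorname{vol}(\setX)>0$, and on the event $\{x^{0}\in U\}$ the iterates converge linearly by the previous paragraph, which proves the claim.

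The main obstacle is the measure-theoretic step rather than the convergence machinery, which is already packaged in Theorem~\ref{theo:n-s-VIP}. Two points need care: (a) ensuring $g_{\lambda}\not\equiv 0$ on $\setX$, so that the band $\{0<g_{\lambda}<\nu\}$ is nonempty — this is exactly where non-monotonicity (equivalently $\setX_{\text{\tiny VIP}}^{*}\subsetneq\setX$) enters; and (b) that a uniform distribution on $\setX$ is well posed, which requires $\setX$ to have nonempty interior. If $\setX$ were lower-dimensional the argument would go through verbatim with Lebesgue measure replaced by the induced measure on the affine hull of $\setX$. One should also note that the strict positivity $g_{\lambda}(\hat{x})>0$ (not merely $\ge 0$) is automatic, since the value landed in $(0,\nu)$ is bounded away from $0$.
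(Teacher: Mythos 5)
Your proposal is correct and follows essentially the route the paper intends: the paper states this corollary without a separate proof, as the immediate combination of Proposition~\ref{prop:suff-grad-Lips}(i) together with compactness of $\setX$ (yielding Assumptions~\ref{assump1} and~\ref{assump2}), Theorem~\ref{prop:suff-HEB} (yielding one of the uniform error bounds), and Theorem~\ref{theo:n-s-VIP} (yielding the band $[\psi^*<\psi<\psi^*+\nu]$ of initial points from which convergence is linear). Your only addition is to make explicit the final measure-theoretic step --- that $\{x\in\setX : 0<g_{\lambda}(x)<\nu\}$ is nonempty (via non-monotonicity forcing $\setX_{\text{\tiny VIP}}^*\subsetneq\setX$, plus the intermediate value theorem along a segment) and relatively open, hence of positive measure --- a step the paper leaves implicit, and your treatment of it, including the caveat about replacing Lebesgue measure by the induced measure on the affine hull when $\setX$ is not full-dimensional, is sound.
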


\subsection{A homotopy continuation method}

If $F$ is %{\color{red}\sout{piecewise linear} affine} 
affine and $\setX$ is polyhedral, 
%which is arguably fairly general case for VI, then 
it is possible to solve the model (\ref{Prob:VIP}) via homotopy continuation without requiring an initial solution solution that is close enough to the solution set. 
Take any simply linear and strongly monotone mapping, say $H(x)=x$, and let $0\le t \le 1$. Consider
\[
F^{(t)}(x) := t H(x) + (1-t) F(x),
\]
and 
\[
g^{(t)}_{\lambda}(x) := \max_{y\in\setX}\, \left[ \langle F^{(t)}(x),x-y\rangle-\frac{1}{2\lambda}\|x-y\|^2\right] .
\]
Choose any fixed parameter $0< \delta <1$.  Let $\overline{\setX}_{(t)}$ and $\setX_{(t)}^*$ be the set of critical points and optimal solutions of $\min_{x}\psi_{(t)}(x)=g_{\lambda}^{(t)}(x)+\mathcal{I}_{\setX}(x)$, and $\psi_{(t)}^*$ be its optimal value.

\begin{algorithm}[h]
\caption{Proximal Gradient Method on Gap Function with Homotopy Continuation}
{\bf Initialization:}  Let $t^{(0)}=1$. Find $x^0\in\setX$ as the solution to the strongly monotone model (\ref{Prob:VIP}) with $F=F^{(t^0)}$. 

\begin{algorithmic}[1]
\For{$k$}
\State $i:=1$. %, $x_{i-1}^k:=x^k$.

\State Let 
$t_i^{(k)}:=t^{(k)}(1-\delta^i)$.
%\For{$i$}
%$t_i^{(k)}:=t^{(k)}(1-\delta)$. 

\State 
Apply Algorithm \ref{alg:PGOP} to minimize $g^{(t_i^{(k)})}_{\lambda}$ over $\setX$ initialized from $x^k$. Let the solution be $x_{i}^k$. 

\State 
If $g^{(t_i^{(k)})}_{\lambda}(x_{i}^k)=0$ then let $x^{k+1}:=x_{i}^k$, $k:=k+1$ and Go To Line 1. 

\State 
If $g^{(t_i^{(k)})}_{\lambda}(x_{i}^k)>0$ then let $i:=i+1$ and Go To Line 3. 

%\EndFor
\EndFor
\end{algorithmic}
\label{alg:hgap}
\end{algorithm}
In order to formally establish convergence of Algorithm~\ref{alg:hgap}, let us introduce the following assumption:
\begin{assumption}\label{assump4}
There is $\sigma>0$ such that $\min_{x\in\overline{\setX}_{(t)}\setminus\setX_{(t)}^*}\psi_{(t)}(x)-\psi_{(t)}^*>\sigma$ for any $t\in[0,1]$.
\end{assumption}
\begin{theorem}
    Suppose that Assumption~\ref{assump4} holds, $F$ %is piecewise linear {\color{red} (Theorem 3 only allows $F$ to be affine linear at this point!)} 
    is affine linear, 
    and $\setX$ is a bounded polyhedron. Let $\{ x^0,x^1,\cdots\}$ be the sequence produced by Algorithm~\ref {alg:hgap}. Let $x^*$ be any cluster point of $\{ x^0,x^1,\cdots\}$. Then, $x^*$ is a solution for the VI model (\ref{Prob:VIP}). 
\end{theorem}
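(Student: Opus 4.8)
The plan is to establish three facts in sequence: (a) for every $k$ the inner loop over $i$ terminates, with an acceptance index $i_k$ bounded \emph{uniformly} in $k$; (b) consequently the homotopy parameters satisfy $t^{(k)}\to 0$ geometrically; and (c) any cluster point of $\{x^k\}$ solves the original model, by continuity of the gap function in the homotopy parameter. Throughout I will use that $F^{(t)}=F+t(H-F)$ is affine in both $x$ and $t$, so by Theorem~\ref{prop:suff-HEB}(i) the composite $\psi_{(t)}=g^{(t)}_{\lambda}+\mathcal{I}_{\setX}$ satisfies one of the uniform error bounds for every fixed $t\in(0,1]$, and the machinery of Section~\ref{sec:PG-OP} applies to the minimization of $g^{(t)}_{\lambda}$ over $\setX$ (here $f=g^{(t)}_{\lambda}$, $r=\mathcal{I}_{\setX}$, which is convex). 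Since $\nabla F^{(t)}=tI+(1-t)\nabla F$ varies continuously over the compact interval $[0,1]$, the gradient-Lipschitz constant of $g^{(t)}_{\lambda}$ (Proposition~\ref{prop:suff-grad-Lips}) is uniformly bounded in $t$, so one fixed stepsize $\alpha$ renders Proposition~\ref{prop} (in particular the descent inequality (iii)) and the convergence $\dist(0,\partial\psi_{(t)}(x^j))\to 0$ of Algorithm~\ref{alg:PGOP} valid simultaneously for all $t$.

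For step (a), the key estimate is that $t\mapsto g^{(t)}_{\lambda}(x)$ is Lipschitz \emph{uniformly} over $x\in\setX$. Comparing the two inner maximizations and using $\langle F^{(t)}(x),x-y\rangle=\langle F(x),x-y\rangle+t\langle H(x)-F(x),x-y\rangle$, one gets
\[
\bigl|g^{(t)}_{\lambda}(x)-g^{(t')}_{\lambda}(x)\bigr|\le M\,|t-t'|,\qquad M:=\Bigl(\max_{x\in\setX}\|H(x)-F(x)\|\Bigr)\,\mathrm{diam}(\setX),
\]
which is finite because $\setX$ is compact. Since $x^k$ solves the VI at $t^{(k)}$ we have $g^{(t^{(k)})}_{\lambda}(x^k)=0$, hence $g^{(t_i^{(k)})}_{\lambda}(x^k)\le M\,(t^{(k)}-t_i^{(k)})=M\,t^{(k)}\delta^i\le M\delta^i$. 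Fix the smallest integer $i^*$ with $M\delta^{i^*}<\sigma$, where $\sigma$ comes from Assumption~\ref{assump4}; this $i^*$ is independent of $k$. Then for every $i\ge i^*$ the initialization $x^k$ lies in the sublevel set $[\psi_{(t_i^{(k)})}<\sigma]$, which by Assumption~\ref{assump4} (recall $\psi^*_{(t)}=0$) contains no critical point of $\psi_{(t_i^{(k)})}$ except global minimizers, i.e.\ zero-gap points. Running Algorithm~\ref{alg:PGOP} from $x^k$, the descent inequality Proposition~\ref{prop}(iii) keeps every iterate in this sublevel set, while $\dist(0,\partial\psi_{(t_i^{(k)})}(x^j))\to 0$ together with closedness of the subdifferential graph of the weakly convex $\psi_{(t_i^{(k)})}$ forces every cluster point to be critical; such a point has value below $\sigma$ and is therefore a global minimizer, so the returned $x_i^k$ satisfies $g^{(t_i^{(k)})}_{\lambda}(x_i^k)=0$. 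Thus the inner loop accepts no later than $i=i^*$, giving $i_k\le i^*$ for all $k$.

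Step (b) is then immediate: since $1-\delta^{i_k}\le 1-\delta^{i^*}<1$, the update $t^{(k+1)}=t^{(k)}(1-\delta^{i_k})$ yields $t^{(k)}\le(1-\delta^{i^*})^k\to 0$. For step (c), let $x^{k_j}\to x^*$ be a convergent subsequence; then $t^{(k_j)}\to 0$ and $g^{(t^{(k_j)})}_{\lambda}(x^{k_j})=0$. Combining the uniform Lipschitz bound in $t$ with continuity of $g^{(0)}_{\lambda}=g_{\lambda}$ in $x$ (Proposition~\ref{prop:suff-grad-Lips}), we estimate
\[
\bigl|g_{\lambda}(x^*)\bigr|=\bigl|g_{\lambda}(x^*)-g^{(t^{(k_j)})}_{\lambda}(x^{k_j})\bigr|\le M\,t^{(k_j)}+\bigl|g_{\lambda}(x^{k_j})-g_{\lambda}(x^*)\bigr|\xrightarrow[j\to\infty]{}0,
\]
so $g_{\lambda}(x^*)=0$. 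Since $g_{\lambda}(x^*)=0\iff x^*\in\setX_{\text{\tiny VIP}}^*$, the cluster point $x^*$ solves~(\ref{Prob:VIP}).

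The main obstacle is step (a), specifically the uniformity of the acceptance index $i_k$: one must rule out the degenerate scenario in which ever-smaller homotopy steps are forced, so that $t^{(k)}$ stalls at a positive limit and the path never reaches $t=0$. This is exactly where Assumption~\ref{assump4} — a \emph{uniform} gap $\sigma$ between the optimal value and spurious critical values across all $t$ — is indispensable: it turns the Lipschitz-in-$t$ estimate into a fixed lower bound $\delta^{i^*}$ on the admissible relative step, and it guarantees that once the gap at the initialization drops below $\sigma$, the monotone proximal-gradient descent cannot be trapped at a non-solution critical point. The delicate part is verifying carefully that Algorithm~\ref{alg:PGOP} initialized inside $[\psi_{(t)}<\sigma]$ actually converges to a zero-gap point rather than merely to \emph{some} critical point; this rests on the descent property of Proposition~\ref{prop}(iii), the vanishing of $\dist(0,\partial\psi_{(t)})$ along the iterates, and proximal regularity of the weakly convex gap function.
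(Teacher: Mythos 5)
Your proof is correct, and while it follows the same outer skeleton as the paper's six-step argument (a Lipschitz-in-$t$ bound on the gap at the warm start, a uniform bound on the inner index $i$, geometric decay of $t^{(k)}$, and a final passage to the limit), the mechanism you use for the two crucial steps is genuinely different. Your bound $g^{(t_i^{(k)})}_{\lambda}(x^k)\le M\,t^{(k)}\delta^i$ is exactly the paper's Step~2 estimate ($\delta^i t^{(k)} D$ with $D$ from compactness of $\setX$). But where the paper certifies success of the inner solve by routing through the error-bound machinery --- ($\ell$-PEB) for each $\psi_{(t)}$ via the affine/polyhedral case of Theorem~\ref{prop:suff-HEB}, a uniform lower bound on the radius $\nu^{(t)}$ via Assumption~\ref{assump4} combined with Proposition~\ref{prop:eb_set}, and then the linear-convergence Theorem~\ref{theo:n-s-VIP} --- you instead read Assumption~\ref{assump4} directly as ``no spurious critical values in $(0,\sigma]$'' and combine it with the generic behavior of Algorithm~\ref{alg:PGOP}: monotone descent from Proposition~\ref{prop}(iii) traps the iterates in $[\psi_{(t)}<\sigma]$, the vanishing residual $\dist(0,\partial\psi_{(t)}(x^j))\to 0$ plus closedness of the subdifferential of the weakly convex $\psi_{(t)}$ makes every cluster point critical, and Assumption~\ref{assump4} then forces it to be a zero-gap point. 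This is more elementary (it never needs the error-bound extension or the linear-rate theorem for termination, and the affinity/polyhedrality is used only through uniform Lipschitz constants and solution existence), and in one respect it is more careful than the paper: with the multiplicative update $t^{(k+1)}=t^{(k)}(1-\delta^{i_k})$ the parameter never actually reaches $0$ in finitely many steps, so the paper's Step~5 claim of finite-step arrival at $t=0$ is loose, whereas your step (c) --- uniform Lipschitzness of $t\mapsto g^{(t)}_{\lambda}$ on $\setX$ plus continuity of $g_{\lambda}$ applied along a convergent subsequence --- handles the limit cleanly; your explicit remark that $\nabla g^{(t)}_{\lambda}$ has a Lipschitz constant uniform over $t\in[0,1]$ (so one stepsize $\alpha$ serves all inner solves) also fills a point the paper glosses over. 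What the paper's heavier route buys in exchange is quantitative: each inner solve comes with a certified linear rate, which your criticality argument does not provide; and note that your opening invocation of the uniform error bounds from Theorem~\ref{prop:suff-HEB}(i) is in fact never used in your core argument and could be deleted.
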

\begin{proof}
\textit{\underline{Step 1:} For any $i\in\setN$ and $k\in\setN$, ($\ell$-PEB) holds with $\nu^{(t_i^{(k)})}>0$.}

Since both $F$ and $H$ are %\sout{piecewise linear} {\color{red} 
affine, and the set $\setX$ is a polyhedron, by statement (i) of Theorem~\ref{prop:suff-HEB}, for any $i\in\setN$ and $k\in\setN$ there exists $\nu^{(t_i^{(k)})}>0$ such that the ($\ell$-PEB) holds on $\psi_{(t_i^{(k)})}=g^{(t_i^{(k)})}_{\lambda}+\mathcal{I}_{\setX}$. 

\textit{\underline{Step 2:} For any $k\in\setN$, as long as $i$ is sufficiently large, $x^k$ will fall within the set $[\psi_{(t_i^{(k)})}^*\leq\psi_{(t_i^{(k)})}\leq\psi_{(t_i^{(k)})}^*+\nu^{(t_i^{(k)})}]$.}

The left-hand side, $\psi_{(t_i^{(k)})}^*\leq\psi_{(t_i^{(k)})}(x^k)$, is straightforward. Next we are going to show $\psi_{(t_i^{(k)})}(x^k)\leq\psi_{(t_i^{(k)})}^*+\nu^{(t_i^{(k)})}$. Because

\[
\begin{aligned}
g^{(t_i^{(k)})}_{\lambda}(x^k)+\mathcal{I}_{\setX}(x^k)
=&g^{(t_i^{(k)})}_{\lambda}(x^k)\qquad\mbox{(line 4 of Algorithm~\ref{alg:hgap} guarantees $x^k\in\setX$)}\\
=&\max_{y\in\setX}\langle t_i^{(k)}H(x^k)+(1-t_i^{(k)})F(x^k),x^k-y\rangle-\frac{1}{2\lambda}\|x^k-y\|^2\\
\leq &g_{\lambda}^{(t^{(k)})}(x^k)+\max_{y\in\setX}\langle \delta^it^{(k)}(F(x^k)-H(x^k)),x^k-y\rangle\\
=&\max_{y\in\setX}\langle \delta^it^{(k)}(F(x^k)-H(x^k)),x^k-y\rangle,    
\end{aligned}
\]
by the boundedness of $\setX$ there must exist a positive number $D$ such that $g^{(t_i^{(k)})}_{\lambda}(x^k)+\mathcal{I}_{\setX}(x^k)\leq\delta^it^{(k)}D$. By $\delta\in(0,1)$, there is always a large enough $i$ such that $\psi_{(t_i^{(k)})}(x^k)\leq\psi_{(t_i^{(k)})}^*+\nu^{(t_i^{(k)})}$. 

\textit{\underline{Step 3:} For any $k\in\setN$, $i$ large enough, line 4 of Algorithm~\ref{alg:hgap} guarantees that $\psi_{(t_i^{(k)})}(x_i^k)=0$.}

By Theorem~\ref{theo:n-s-VIP} and Step 2, line 4 of Algorithm~\ref{alg:hgap} assures that $\psi_{(t_i^{(k)})}(x_i^k)=0$.

\textit{\underline{Step 4:} For each  $k\in\setN$, 
%$i$ does not tend to infinity; instead, 
there exists a uniform upper bound $\overline{i}$ for $i$ exiting line 5 to line 1.}

By Assumption~\ref{assump4}, Proposition~\ref{prop:eb_set} and Step 1, we have for any $k$ and $i$, $\nu^{(t_i^{(k)})}$ is uniformly lower bounded by a positive number. Then, by the update rule in lines 2-6 of Algorithm~\ref{alg:hgap}, we have $i$ does not tend to infinity; instead, there exists a uniform upper bound $\overline{i}$ for $i$.

\textit{\underline{Step 5:} The sequence $\{t^{(k)}\}$ strictly decreases to $0$ with a finite number $\overline{k}\in\setN$ of steps.}

By Step 3 we have that $i$ does not tend to infinity; instead, there exists a uniform upper bound $\overline{i}$ for $i$. Then by the update rule of $t^{(k)}$, we have $\frac{t^{(k+1)}}{t^{(k)}}\leq 1-\delta^{\overline{i}}$. Thus, the sequence $t^k$ strictly decreases to $0$ in a finite number of steps.

\textit{\underline{Step 6:} $x^{\overline{k}+1}$ is the solution of the original problem~\eqref{Prob:VIP}.} 

By Step 2 and Step 5, we have $x^{k}\in[\psi_{(0)}^*\leq\psi_{(0)}\leq\psi_{(0)}^*+\nu^{(0)}]$, $\forall k\geq\overline{k}$. By Theorem~\ref{theo:n-s-VIP}, we obtain that $\forall k>\overline{k}$, $x^{k}$  will be the solution of the original problem~\eqref{Prob:VIP}; i.e., the cluster point of Algorithm~\ref{alg:hgap} is the solution of the original problem~\eqref{Prob:VIP}.
\end{proof}

\section{Numerical Experiments}\label{sec:NE}

%{\color{red} (Shall we combined Subsections 5.1, 5.2 and 5.3 into one?)}

\subsection{Examples in previous sections} {\ }

\textbf{Example~\ref{exp:nvip_1}.} For this example $y_{\lambda}(x)=\proj_{[-1,1]}(1+\lambda)x$ and $\nabla g_{\lambda}(x)=-x-[x-y_{\lambda}(x)]+\frac{1}{\lambda}[y_{\lambda}(x)-x]$. The numerical results are displayed in Figure~\ref{fig:num-exp1}. Given that the gap function of Example~\ref{exp:nvip_1} contains no critical points except the global optimum, this test case serves specifically to verify Algorithm~\ref{alg:PGOPg}'s performance. The left two plots of Figure~\ref{fig:num-exp1} illustrate the convergence of Algorithm~\ref{alg:PGOPg} to $g_{\lambda}=0$ in Example~\ref{exp:nvip_1} under different initialization points. The right two plots of Figure~\ref{fig:num-exp1} demonstrate that Algorithm~\ref{alg:PGOPg} converges to different solutions for Example~\ref{exp:nvip_1} when initialized from different starting points.

\begin{figure}[ht]
\begin{center}
{\includegraphics[width=0.3\textwidth]{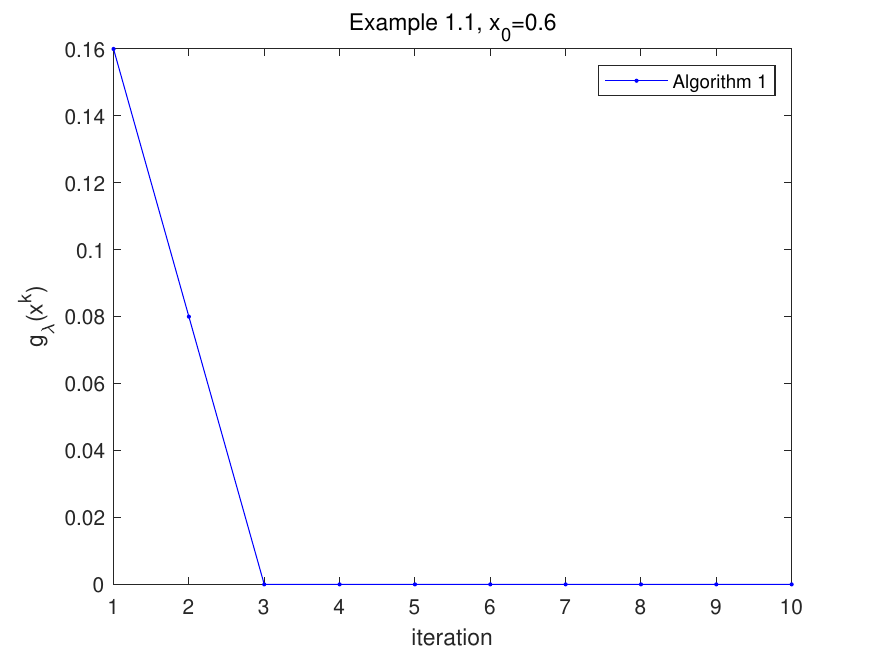}}
{\includegraphics[width=0.3\textwidth]{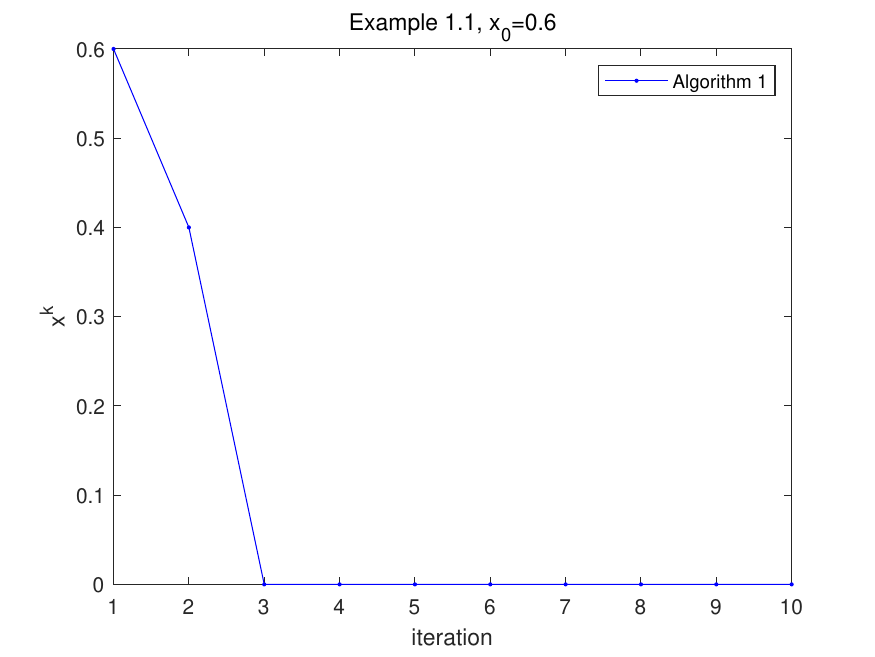}}\\
{\includegraphics[width=0.3\textwidth]{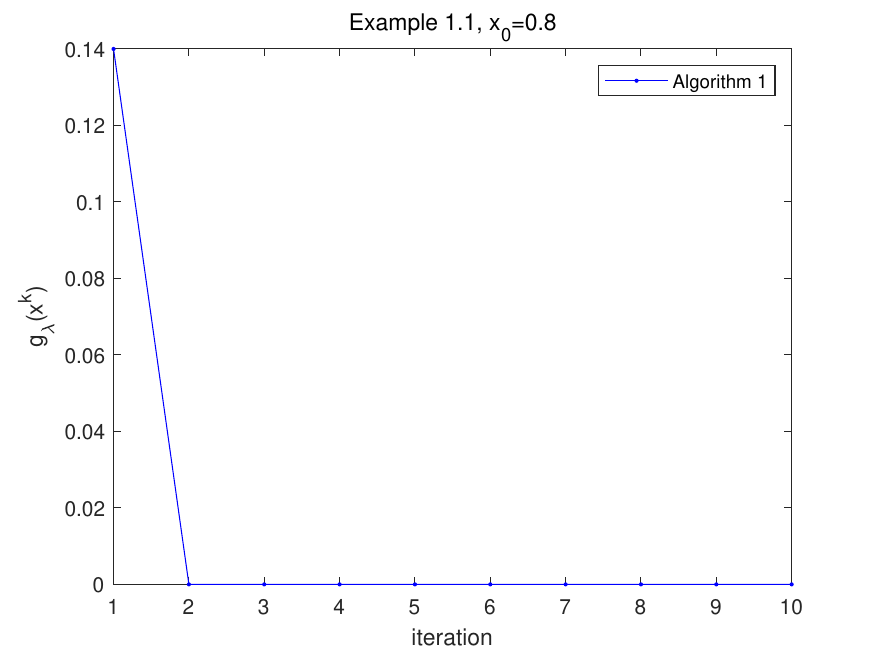}}
{\includegraphics[width=0.3\textwidth]{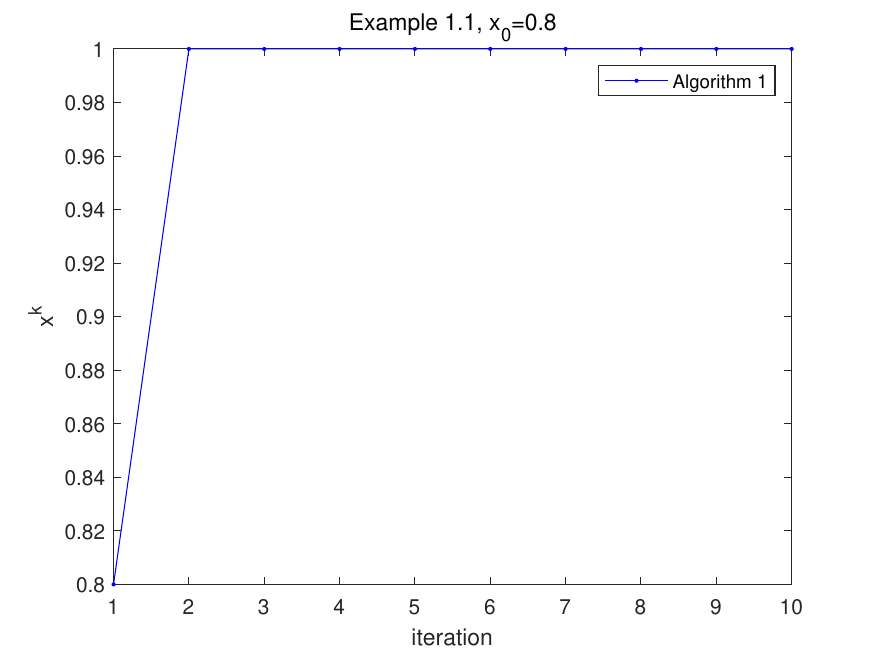}}
\vskip -0.1in
\caption{Left: Convergence of Algorithm~\ref{alg:PGOPg} on Example~\ref{exp:nvip_1}; Right: Solution generated by Algorithm~\ref{alg:PGOPg} on Example~\ref{exp:nvip_1}}\label{fig:num-exp1}
\end{center}
%\vspace{-1cm}
\end{figure}
\textbf{Example~\ref{exp:nvip_2}.} The numerical results are displayed in Figure~\ref{fig:exp2}. The left plot of Figure~\ref{fig:exp2} illustrates convergence of Algorithm~\ref{alg:PGOPg} to the solution of Example~\ref{exp:nvip_2} ($g_{\lambda}=0$).

\textbf{Example~\ref{exp:rsm}.} The right plot of Figure \ref{fig:exp2} illustrates convergence of Algorithm~\ref{alg:PGOPg} to the solution of Example~\ref{exp:rsm} ($g_{\lambda}=0$).
\begin{figure}[ht]
\begin{center}
{\includegraphics[width=0.3\textwidth]{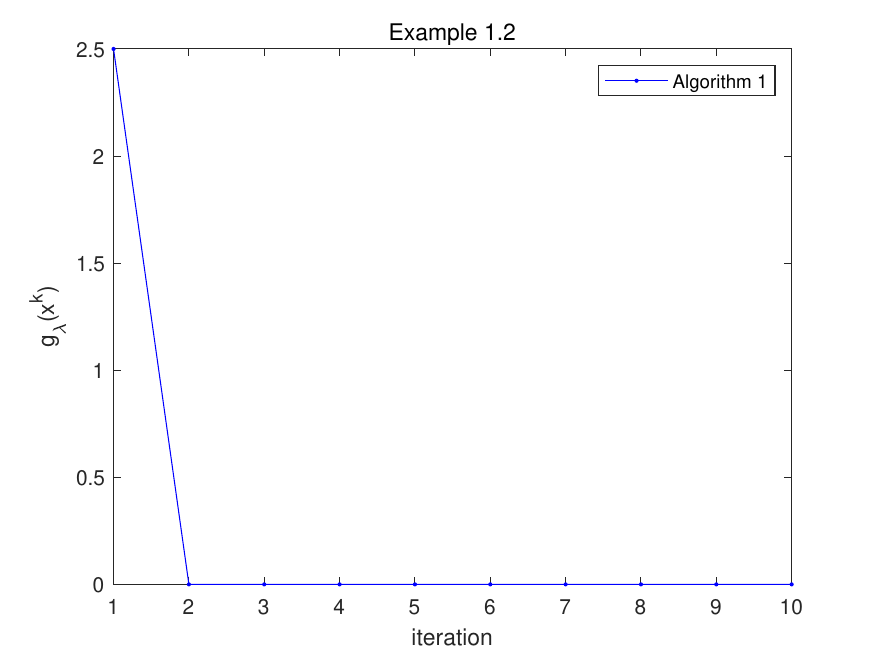}}
{\includegraphics[width=0.3\textwidth]{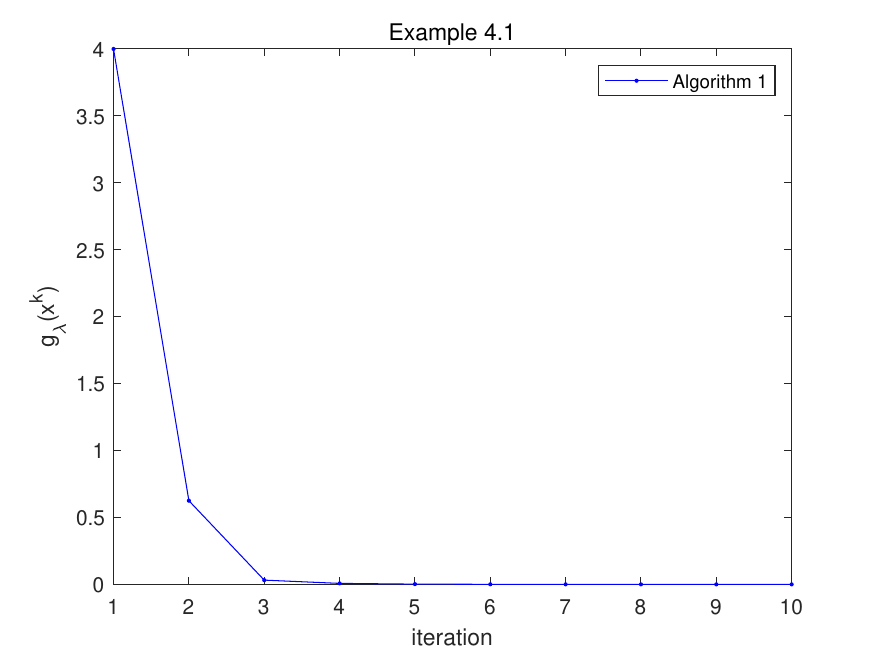}}
\vskip -0.1in
\caption{Convergence of Algorithm~\ref{alg:PGOPg} on Example~\ref{exp:nvip_2} (left) and Example~\ref{exp:rsm} (right)}\label{fig:exp2}
\end{center}
%\vspace{-1cm}
\end{figure}
\subsection{Bimatrix game}
In this subsection, we shall experiment with bimatrix game:
\begin{equation}\label{numeq:bi-m-game}
\max\limits_{x_1\in\Delta_1}\;x_1^{\top}Ax_2,\quad \max\limits_{x_2\in\Delta_2}\;x_1^{\top}Bx_2 
\end{equation}
where $x=(x_1^{\top},x_2^{\top})^{\top}$, $A, B\in\RR^{n_1\times n_2}$, and $\Delta_i=\{x_i\in\RR_+^{n_i} : \mathbf{1}_{n_i}^{\top}x_i-1=0\}$, $i= 1, 2$. By~\cite{FFP07}, model~\eqref{numeq:bi-m-game} can be written as the following (VIP):
\begin{equation}\label{numeq:VIP-bi-m-game}
\begin{aligned}
&\mbox{Find $x^*\in\Gamma$ such that} \\
&\langle F(x^*),x-x^*\rangle\geq0,\,\, \forall x=(x_1,x_2)^{\top}\in\Delta_1\times\Delta_2,   
\end{aligned}
\end{equation} 
where $F(x)=Mx$ with $M=\left(\begin{array}{cc}\mathbf{O}_{n_1\times n_1}&-A\\-B^{\top}&\mathbf{O}_{n_2\times n_2}\end{array}\right)$. In this case, $\nabla F=M^{\top}$, and the gradient of the gap function is
$\nabla g_{\lambda}(x)=Mx+M^{\top}[x-y_{\lambda}(x)]+\frac{1}{\lambda}[y_{\lambda}(x)-x]$, with $y_{\lambda}(x)=\proj_{\Delta_1\times\Delta_2}(x-\lambda Mx)$. 
%Then the scheme of 
The process of 
Algorithm~\ref{alg:PGOPg} for this problem is $x^{k+1}=\proj_{\Delta_1\times\Delta_2}(x^k-\alpha\nabla g_{\lambda}(x^k))$.

In our experiments, we test Algorithms~\ref{alg:PGOPg} and~\ref{alg:hgap}. We consider the example in equation (3.3) of~\cite{NRTV07}: 
$A=\left(
\begin{array}{ccc}
3 & 2& 0\\ 3 & 5& 6
\end{array}
\right)^{\top}$, 
and 
$B=\left(
\begin{array}{ccc}
3  & 2  & 3 \\ 2 & 6 & 1
\end{array}
\right)^{\top}$. 
It is easy to check that $x^*=(0,1/3,2/3,1/3,2/3)^{\top}$ is the solution to~\eqref{numeq:VIP-bi-m-game}. Therefore, we utilize $\|x^k-x^*\|$ to evaluate the performance of Algorithm~\ref{alg:PGOPg}. If we select the initial point $x^0 =(0,1/2,1/2,1/2,1/2)^{\top}$, Algorithm~\ref{alg:PGOPg} converges to the solution (see the left picture of Figure~\ref{fig:num-bi-m-game}). However, this does not guarantee that Algorithm~\ref{alg:PGOPg} converges to the solution of the problem in general cases. If we select the initial point $x^0 =(1/3,1/3,1/3,1/2,1/2)^{\top}$, which can also be used as an initial point of Algorithm~\ref{alg:hgap}, Algorithm~\ref{alg:PGOPg} only converges to the point $(0.3548,0.2742,0.3710,1/2,1/2)^{\top}$ which is not the solution of the problem~\eqref{numeq:VIP-bi-m-game}. However, Algorithm~\ref{alg:hgap} can generally converge to the solution of the problem~\eqref{numeq:VIP-bi-m-game}. The comparison between the convergence behavior of Algorithm 1 and Algorithm 3 is shown on the right of Figure~\ref{fig:num-bi-m-game}. 
\begin{figure}[ht]
\begin{center}
{\includegraphics[width=0.3\textwidth]{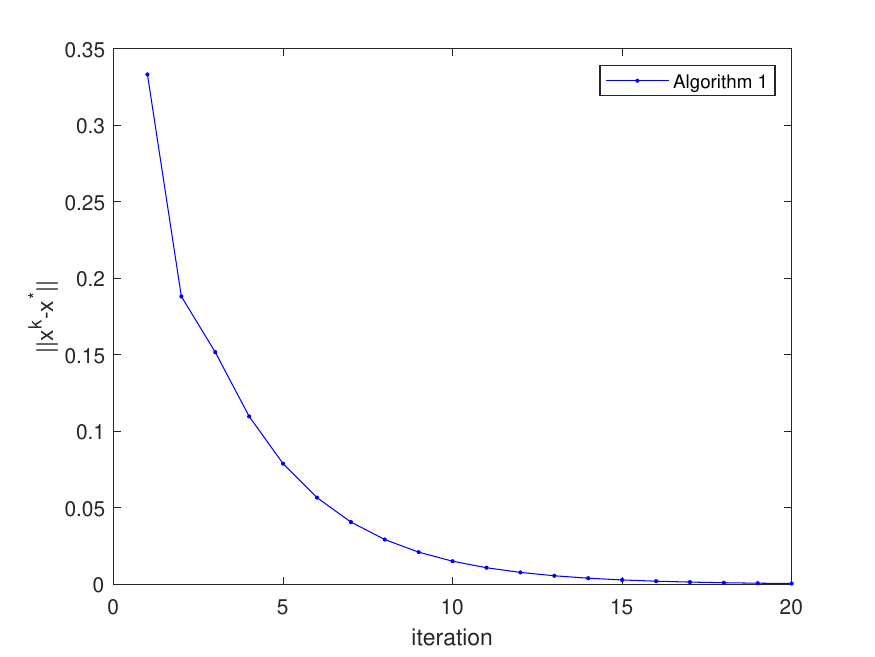}}
{\includegraphics[width=0.3\textwidth]{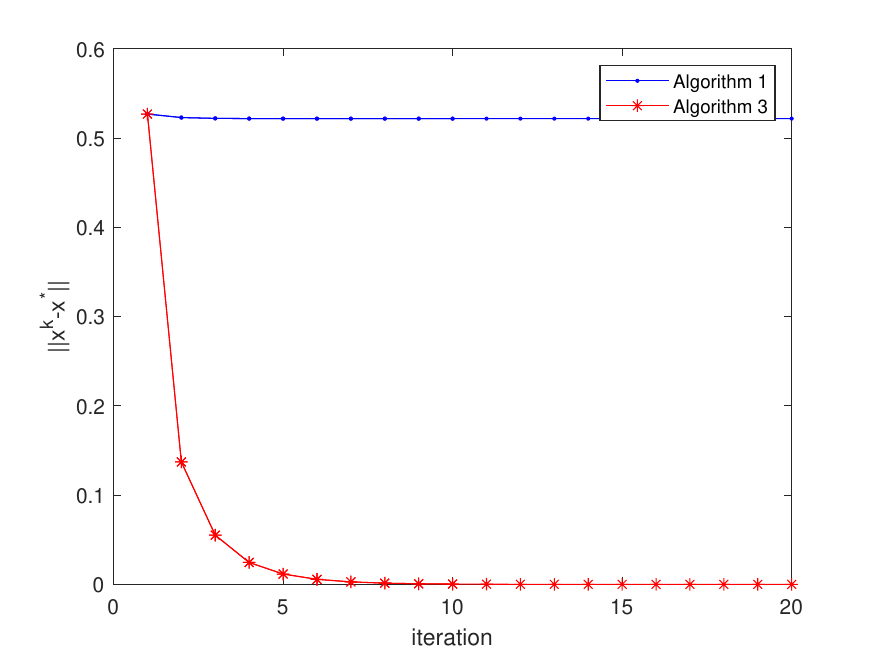}}
\vskip -0.1in
\caption{Left: Convergence of Algorithm~\ref{alg:PGOPg} with initial point $(0,1/2,1/2,1/2,1/2)^{\top}$; Right: Comparison of Algorithms~\ref{alg:PGOPg} and~\ref{alg:hgap} with initial point $(1/3,1/3,1/3,1/2,1/2)^{\top}$.}\label{fig:num-bi-m-game}
\end{center}
%\vspace{-1cm}
\end{figure}
In our experiences with more examples, we observe the same phenomenon. We randomly generate more test cases to compare the computational capabilities of Algorithm~\ref{alg:PGOPg} and Algorithm~\ref{alg:hgap}. We randomly generate date of matrices $A$ and $B$ with $n_1=\{3, 6, 9, 12, 15, 18\}$, $n_2=\{2, 4, 6, 8, 10, 12\}$, and each element of $A$ (or $B$) is randomly selected from the sets $\{0,1,...,10\}$, $\{0,1,...,30\}$, $\{0,1,...,50\}$, $\{0,1,...,70\}$, $\{0,1,...,90\}$, and $\{0,1,...,110\}$. We use the value of $g_{\lambda}(x^k)$ to measure the solution of problem~\eqref{numeq:VIP-bi-m-game}. Figure~\ref{fig:num-bi-m-game-2} illustrate computational results of Algorithms~\ref{alg:PGOPg} and~\ref{alg:hgap} on~\eqref{numeq:VIP-bi-m-game}, which shows that  Algorithm~\ref{alg:PGOPg} is not guaranteed to always converge to the global solution, but Algorithm~\ref{alg:hgap} does. 
\begin{figure}[ht]
\begin{center}
{\includegraphics[width=0.3\textwidth]{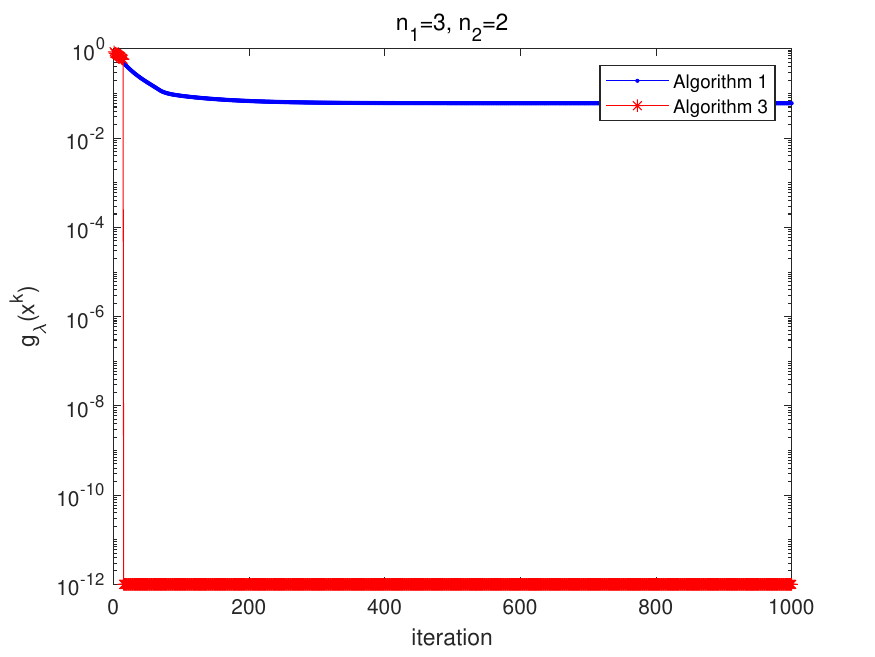}}
{\includegraphics[width=0.3\textwidth]{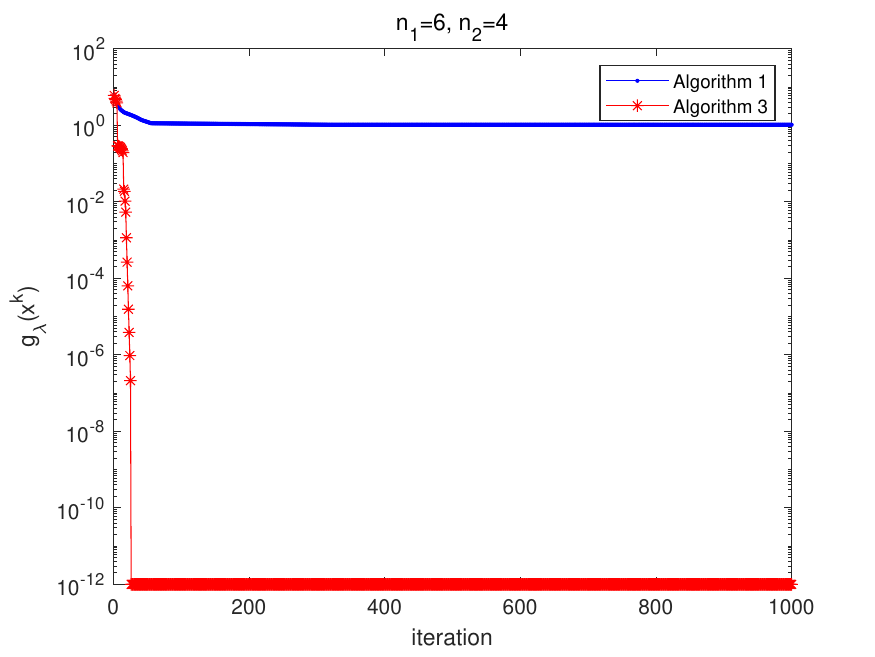}}
{\includegraphics[width=0.3\textwidth]{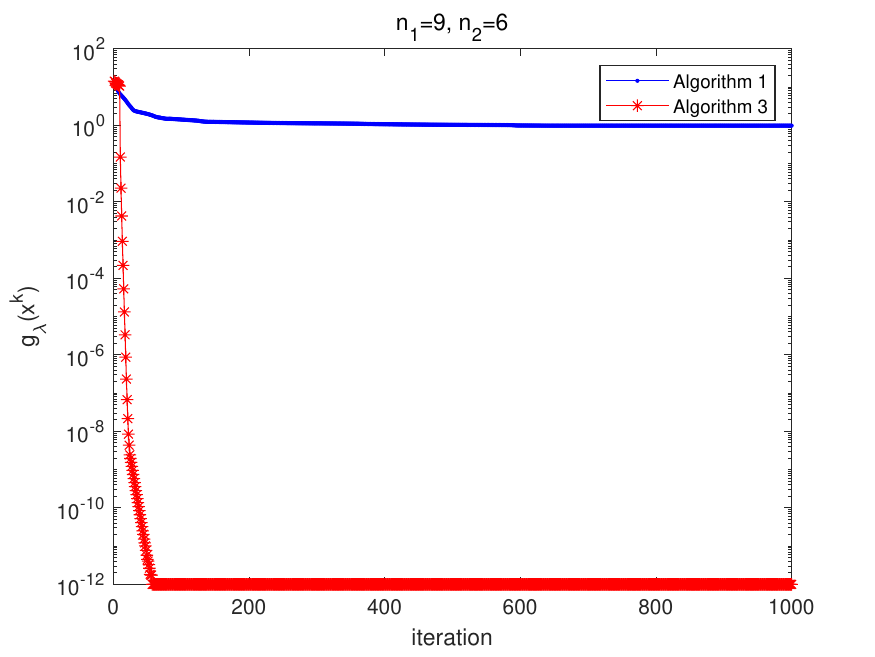}}\\
{\includegraphics[width=0.3\textwidth]{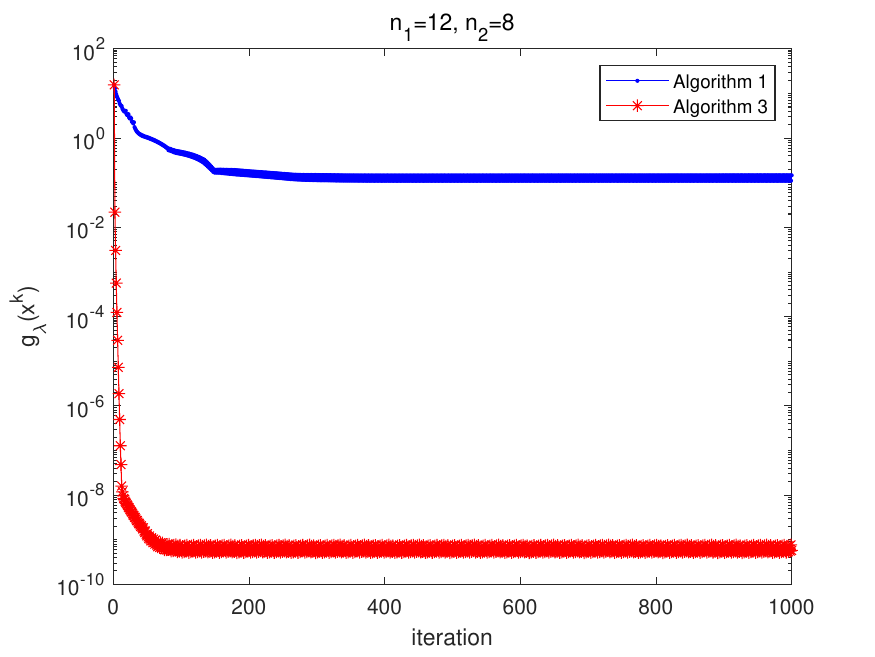}}
{\includegraphics[width=0.3\textwidth]{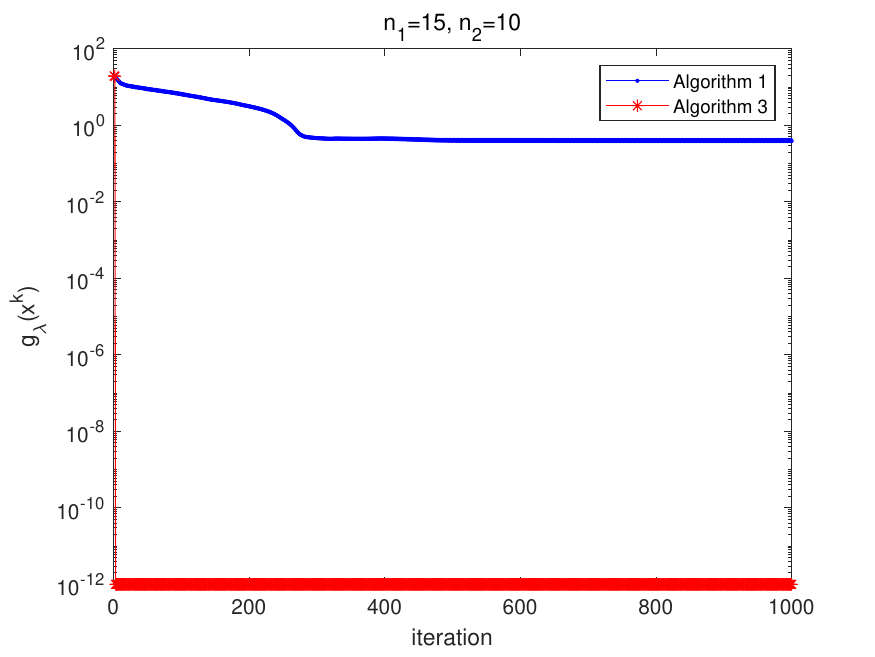}}
{\includegraphics[width=0.3\textwidth]{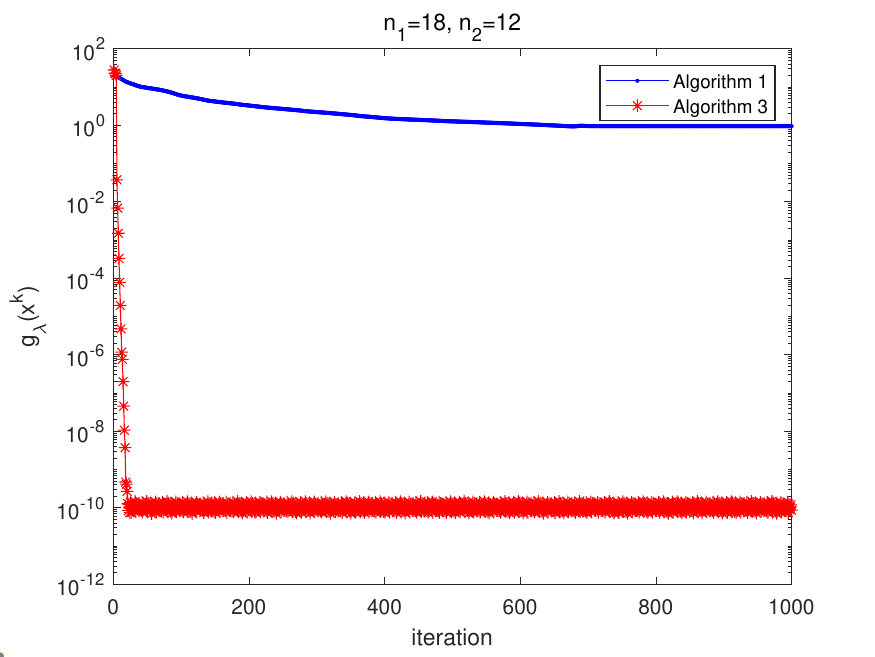}}

\vskip -0.1in
\caption{Comparison of Algorithm~\ref{alg:PGOPg} and~\ref{alg:hgap}.}\label{fig:num-bi-m-game-2}
\end{center}
%\vspace{-1cm}
\end{figure}

\subsection{Affine traffic equilibrium problem}

Traffic equilibrium is a way to describe how drivers choose their routes in a road network when every driver acts in accordance with his/her own self-interest. Instead of a central planner assigning routes, each driver picks the path that minimizes his/her own travel time (or cost). A user equilibrium means no driver can reduce his/her travel cost by unilaterally changing routes. The Traffic Equilibrium Problem (TEP) always considers a strongly connected network (a network is strongly connected if it contains at least one directed route from every node to every other node) $G(\mathcal{N},\mathcal{A})$, where $\mathcal{N}$ and $\mathcal{A}$ denote the sets of nodes and arcs, respectively.  Let $RS$ denote the set of origin-destination (O-D) pairs. For an O-D pair $rs\in RS$, let $q^{rs}$ be its traffic demand and $q=(q^1,...,q^{|RS|})^{\top}$ with $|RS|$ is the counts of O-D pairs. The independent variables are a set of path flows, denoted as $h_p^{rs}$, that must satisfy $\sum_{p\in P^{rs}}h_p^{rs}=q^{rs},\, \forall rs\in RS$, where $P^{rs}$ is a set of simple paths connecting $rs$. We denote $n=\sum\limits_{rs\in RS}|P^{rs}|$ to be the count of paths. Let $h\in\RR^{n}$ be the vector which is composed of $h_p^{rs}$. Then above equations can be rewritten as $Bh-q=0$, with $B\in\RR^{|RS|\times n}$. 

Furthermore, all path-flows are restricted to be non-negative to ensure a meaningful solution; that is, $h_p^{rs}\geq0,\, \forall rs\in RS, p\in P^{rs}$.
Let $f_a$ denote the traffic flow on link $a$. Then, the total flow on link $a$ is simply the sum of all paths using that link $f_a=\sum_{rs\in RS}\sum_{p\in P^{rs}}h_p^{rs}\delta_{pa}^{rs},\, \forall a\in\mathcal{A}$, where $\delta_{pa}^{rs}=1$, if link $a$ is on path $p$ connecting $rs$, and $0$ otherwise. Let $A\in\RR^{|\mathcal{A}|\times n}$ be the link-route incidence matrix which is composed of $\delta_{pa}^{rs}$.

The cost is often defined as a mapping of link flows~\cite{ND84}; that is $T=T(f),\quad\mbox{where}\quad f=(f_1,...,f_{|\mathcal{A}|})^{\top}$. The (TEP) can be formulated as a Variational Inequality Problem (VIP):
\[
\begin{array}{lc}
\mbox{(TEP):}  &\mbox{find non-negative vector $h^*$ such that}\\ &\langle A^{\top}T(Ah^*),h-h^*\rangle\geq0,\, \forall h\in\Gamma,    
\end{array}
\]
where $\Gamma=\left\{h : Bh-q=0\right\}$. If the mapping $T$ is affine and does not necessarily satisfy Assumption~1 of Reference~\cite{ND84}, it is generally difficult to ensure that (TEP) is monotone. However, Algorithm~\ref{alg:hgap} can be used to solve (TEP) in such cases. 

In this example, we use a larger network considered in~\cite{XBLC18} and \cite{YL24}. The network consists of $13$ nodes, $19$ links, and $4$ OD pairs (see Figure \ref{fig:NguyenDupuis}). The demand of each OD pair is provided as $q^{12}=400$, $q^{13}=800$, $q^{42}=600$, and $q^{43}=200$. The link travel time function is the asymmetric affine function mentioned in~\cite{ND84}: $T(f)=Cf+d$.

If $C$ is non-monotonic, then the mapping $A^{\top}T(Ah^*)$ may also be non-monotonic. Figure~\ref{fig:num-tep} illustrates the convergence of Algorithm~\ref{alg:PGOPg} and Algorithm~\ref{alg:hgap} under three non-monotonic selections of $C$ and $d$ (Left: $C=J_{|\mathcal{A}|}I_{|\mathcal{A}|}$, $d=\mathbf{1}_{|\mathcal{A}|}$; Middle: $C=J_{|\mathcal{A}|}I_{|\mathcal{A}|}(0.125,0.1,0.1,0.05,0.075,0.075,0.125,0.05,$ $0.125,0.125,0.05,0.05,0.025,0.05,0.1,0.025,0.1,0.1)^{\top}$, $d=(7, 9, 9, 12, 3, 9, 5, 13, 5, 9, 9, 10, 9, 6, 9, 8, 7, 14, 11)^{\top}$; Right: $C=J_{|\mathcal{A}|}X$, $d=y$), where $J_{|\mathcal{A}|}=\left(\begin{array}{ccc}
     0&\dots&1  \\
     \vdots&\iddots&\vdots\\
     1&\dots&0
\end{array}\right)_{|\mathcal{A}|\times|\mathcal{A}|}$, $X=Y+I_{|\mathcal{A}|}Z$ with $Y$ being a $|\mathcal{A}|\times|\mathcal{A}|$ random matrix with each element following a uniform distribution between $0$ and $0.1$, and $Z$ being a $|\mathcal{A}|\times|\mathcal{A}|$ random matrix with each element following a uniform distribution between $0$ and $1$, and $y$ is random vector with each element following a uniform distribution between $0$ and $10$.

\begin{figure}
\centering
\scriptsize
		\tikzstyle{format}=[rectangle,draw,thin,fill=white]
		\tikzstyle{test}=[diamond,aspect=2,draw,thin]
		\tikzstyle{point}=[coordinate,on grid,]
\begin{tikzpicture}
[%?????????latex ?????
>=latex,
%???????????
node distance=5mm,
% hv path ???????????????????????????????vh ????skip loop ???
%???-??-??? vskip loop ?????-???-??
 ract/.style={draw=blue!50, fill=blue!5,rectangle,minimum size=6mm, very thick, font=\itshape, align=center},
 racc/.style={rectangle, align=center},
 ractm/.style={draw=red!100, fill=red!5,rectangle,minimum size=6mm, very thick, font=\itshape, align=center},
 cirl/.style={draw, fill=yellow!20,circle,   minimum size=6mm, very thick, font=\itshape, align=center},
 raco/.style={draw=green!500, fill=green!5,rectangle,rounded corners=2mm,  minimum size=6mm, very thick, font=\itshape, align=center},
 hv path/.style={to path={-| (\tikztotarget)}},
 vh path/.style={to path={|- (\tikztotarget)}},
 skip loop/.style={to path={-- ++(0,#1) -| (\tikztotarget)}},
 vskip loop/.style={to path={-- ++(#1,0) |- (\tikztotarget)}}]
        \node (1) [cirl]{\baselineskip=3pt\small 1};
        \node (1a) [racc, above of = 1]{\baselineskip=3pt\footnotesize Origin};
        \node (1b) [racc, right of = 1, xshift=0,yshift=5]{\baselineskip=3pt\footnotesize 2};
        \node (1c) [racc, below of = 1, xshift=-4, yshift=-2]{\baselineskip=3pt\footnotesize 1};
        \node (12) [cirl, right of = 1, xshift=20]{\baselineskip=3pt\small 12};
        \node (12b) [racc, right of = 12, xshift=22, yshift=-15]{\baselineskip=3pt\footnotesize 18};
        \node (12c) [racc, below of = 12, xshift=-6, yshift=-2]{\baselineskip=3pt\footnotesize 17};
        \node (5) [cirl, below of = 1, yshift=-20]{\baselineskip=3pt\small 5};
        \node (5b) [racc, right of = 5, xshift=0, yshift=5]{\baselineskip=3pt\footnotesize 5};
        \node (5c) [racc, below of = 5, xshift=-4, yshift=-2]{\baselineskip=3pt\footnotesize 6};
        \node (9) [cirl, below of = 5, yshift=-20]{\baselineskip=3pt\small 9};
        \node (9b) [racc, right of = 9, xshift=0, yshift=5]{\baselineskip=3pt\footnotesize 12};
        \node (9c) [racc, right of = 9, xshift=-4, yshift=-20]{\baselineskip=3pt\footnotesize 13};
        \node (4) [cirl, left of = 5, xshift=-20]{\baselineskip=3pt\small 4};
        \node (4a) [racc, above of = 4]{\baselineskip=3pt\footnotesize Origin};
        \node (4b) [racc, right of = 4, xshift=0, yshift=5]{\baselineskip=3pt\footnotesize 3};
        \node (4c) [racc, right of = 4, xshift=-2, yshift=-20]{\baselineskip=3pt\footnotesize 4};
        \node (6) [cirl, right of = 5, xshift=20]{\baselineskip=3pt\small 6};
        \node (6b) [racc, right of = 6, xshift=0, yshift=5]{\baselineskip=3pt\footnotesize 7};
        \node (6c) [racc, below of = 6, xshift=-4, yshift=-2]{\baselineskip=3pt\footnotesize 8};
        \node (7) [cirl, right of = 6, xshift=20]{\baselineskip=3pt\small 7};
        \node (7b) [racc, right of = 7, xshift=0, yshift=5]{\baselineskip=3pt\footnotesize 9};
        \node (7c) [racc, below of = 7, xshift=-6, yshift=-2]{\baselineskip=3pt\footnotesize 10};
        \node (8) [cirl, right of = 7, xshift=20]{\baselineskip=3pt\small 8};
        \node (8c) [racc, below of = 8, xshift=-6, yshift=-2]{\baselineskip=3pt\footnotesize 11};
        \node (10) [cirl, below of = 6, yshift=-20]{\baselineskip=3pt\small 10};
        \node (10b) [racc, right of = 10, xshift=2, yshift=5]{\baselineskip=3pt\footnotesize 14};
        \node (11) [cirl, below of = 7, yshift=-20]{\baselineskip=3pt\small 11};
        \node (11b) [racc, right of = 11, xshift=2, yshift=5]{\baselineskip=3pt\footnotesize 15};
        \node (11c) [racc, below of = 11, xshift=-6, yshift=-2]{\baselineskip=3pt\footnotesize 16};
        \node (2) [cirl, below of = 8, yshift=-20]{\baselineskip=3pt\small 2};
        \node (2a) [racc, below of = 2]{\baselineskip=3pt\footnotesize Destination};
        \node (13) [cirl, below of = 10, yshift=-20]{\baselineskip=3pt\small 13};
        \node (13b) [racc, right of = 13, xshift=2, yshift=5]{\baselineskip=3pt\footnotesize 19};
        \node (3) [cirl, below of = 11, yshift=-20]{\baselineskip=3pt\small 3};
        \node (3a) [racc, below of = 3]{\baselineskip=3pt\footnotesize Destination};
        \path %(a) edge[->] (b)
              (1) edge[->] (12)
              (1) edge[->] (5)
              (4) edge[->] (5)
              (5) edge[->] (6)
              (12) edge[->] (6)
              (6) edge[->] (7)
              (7) edge[->] (8)
              (12) edge[->] (8)
              (4) edge[->] (9)
              (5) edge[->] (9)
              (6) edge[->] (10)
              (7) edge[->] (11)
              (8) edge[->] (2)
              (9) edge[->] (10)
              (10) edge[->] (11)
              (11) edge[->] (2)
              (9) edge[->] (13)
              (11) edge[->] (3)
              (13) edge[->] (3);
\end{tikzpicture}
\caption{Nguyen and Dupius' Road Network (see~\cite{ND84})}\label{fig:NguyenDupuis}
\end{figure}
\begin{figure}
\centering
{\includegraphics[width=0.3\textwidth]{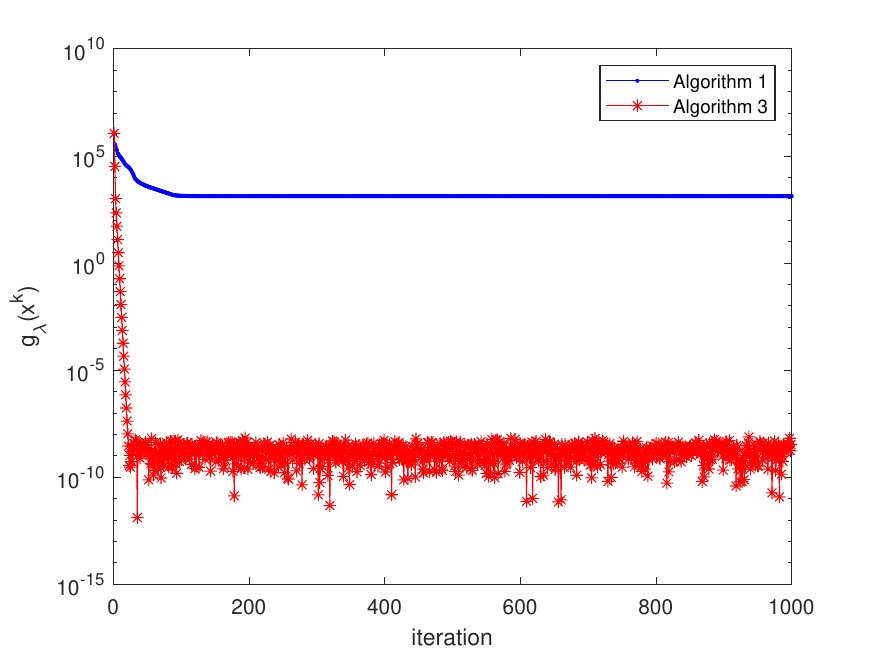}}
{\includegraphics[width=0.3\textwidth]{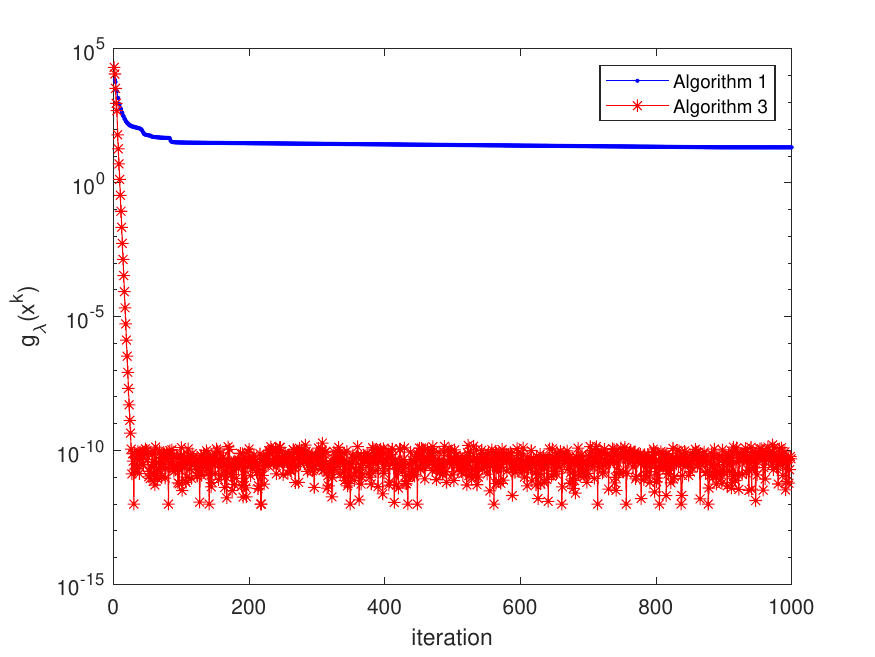}} 
{\includegraphics[width=0.3\textwidth]{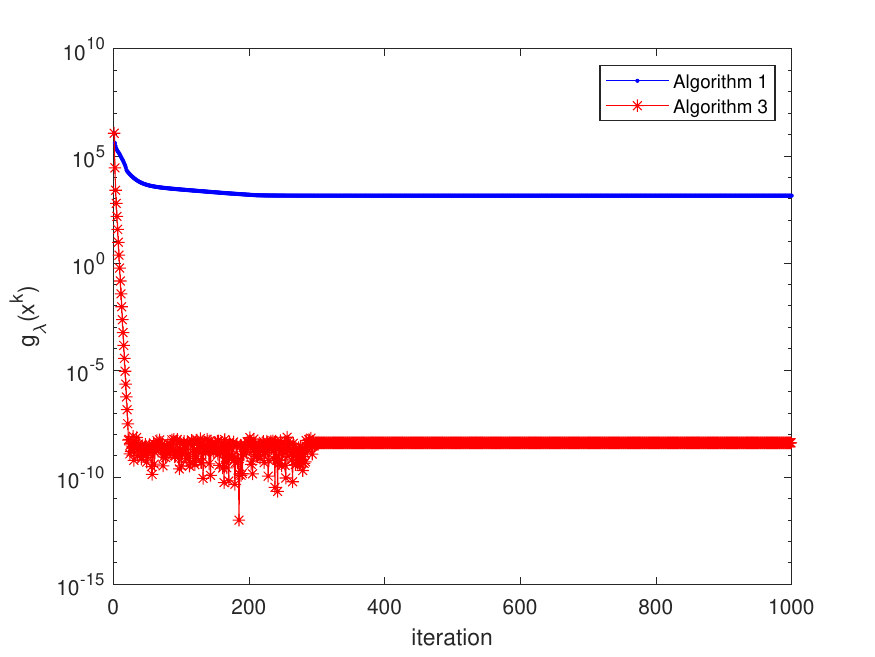}} 
\caption{Comparison of Algorithms~\ref{alg:PGOPg} and~\ref{alg:hgap} on Affine TEP.}\label{fig:num-tep}
\end{figure}

\subsection{Generative adversarial networks (GAN)}
In the original GAN paper~\cite{GAN14, GAN20}, the objective is formulated as a zero-sum game where the cost function of the discriminator $D_{\varphi}$ is given by the negative log-likelihood of the binary classification task between real or fake generated from $q_{\theta}$ by the generator,
\[
\min_{\theta}\max_{\varphi}\mathcal{L}(\theta,\varphi)\quad\mbox{where}\quad\mathcal{L}(\theta,\varphi):=-\EE_{x\sim p}\left[\log D_{\varphi}(x)\right]-\EE_{z\sim q_{\theta}}\left[\log(1-D_{\varphi}(z))\right].
\]
We take a toy GAN as an example:
\begin{equation}\label{eq:toy-GAN}
\min_{\theta\in\RR^{d_1}}\max_{\varphi\in\RR^{d_2}}\ \mathcal{L}(\theta,\varphi)=-\log(1+e^{-\varphi^\top\omega^*})-\log(1+e^{\varphi^{\top}\theta}),
\end{equation}
where $\omega^*\in\RR^{d_2}$. By~\cite{GBVVL18}, the variational inequality reformulation of the toy GAN is:
\begin{equation}\label{VIP_toy_GAN}
\begin{aligned}
&\mbox{Find $x^*\in\setX$ such that}\\
&\langle F(x^*),x-x^*\rangle\geq0,\quad\forall x\in\setX,
\end{aligned}
\end{equation}
with $x=(\theta,\phi)$, $\setX=\RR^{d_1}\times\RR^{d_2}$ and $F(x)=\left(\begin{aligned}\nabla_{\theta}\mathcal{L}(\theta,\varphi)\\ -\nabla_{\varphi}\mathcal{L}(\theta,\varphi)  
\end{aligned}\right)=\left(\begin{array}{c}-\frac{e^{\varphi^{\top}\theta}\varphi}{1+e^{\varphi^{\top}\theta}}\\-\frac{e^{-\varphi^{\top}\omega^*}\omega^*}{1+e^{-\varphi^{\top}\omega^*}}+\frac{e^{\varphi^{\top}\theta}\theta}{1+e^{\varphi^{\top}\theta}}\end{array}\right)$. 

If $d_1=d_2=1$, and $\omega^*=-2$, we can easily check that the solution is $(\theta^*,\varphi^*)=(-2,0)$. Additionally, let $x_1=(1,0.8)^{\top}$ and $x_2=(0.7,0.8)^{\top}$. We have $\langle F(x_1)-F(x_2),x_1-x_2\rangle=\langle(-0.0428,0.2445)^{\top},(0.3,0)^{\top}\rangle=-0.0128<0$. Thus the mapping $F(x)$ is non-monotone. Next, we are going to show that the solution set of Minty variational inequality (MVI, $\langle F(x),x-x^*\rangle\geq0$, $\forall x\in\setX$) of this problem is empty. Let $x=(3,1)^{\top}$, then $\langle F(x),x-x^*\rangle=\langle(-0.9526,
    4.6193)^{\top},(5,1)^{\top}\rangle=-0.1436<0$. With a carefully selected initial point, by using Algorithm~\ref{alg:PGOPg}, we can obtain the solution of this simple GAN.

Furthermore, for a generalized GAN, minimizing the gap function $g_{\lambda}(x)$ degenerates into the following optimization problem: $\min_{x\in\RR^d}\frac{\lambda}{2}\|F(x)\|^2$, which reproduces the consensus optimization (CO) training method of GANs (see \cite{MNG17}). The CO method can prevent the gradient descent-ascent (GDA) approach from circling around the initial point during training. However, the pure CO can converge to unhelpful stationary points of GAN ($\lambda\nabla F(x^k)^{\top}F(x^k)=0$ without $F=0$). Therefore, we train a GAN using the following consensus optimization scheme (modified Algorithm~\ref{alg:PGOPg}): $
x^{k+1}=x^k-\alpha[\lambda\nabla F(x^k)^{\top}F(x^k)-F(x^k)]$.

This algorithm scheme can also be regarded as another approach to help the algorithm escape the point where $\lambda\nabla F(x^k)^{\top}F(x^k)=0$ and move closer to the solution. We tested the modified Algorithm~\ref{alg:PGOPg} on a Deep Convolutional Generative Adversarial Network (DCGAN) using the MNIST dataset. The network architecture of the DCGAN used in the experiment is shown on the left of Figure~\ref{fig:num-gan}, while the output of the generator after 50 iterations of the algorithm is displayed on the right of Figure~\ref{fig:num-gan}.
\begin{figure}[ht]
\begin{center}
{\includegraphics[width=0.69\textwidth]{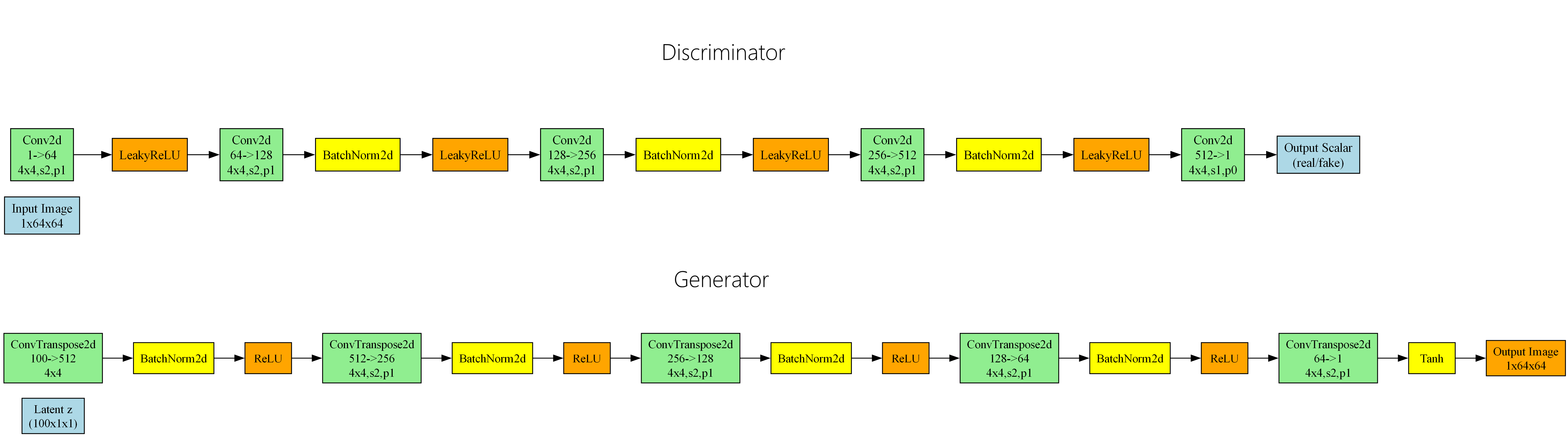}}
{\includegraphics[width=0.22\textwidth]{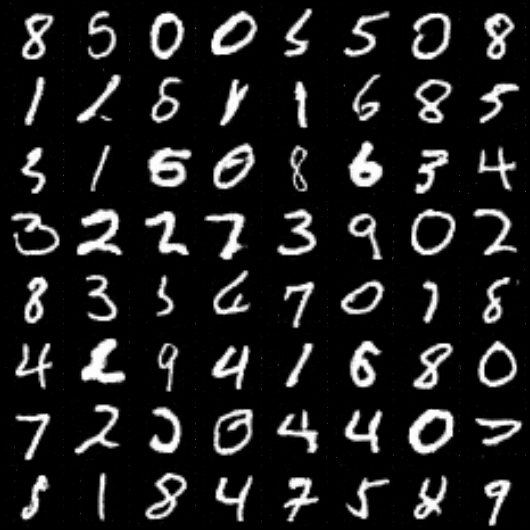}}
\vskip -0.1in
\caption{Modified Algorithm~\ref{alg:PGOPg} for GAN. Left: Network architecture of DCGAN; Right: Output of the generator after 50 iterations.}\label{fig:num-gan}
\end{center}
%\vspace{-1cm}
\end{figure}

{\bf Acknowledgments.}
L. Zhao and D. Zhu were partially supported by the National Key R\&D Program of China (Grant No.2023YFA0915202); the Major Project of the National Natural Science Foundation of China (Grant No.72293582); the Fundamental Research Funds for the Central Universities (the Interdisciplinary Program of Shanghai Jiao Tong University) (Grant No.YG2024QNA36). %L. Zhao was partially supported by the Startup Fund for Young Faculty at SJTU (SFYF at SJTU) (Grant No.22X010503839).

\appendix
%\appendices%\appendix
%\section{Examples to Illustrate the Subclasses Discussed in Section~\ref{sec:monotone}} \label{appendix}
\section{Proof of Proposition~\ref{prop}}\label{app:prop}
\begin{itemize}
\item[{\rm(i)}] Trivial.
\item[{\rm(ii)}] This follows immediately from the definitions $G_{\alpha}(x)$ and $E_{\alpha}(x)$. 
\item[{\rm(iii)}] Since $\nabla f$ is $L$-Lipschitz, one has
\begin{equation}\label{eq:E-phi}
\begin{aligned}
E_{\alpha}(x)=&f(x)+\langle\nabla f(x),T_{\alpha}(x)-x\rangle+r\left(T_{\alpha}(x)\right)+\frac{1}{2\alpha}\|x-T_{\alpha}(x)\|^2\\
      \geq&f\left(T_{\alpha}(x)\right)-\frac{L}{2}\|x-T_{\alpha}(x)\|^2+r\left(T_{\alpha}(x)\right)+\frac{1}{2\alpha}\|x-T_{\alpha}(x)\|^2\\
      =&\phi\left(T_{\alpha}(x)\right)+\frac{1}{2}\left(\frac{1}{\alpha}-L\right)\|x-T_{\alpha}(x)\|^2.
      \end{aligned}
\end{equation}
The optimality condition for the minimization problem in~\eqref{defi:Tk} %follows that
gives 
\begin{equation}\label{eq:13}
0\in\nabla f(x)+\partial r\left(T_{\alpha}(x)\right)+\frac{1}{\alpha}\left(T_{\alpha}(x)-x\right).
\end{equation}
%or $\qquad-\left(\nabla f(x)+\frac{1}{\epsilon}\nabla_{y}D\big{(}x,T_{D,\epsilon}(x)\big{)}\right)\in\partial_Lg\big{(}T_{D,\epsilon}(x)\big{)}$.\\
Since $r$ is l.s.c.\ and weakly convex with $\rho$, we have
\begin{equation}\label{eq:10}
\begin{aligned}
r(x)\geq&r\left(T_{\alpha}(x)\right)-\frac{\rho}{2}\|x-T_{\alpha}(x)\|^2-\langle\nabla f(x)+\frac{1}{\alpha}(T_{\alpha}(x)-x),x-T_{\alpha}(x)\rangle\\
\geq&r\left(T_{\alpha}(x)\right)-\frac{\rho}{2}\|x-T_{\alpha}(x)\|^2-\langle\nabla f(x),x-T_{\alpha}(x)\rangle+\frac{1}{\alpha}\|x-T_{\alpha}(x)\|^2.
\end{aligned}
\end{equation}
Adding $f(x)$ to both sides, by the definition of $E_{\alpha}(x)$, we have
\begin{equation}\label{eq:phi-E}
E_{\alpha}(x)\leq\phi(x)-\frac{1}{2}\left(\frac{1}{\alpha}-\rho\right)\|x-T_{\alpha}(x)\|^2.    
\end{equation}
Thus, by combining~\eqref{eq:E-phi} and~\eqref{eq:phi-E}, we obtain the desired result of this statement.
\item[{\rm(iv)}] By statement (i), $T_{\alpha}(x)$ is single-valued. From statements (ii) and~\eqref{eq:phi-E} we have
\[
G_{\alpha}(x)=\frac{1}{\alpha}\left(\phi(x)-E_{\alpha}(x)\right)\geq\frac{1}{2\alpha^2}\left(1-\alpha\rho\right)\|x-T_{\alpha}(x)\|^2.
\]
\item[{\rm(v)}] For $\alpha<\min\{1/L,1/\rho\}$, we have
\[
\alpha G_{\alpha}(x)=-\langle\nabla f(x), T_{\alpha}(x)-x\rangle-r\left(T_{\alpha}(x)\right)+r(x)-\frac{1}{2\alpha}\|x-T_{\alpha}(x)\|^2.
\]
Let $\nu\in\partial r(x)$. Thanks to the weakly-convex of $r$ we get
\begin{equation}
\begin{aligned}
\alpha G_{\alpha}(x)\leq&-\langle\nabla f(x), T_{\alpha}(x)-x\rangle-\langle\nu, T_{\alpha}(x)-x\rangle+\frac{\rho}{2}\|x-T_{\alpha}(x)\|^2-\frac{1}{2\alpha}\|x-T_{\alpha}(x)\|^2\nonumber\\
=&-\langle\nabla f(x)+\nu, T_{\alpha}(x)-x\rangle-\frac{1}{2}\left(\frac{1}{\alpha}-\rho\right)\|x-T_{\alpha}(x)\|^2\nonumber\\
\leq&\|\nabla f(x)+\nu\|\cdot\|x-T_{\alpha}(x)\|-\frac{1}{2}\left(\frac{1}{\alpha}-\rho\right)\|x-T_{\alpha}(x)\|^2\nonumber\\
\leq&\frac{\alpha}{2(1-\alpha\rho)}\|\nabla f(x)+\nu\|^2.
\end{aligned}
\end{equation}
Therefore, $G_{\alpha}(x)\leq\frac{1}{2(1-\alpha\rho)}\|\nabla f(x)+\nu\|^2$, $\forall\nu\in\partial r(x)$, and the claim is proven.
\item[{\rm(vi)}] This statement is a simple consequence of (iv) and (v). 
\item[{\rm(vii)}] By the optimality condition of the optimization problem defining $T_{\alpha}(x)$ we have
\[
0\in\nabla f(x)+\partial r(T_{\alpha}(x))+\frac{1}{\alpha}[T_{\alpha}(x)-x].
\]
It follows that $\nabla f(T_{\alpha}(x))-\nabla f(x)+\frac{1}{\alpha}[x-T_{\alpha}(x)]\in\partial\phi(T_{\alpha}(x))$. By the $L$-gradient Lipschitz property of $f$, we have $\dist\left(0,\partial\phi(T_{\alpha}(x))\right)\leq\left(L+\frac{1}{\alpha}\right)\|x-T_{\alpha}(x)\|$.
\end{itemize}
\section{Proof of Lemma~\ref{lemma:1}}\label{app:lem}
Denote $\Delta=\langle\nabla f(x),T_{\alpha}(x)-u\rangle+r\left(T_{\alpha}(x)\right)-r(u)$. First, we estimate the lower bound of $\Delta$:\\
\begin{equation}\label{eq:bound1-f-n}
\begin{aligned}
\Delta=&\langle\nabla f(x),T_{\alpha}(x)-u\rangle+r\left(T_{\alpha}(x)\right)-r(u)\nonumber\\
=&\langle\nabla f(x),T_{\alpha}(x)-x\rangle+\langle\nabla f(x),x-u\rangle+r\left(T_{\alpha}(x)\right)-r(u)\nonumber\\
\geq&f\left(T_{\alpha}(x)\right)-f(x)-\frac{L}{2}\|x-T_{\alpha}(x)\|^2+\langle\nabla f(x),x-u\rangle+r\left(T_{\alpha}(x)\right)-r(u)\nonumber\\
&\qquad\qquad\qquad\qquad\qquad\qquad\mbox{(since $f$ is gradient Lipschitz with modulus $L$)}\nonumber\\
=&\phi\left(T_{\alpha}(x)\right)-\phi(u)-\frac{L}{2}\|x-T_{\alpha}(x)\|^2+\underbrace{f(u)-f(x)-\langle\nabla f(x),u-x\rangle}_{\delta_1}.
\end{aligned}
\end{equation}
By the gradient $L$-Lipschitz continuity of $f$, we know that $f$ is also $L$-weakly convex (see~\cite{nesterov03} Lemma 1.2.3). Next, we estimate the term $\delta_1$ in~\eqref{eq:bound1-f-n}: $\delta_1=f(u)-f(x)-\langle\nabla f(u),u-x\rangle\geq-\frac{L}{2}\|x-u\|^2$. Therefore, we have
\begin{equation}\label{eq:bound1}
\Delta\geq\phi\left(T_{\alpha}(x)\right)-\phi(u)-\frac{L}{2}\|x-T_{\alpha}(x)\|^2-\frac{L}{2}\|x-u\|^2.
\end{equation}
Since $\alpha<\frac{1}{\rho}$, the minimization problem of~\eqref{defi:Tk} is $(\frac{1}{\alpha}-\rho)$-strongly convex. We have
\[
\begin{aligned}
&\langle\nabla f(x),u-T_{\alpha}(x)\rangle+r(u)-r\left(T_{\alpha}(x)\right)+\frac{1}{2\alpha}\|x-u\|^2-\frac{1}{2\alpha}\|x-T_{\alpha}(x)\|^2\\
\geq&\frac{1}{2}\left(\frac{1}{\alpha}-\rho\right)\|u-T_{\alpha}(x)\|^2.
\end{aligned}
\]
Therefore, 
\begin{equation}\label{eq:optimalcondition-gn1}
\begin{aligned}
\Delta=&\langle\nabla f(x),T_{\alpha}(x)-u\rangle+r\left(T_{\alpha}(x)\right)-r(u)\nonumber\\
\leq&\frac{1}{2\alpha}\|x-u\|^2-\frac{1}{2\alpha}\|x-T_{\alpha}(x)\|^2-\frac{1}{2}\left(\frac{1}{\alpha}-\rho\right)\|u-T_{\alpha}(x)\|^2.
\end{aligned}
\end{equation}
The desired result follows by
combing~\eqref{eq:bound1} with~\eqref{eq:optimalcondition-gn1}.

\section{Proof of Theorem~\ref{theo:n-s}}\label{app:n-s}
{\rm(i)} By taking $x=x^k$ and $u=x$, and noting $T_{\alpha}(x^k)=x^{k+1}$ and Lemma~\ref{lemma:1}, we have
\begin{equation}\label{eq:VI_linear_2}
\frac{1}{2}\left(\frac{1}{\alpha}-\rho\right)\|x-x^{k+1}\|^2\leq\phi(x)-\phi(x^{k+1})+\frac{1}{2}\left(\frac{1}{\alpha}+L\right)\|x-x^k\|^2-\frac{1}{2}\left(\frac{1}{\alpha}-L\right)\|x^k-x^{k+1}\|^2.
\end{equation}
Since $\{\phi(x^k)\}$ is strictly decreasing and $\phi(x^k)\geq\phi^*$. Note that the equality implies $x^k\in[\phi\leq\phi^*]$. Therefore, for given $x^0\in[\phi^*<\phi<\phi^*+\nu]$, we have $x^k\in[\phi^*<\phi<\phi^*+\nu]$. Let $x_k^*=\arg\min_{x\in[\phi\leq\phi^*]}\|x-x^k\|$, then $\phi(x_k^*)=\phi^*$. By taking $x=x_k^*$ in~\eqref{eq:VI_linear_2} and $\alpha<\frac{1}{L+\rho}$ and $\dist(x^{k+1},[\phi\leq\phi^*])\leq\|x_k^*-x^{k+1}\|$, we have 
\begin{equation}\label{eq:VI_linear_3}
\begin{aligned}
&\;\;\quad\quad\frac{1}{2}\left(\frac{1}{\alpha}-\rho\right)\dist^2(x^{k+1},[\phi\leq\phi^*])
%\\ &\;\;\quad
\leq\frac{1}{2}\left(\frac{1}{\alpha}-\rho\right)\|x_k^*-x^{k+1}\|^2\\
&\;\quad\overset{\eqref{eq:VI_linear_2}}{\leq}\phi^*-\phi(x^{k+1})+\frac{1}{2}\left(\frac{1}{\alpha}+L\right)\dist^2(x^k,[\phi\leq\phi^*])-\frac{1}{2}\left(\frac{1}{\alpha}-L\right)\|x^k-x^{k+1}\|^2\\
&\overset{\phi(x^{k+1})\geq\phi^*}{\leq}\frac{1}{2}\left(\frac{1}{\alpha}+L\right)\dist^2(x^k,[\phi\leq\phi^*])-\frac{1}{2}\left(\frac{1}{\alpha}-L\right)\|x^k-x^{k+1}\|^2.
\end{aligned}
\end{equation}
This, together with ($\ell$-PEB), yields
\[
\begin{aligned} 
    & \frac{1}{2}\left(\frac{1}{\alpha}-\rho\right)\dist^2(x^{k+1},[\phi\leq\phi^*]) \\ &\leq\frac{1}{2}\left(\frac{1}{\alpha}+L\right)\dist^2(x^k,[\phi\leq\phi^*])
    -\frac{1}{2c_1^2}\left(\frac{1}{\alpha}-L\right)\dist^2(x^k,[\phi\leq\phi^*]).
    \end{aligned}
    \]
Since $\alpha\leq\frac{1}{(c_1^2+1)(L+\rho)}$, we have $\frac{1}{c_1^2}\frac{1}{\alpha}-\left(1+\frac{1}{c_1^2}\right)L>\rho$ and $\frac{\left(1-\frac{1}{c_1^2}\right)\frac{1}{\alpha}+\left(1+\frac{1}{c_1^2}\right)L}{\frac{1}{\alpha}-\rho}=\frac{\frac{1}{\alpha}-\left[\frac{1}{c_1^2}\frac{1}{\alpha}-\left(1+\frac{1}{c_1^2}\right)L\right]}{\frac{1}{\alpha}-\rho}\in(0,1)$. Thus, $\dist^2(x^{k+1},[\phi\leq\phi^*])\leq\beta\dist^2(x^k,[\phi\leq\phi^*])$, where $\beta=\frac{\left(1-\frac{1}{c_1^2}\right)\frac{1}{\alpha}+\left(1+\frac{1}{c_1^2}\right)L}{\frac{1}{\alpha}-\rho}\in(0,1)$.
%%%%%%%%%%%%%%%
%%%%%%%%%%%%%%%%

(ii) By the weakly convexity of $r$ and $\alpha<\min\{1/L,1/\rho\}$, it follows that $T_{\alpha}(x)$ is single-valued. Let $T_{\alpha}(x)_p=\arg\min_{y\in[\phi\leq\phi^*]}\|y-T_{\alpha}(x)\|$.  For any $x\in[\phi^*<\phi<\phi^*+\nu]$, observe that 
\begin{equation}\label{TD}
\begin{aligned}
    & \dist\left(x, [\phi\leq\phi^*]\right)\leq \|x-T_{\alpha}(x)_p\|
    %\nonumber\\
    \leq \|T_{\alpha}(x)-T_{\alpha}(x)_p\|+\|x-T_{\alpha}(x)\|\nonumber\\
    =&\dist\left(T_{\alpha}(x),[\phi\leq\phi^*]\right)+\|x-T_{\alpha}(x)\| %\nonumber\\
    \leq \beta \dist\left(x,[\phi\leq\phi^*]\right)+\|x-T_{\alpha}(x)\|.\nonumber
    \end{aligned}
    \end{equation}
Therefore, $\dist\left(x,[\phi\leq\phi^*]\right)\leq\frac{1}{1-\beta}\|x-T_{\alpha}(x)\|$, which shows that the level-set error bound ($\ell$-PEB) %(($\ell$-PEB)) 
holds on $[\phi^*<\phi<\phi^*+\nu]$. Additionally, if $x\in[\phi\leq\phi^*]$, $ \dist\left(x, [\phi\leq\phi^*]\right)=0$, then the level-set error bound ($\ell$-PEB) also holds. 
Therefore, the level-set error bound ($\ell$-PEB) %(($\ell$-PEB)) 
holds on $[\phi^*\leq\phi<\phi^*+\nu]$.
\section{Proof of Proposition~\ref{prop:beb}}\label{app:rela_leb}
\begin{itemize}
\item[{\rm(i)}] {\bf ($\ell$-SEB)$\Rightarrow$($\ell$-PEB):} See Corollary 1 of~\cite{ZDLZ21};

{\bf ($\ell$-PEB)$\Rightarrow$($\ell$-SEB):} By the definition of ($\ell$-PEB), for any $x\in[\bar{\phi}\leq\phi<\bar{\phi}+\nu]$, we have
\[
\begin{aligned}
\dist(x,[\phi\leq\bar{\phi}])\leq &c_1\|x-T_{\alpha}(x)\|\\
\leq &c_1\left(\frac{\alpha}{1-\alpha\rho}\right)\dist(0,\partial\phi(x))\qquad\mbox{(by statement (vi) of Proposition~\ref{prop})}.
\end{aligned}
\]
\item[{\rm(ii)}] {\bf ($\ell$-PEB)$\Rightarrow$($\ell$-PGAP):} For $x\in[\bar{\phi}\leq\phi<\bar{\phi}+\nu]$, let $x_p\in[\phi\leq\bar{\phi}]$ such that $\|x-x_p\|=\dist(x,[\phi\leq\bar{\phi}])$. Observe
\begin{equation}\label{eq:PEB-PGAP}
\begin{aligned}
&\phi(x)-\bar{\phi}
=\phi(x)-E_{\alpha}(x)+E_{\alpha}(x)-\bar{\phi}\\
\leq&\phi(x)-E_{\alpha}(x)+f(x)+\langle\nabla f(x),x_p-x\rangle+r(x_p)+\frac{1}{2\alpha}\dist^2(x,[\phi\leq\bar{\phi}])-f(x_p)-r(x_p)\\
=&\phi(x)-E_{\alpha}(x)+f(x)-f(x_p)+\langle\nabla f(x),x_p-x\rangle+\frac{1}{2\alpha}\dist^2(x,[\phi\leq\bar{\phi}]).
\end{aligned}
\end{equation}
By the $L$-gradient Lipschitz property of $f$,~\eqref{eq:PEB-PGAP} yields that
\[
\begin{aligned}
&\phi(x)-\bar{\phi}
\leq \phi(x)-E_{\alpha}(x)+\frac{1}{2}\left(\frac{1}{\alpha}+L\right)\dist^2(x,[\phi\leq\bar{\phi}])\\
\leq&\phi(x)-E_{\alpha}(x)+\frac{1}{2}\left(\frac{1}{\alpha}+L\right)c_1\|x-T_{\alpha}(x)\|^2\;\;\qquad\mbox{(by the $\ell$-PEB property)}\\
=&\alpha G_{\alpha}(x)+\frac{1}{2}\left(\frac{1}{\alpha}+L\right)c_1\|x-T_{\alpha}(x)\|^2\quad\qquad\mbox{(by statement (ii) of Proposition~\ref{prop})}\\
\leq&\alpha\left(1+\frac{(1+\alpha L)c_1}{1+\alpha\rho}\right)G_{\alpha}(x) \;\quad\quad\qquad\qquad \mbox{(by statement (iv) of Proposition~\ref{prop})}
\end{aligned}
\]
The desired result follows.

{\bf ($\ell$-PGAP)$\Rightarrow$($\ell$-KL):} By the definition of ($\ell$-PGAP), for any $x\in[\bar{\phi}\leq\phi<\bar{\phi}+\nu]$, we have
\[
\begin{aligned}
c_4[\phi(x)-\bar{\phi}]\leq&G_{\alpha}(x)\\
\leq&\frac{1}{2(1-\alpha\rho)}\dist^2(0,\partial\phi(x)) \quad\mbox{(by statement (v) of Proposition~\ref{prop})}
\end{aligned}
\]
\end{itemize}
\section{Proof of Proposition~\ref{prop:beb-2}} \label{app:rela_leb_2}
{\rm(i)} By the definition of ($\ell$-PEB) on $[\phi^*\leq\phi<\phi^*+\nu]$, for any $x\in[\phi^*\leq\phi<\phi^*+\nu]$, we have 
\[
\begin{aligned}
&\dist^2(x,[\phi\leq\phi^*])
\leq c_1^2\|x-T_{\alpha}(x)\|^2\\
\leq&\left(\frac{2c_1^2}{\frac{2}{\alpha}-L-\rho}\right)[\phi(x)-\phi(T_{\alpha}(x))]\quad\mbox{(by statement (iii) of Proposition~\ref{prop})}\\
=&\left(\frac{2c_1^2}{\frac{2}{\alpha}-L-\rho}\right)\left[\phi(x)-\phi^*\right]\qquad\quad\;\;\mbox{(since $\phi(T_{\alpha}(x))\geq\phi^*$)}
\end{aligned}
\]

{\rm(ii)} By the definition of (RSI), $\forall x\in[\phi^*\leq\phi<\phi^*+\nu]$ and $\xi\in\partial f(x)$, we have
\begin{equation}\label{eq:RSI_defi_1}
c_6\dist^2(x,[\phi\leq\phi^*])\leq\langle\xi,x-x_p\rangle\leq\|\xi\|\cdot\|x-x_p\|.
\end{equation}
Since $x_p=\arg\min_{y\in\setX_{\text{\tiny OP}}^*}\|x-y\|$ and $[\phi\leq\phi^*]=\setX_{\text{\tiny OP}}^*$, it follows from~\eqref{eq:RSI_defi_1} that
\[
c_6\dist(x,[\phi\leq\phi^*])\leq\|\xi\|,\quad\forall\xi\in\partial f(x),
\]
which yields the ($\ell$-SEB) on $[\phi^*\leq\phi<\phi^*+\nu]$. By statement (i) of Proposition~\ref{prop:beb}, the desired result follows.
\end{document}